\newtheorem{theorem}{Theorem}[section]
\newtheorem{prop}{Proposition}[section]
\newtheorem{lemma}{Lemma}[section]
\newtheorem{corollary}{Corollary}[section]
\newtheorem{remark}{Remark}[section]
\newcommand{\D}{\mathcal{D}}
\newcommand{\R}{{\mathbb R}}
\newcommand{\E}{{\mathbb E}}
\newcommand*{\tp}{^\top}
\newcommand*{\ta}{^{(1)}}
\newcommand*{\tb}{^{(2)}}
\newcommand*{\taa}{_{(1)}}
\newcommand*{\tba}{_{(2)}}
\def\RR{\mathbb   R}
\def\EE{\mathbb   E}
\def\NN{\mathbb   N}
\def\PP{\mathbb   P}
\def\1{{\mathbf 1}}
\def\2{{\mathbf 2}}
\DeclareMathOperator{\Tr}{Tr}
\title[Non-asymptotic error bounds for The Multilevel  Monte Carlo Euler method ]{Non-asymptotic error bounds for The Multilevel  Monte Carlo Euler method applied to SDE{s} with constant diffusion coefficient}
\date{\today}
\author{Benjamin Jourdain}
\thanks{This work benefited from the support of the ``chaire Risques financiers'', fondation du risque and  the French National Research Agency
under the program ANR-12-BS01-0019 (STAB)}
\address{Benjamin Jourdain, Universit\'e  Paris-Est, Cermics (ENPC), INRIA, F-77455, Marne-la-Vall\'ee, France }
\email{benjamin.jourdain@enpc.fr}
\author{Ahmed Kebaier}
\thanks{}
\address{Ahmed Kebaier, Universit\'e Paris 13, Sorbonne Paris Cit\'e, LAGA, CNRS, (UMR 7539), F-93430 Villetaneuse, France}
\email{kebaier@math.univ-paris13.fr}
\subjclass[2010]{60H35; 65C30; 65C05; 60H07}
\keywords{Non asymptotic bounds; Euler scheme; Multilevel Monte Carlo methods; Malliavin calculus}
\begin{document}

\maketitle
\begin{abstract}
In this paper, we are interested in deriving non-asymptotic error bounds for the multilevel Monte Carlo method. As a first step, we deal with the explicit Euler discretization of stochastic differential equations with a constant diffusion coefficient. We prove that, as long as the deviation is below an explicit threshold, a Gaussian-type concentration inequality optimal in terms of the variance holds for the multilevel estimator. To do so, we use the Clark-Ocone representation formula and derive bounds for the moment generating functions of the squared difference between a crude Euler scheme and a finer one and of the squared difference of their Malliavin derivatives.
\end{abstract}
\section{introduction}
We are interested in deriving non asymptotic error estimations for the multilevel Monte Carlo estimators introduced by Giles \cite{Gil}. In this paper, as a first step, we deal with estimators of $\E\left[f(X_T)\right]$ where $f:\R^d\to\R$ is Lipschitz continuous with constant $[\dot f]_\infty$,  $T\in(0,+\infty)$ is a deterministic time horizon and $X:=\displaystyle\left(X_t\right)_{\scriptstyle{0\leq t\leq T}}$ is the $\mathbb R^d$-valued solution to the stochastic differential equation with additive noise \begin{equation}\label{1}
dX_{t} = b(X_{t})dt + dW_{t},\;\;\; X_0=x_0\in \mathbb R^d,
\end{equation}
driven by the $d$-dimensional Brownian motion  $\displaystyle W=(W^1,\dots,W^d)$ and with Lipschitz drift function $b:\R^d\to\R^d$ :
$$
\exists C_{b}<+\infty,\;\forall x,y\in\mathbb R^d\quad |b(x)-b(y)|\leq C_{b}|x-y|.\leqno(\mathcal H_{GL})
$$ When $d=1$, this additive noise setting is not restrictive. Indeed any stochastic differential equation $dY_t=\sigma(Y_t)dW_t+\eta(Y_t)dt$ with multiplicative noise given by some function $\sigma :\R\to\R_+^*$ such that $\frac{1}{\sigma}$ is locally integrable can be reduced to \eqref{1} by the Lamperti transformation : for $\varphi(y)=\int_{y_0}^y\frac{dz}{\sigma(z)}$, $X_t=\varphi(Y_t)$ solves \eqref{1} with $b(x)=\left(\frac{\eta}{\sigma}-\frac{\sigma'}{2}\right)(\varphi^{-1}(x))$.

For $n\in{\mathbb N}^*$, we consider the simple Euler-Maruyama approximation $X^n$ with time step $T/n$ and we introduce its continuous version given by
\begin{equation}\label{euler}
dX^n_t=b(X_{\eta_n(t)})dt+dW_t,\;\;\;\eta_n(t)=\left\lfloor \frac{nt}{T}\right\rfloor\frac{T}{n},\;\;X^n_0=x_0.
\end{equation}
When $b$ is smooth, both the strong and the weak errors of this scheme converge to $0$ with order $1$ as $n\to\infty$. According to \cite{Gil}, the complexity for the multilevel Monte Carlo estimator of $\E\left[f(X_T)\right]$ based on this scheme to achieve a root mean square error $\varepsilon$ is ${\mathcal O}(\varepsilon^{-2})$ in the limit $\varepsilon\to 0$, the same as in a standard Monte Carlo method with i.i.d. unbiased samples. For positive integers $m$ and $L$ and $(N_\ell)_{0\le \ell\le L}$, the Multilevel Monte Carlo method approximates the expectation of interest $\EE\left[f( X_T)\right]$  by 
\begin{equation}
\displaystyle\ \hat Q=\frac{1}{N_0}\sum_{k=0}^{N_0}f( X^1_{T,k})+\sum_{\ell=0}^{L}\frac{1}{N_\ell}
\sum_{k=1}^{N_\ell}\left(f( X^{m^{\ell}}_{T,k})-f( X^{m^{\ell-1}}_{T,k})\right).\label{estimult}
\end{equation}
The processes  $((X^{m^{\ell}}_{t,k})_{0\leq t\leq T})_k$ denote independent copies of the Euler scheme with time step $m^{-\ell}T$ for $\ell\in\{0,\cdots,L\}$.
Here, it is important to point out that all these $L+1$ Monte
Carlo estimators have to be based on different, independent samples.  However, for fixed $k$ and $\ell$, 
the simulations $f( X^{m^{\ell}}_{T,k})$ and $f( X^{m^{\ell-1}}_{T,k})$ 
have to be based on the same Brownian path but with different times steps $m^{-\ell}T$ and $m^{-(\ell-1)}T$. 

Our main motivation is the derivation of Gaussian type concentration inequalities for $\hat{Q}-\E[f(X_T)]$, a natural question, which, to our knowledge has not been addressed in the literature. Frikha and Menozzi \cite{FriMen} obtained concentration inequalities for $f(X^n_T)-\E[f(X^n_T)]$. Deriving estimations of the moment generating function of the differences $f(X^{mn}_T)-f(X^{n}_T)-\E\left[f(X^{mn}_T)-f(X^{n}_T)\right]$ which are optimal in terms of their variances is a much more delicate task and adapting their approach seems to be problematic. However, the boundedness of the Malliavin derivatives $\D X^{n}_T$ and $\D X^{mn}_T$ in the additive noise setting permits to follow the approach of Houdr\'e and Privault \cite{HouPri} based on the Clark-Ocone formula and this is one reason why we focus on this setting. Another reason is that for stochastic differential equations with multiplicative noise, more sophisticated schemes, like the Milstein scheme in the commutative case or the Giles and Szpruch  \cite{GilSzp} scheme in the general case, are necessary to improve to two the order one of convergence of the variance of $\left(f( X^{m^{\ell}}_{T})-f( X^{m^{\ell-1}}_{T})\right)^2$ and recover the unbiased Monte Carlo complexity. 

In Section \ref{GF}, when $b$ is $\mathscr C^2$, Lipschitz continuous and the Laplacians of its coordinates have an affine growth, we first derive non-asymptotic estimates of the squared error $\E[(\hat{Q}-\E[f(X_T)])^2]$ of the multilevel Monte Carlo estimator (MLMC) \eqref{estimult} for a  Lipschitz continuous test function $f$ by computing explicit bounds for the bias $\E[f(X^{m^L}_T)-f(X_T)]$ and variance ${\rm Var}[f( X^{m^{\ell}}_{T})-f( X^{m^{\ell-1}}_{T})]$. Then we optimize the parameters $(L,(N_\ell)_{0\le \ell \le L})$ in order to minimize the computation cost needed to achieve a root mean square error smaller than a given precision $\varepsilon$. It turns out that, as $\varepsilon\to 0$, the optimal bias is of order ${\mathcal O}(\varepsilon^{4/3})$, which, to our knowledge, has not been pointed out in the MLMC literature so far. Notice that, for stochastic differential equations with a non constant diffusion coefficient (multiplicative noise), this property remains true for the multilevel Monte Carlo estimator based on the Giles and Szpruch scheme \cite{GilSzp}, since it exhibits the same orders of convergence of the bias and the variance within a given level as \eqref{estimult}.

In Section \ref{so}, we state and derive our main result : as long as the deviation is below an explicit threshold, a Gaussian-type concentration inequality optimal in terms of the variance holds for the multilevel estimator \eqref{estimult}. 
Denoting by $\hat Q_{\varepsilon}$ the multilevel Monte Carlo estimator corresponding to the optimal choice of parameters discussed in Section 2, we obtain the existence of explicit  positive constants $c_1,c_2$ and $c_3$ such that
\begin{equation}
   \forall \varepsilon\in (0,c_1),\;\forall \alpha\in(0,c_2\varepsilon^{2/3}),\;\PP\left(|\hat Q_\varepsilon-\mathbb Ef(X_T)|\ge\alpha\right)\le 2e^{\frac{2}{c_3}}e^{-\frac{\alpha^2}{c_3\varepsilon^2}}.\label{concbound}
\end{equation}
In view of the last factor, this bound is optimal in terms of the precision $\varepsilon$ (up to the value of the multiplicative constant $c_3$). For deviations $\alpha(\varepsilon)$ depending on $\varepsilon$ and such that $\lim_{\varepsilon\to 0}\frac{\alpha(\varepsilon)}{\varepsilon}=\infty$, the right-hand side of \eqref{concbound} converges to $0$ far quicker than the one of the bound $\PP\left(|\hat Q_{\varepsilon}-\mathbb Ef(X_T)|\ge\alpha\right)\le \frac{\varepsilon^2}{\alpha^2}$ consequence of the Markov inequality.
We show in Corollary \ref{cordev2} that the same inequality holds for deviations $\alpha$ up to the order $\ln(1/\varepsilon)^{-1/\beta}$ with $\beta>1$ for a multilevel estimator with increased numbers of simulations in the high levels but with computation cost still of order ${\mathcal O}(\varepsilon^{-2})$ as $\varepsilon\to 0$. 
Moreover, we derive a comparison between the root mean square  error (RMSE) and Orlicz norm for both standard and multilevel Monte Carlo. 
It turns out that compared to standard Monte Carlo, the MLMC estimator achieves the same complexity reduction for Orlicz norm as for the RMSE (see Section \ref{sec:orlicz}). 
The limitation, mentioned  above, on the range of deviations $\alpha$ for which the Gaussian-type concentration inequality holds is related to a corresponding limitation on the range of parameters for which we are able to estimate (optimally in terms of the variance) the moment generating function of $\hat Q-\E f(X_T)$. This comes from the quadratic contributions of the Brownian increments that one obtains when applying Itô's formula twice to exhibit the order of the difference $f(X^{mn}_T)-f(X^n_T)$ for $n\in\{1,m,\hdots,m^{L-1}\}$. Maybe these restrictions could be relaxed when replacing the Brownian increments in the Euler schemes by Rademacher random variables like in the weak MLMC method introduced by Belomestny and Nagapetyan \cite{BeloNaga}. Nonetheless the derivation of concentration bounds for the weak MLMC estimators would require a different approach.
Indeed, we use the Clark-Ocone formula as suggested in Houdr\'e and Privault  \cite{HouPri}, to relate the estimation of the moment generating function of $f(X^{mn}_T)-f(X^{n}_T)-\E\left[f(X^{mn}_T)-f(X^{n}_T)\right]$ for $n\in\{1,m,\hdots,m^{L-1}\}$, to the ones of the squared difference between the crude Euler scheme with $n$ steps and the finer one with $mn$ steps and of the squared difference of their Malliavin derivatives. Such estimations are respectively proved in sections 4 and 5 by using a clever decomposition of the difference between the two schemes. They are combined in Section 6 to estimate the moment generating function of $\hat Q-\E f(X_T)$.
\subsection*{Notations} 
Throughout this paper, we shall use the following  notations. 
\begin{itemize}
\item We denote by $\mathscr C^{\infty}_p(\RR^d,\RR^q)$  the set of all infinitely differentiable functions $g : \RR^d \rightarrow \RR^q$ such that $g$ and all of its partial derivatives have at most polynomial growth.
\item For $n\in\NN^*$, we denote by $\mathscr C^{n}(\RR^d,\RR^q)$  the set of all $n$ times continuously differentiable functions $g : \RR^d \rightarrow \RR^q$.
\item For $g:\RR^d\rightarrow \RR^d$,  we denote by $\nabla g$ the Jacobian matrix defined for all $i,j\in\{1,\hdots,d\}$ and $x\in\R^d$ by $(\nabla g)_{ij}(x)
=\partial_{x_j}g_i(x)$. 
\item For $g:\RR^d\rightarrow \RR^d$,  $\Delta g:\RR^d\rightarrow \RR^d$ denotes the function obtained by applying the Laplacian to each coordinate of $g$.
\item The ceiling function and floor function are denoted respectively by $\lceil\cdot\rceil$ and $\lfloor\cdot\rfloor$ (i.e. for  $x\in\RR$, $\lceil x \rceil$
represents the smallest integer no less than $x$; $\lfloor x\rfloor$  represents the largest integer no greater than
$x$).
\item For $d\in\NN^*$, we denote by $\mathcal M_d$ the set of
real $d$-square matrices with identity matrix $I_d$.  
\item For any matrix sequence $(A_k)_{k\in\NN}\in \mathcal M_d$,  we use the following convention
\begin{equation*}
\prod_{k=n_2}^{n_1}A_k=A_{n_2}\cdots A_{n_1},\; \; \forall  n_1,n_2\in\NN \mbox{ s.t. }  n_1\le n_2.
\end{equation*}
\item The Euclidean  inner product and the associated norm are respectively denoted by  $\cdot$ and $|\cdot|$.
\item For  $M\in\mathcal M_d$, the matrix norm induced by the Euclidean norm $|\cdot|$ is denoted by 
$$\|M\|=\sup_{x\in\R^d:|x|=1}|Mx|.$$
\item For any adapted $\R^d$-valued process $(H_t)_{0\leq t\leq T}$ and $\mathcal M_d$-valued process $(M(t))_{0\leq t\leq T}$, we denote $$|H|:=\left\|\left(\int_0^T|H_t|^2dt\right)^{1/2}\right\|_\infty\quad\mbox{and}\quad |M|:=\left\|\left(\int_0^T\Tr\left[M(t)M(t)\tp\right]dt\right)^{1/2}\right\|_\infty$$
where for $A\in\mathcal M_d$, $A\tp$ and $\Tr[A]$ denote respectively the transpose and the trace of matrix $A$.  
\end{itemize}



\section{Non-asymptotic mean square error of the multilevel Monte Carlo estimator}\label{GF}

\subsection{Assumptions and strong error analysis}
It is well known that under assumption $(\mathcal H_{GL})$ we have 
$$
\forall p\geq 1, \sup_{0 \leq t \leq T}|X_t|,\sup_{0 \leq t \leq T}|X^n_t|\in L^p\mbox{ and } \mathbb{E}\left[{\sup_{0 \leq t \leq T}}\arrowvert X_t -  {X}^n_t\arrowvert^{p} \right] \leq\frac{K_p(T)}{n^{p/2}},\mbox{ with }  K_p(T)<\infty. 
\leqno(\mathcal P)
$$
Moreover, since the diffusion coefficient is constant, the Euler scheme coincides with the Milstein scheme and if $b$ belongs to $\mathscr C^{2}(\RR^d,\RR^d)$ with bounded derivatives, then the strong error estimation improves to $\mathbb{E}\left[{\sup_{0 \leq t \leq T}}\arrowvert X_t -  {X}_t^n\arrowvert^{p} \right] \leq\frac{K_p(T)}{n^{p}},\mbox{ with }  K_p(T)<\infty$ (see for instance \cite{milstein}). In order to get a non-asymptotic control of the bias and the variance of the multilevel Monte Carlo estimator, we are now going to state an explicit bound for the terminal quadratic strong error $\mathbb{E}\left[\arrowvert X^{mn}_{T} -  {X}^n_{T}\arrowvert^2 \right]$ for $(n,m)\in {\mathbb N}^*\times \bar{\mathbb N}$ (with the convention $X^{mn}=X$ for $m=\infty$) under the following assumption. The constancy of the diffusion coefficient ensures that the bias can be estimated with the right order of convergence using this strong error analysis instead of the more complicated weak error analysis.
\subsection*{Assumption (R1)}
The function $b\in \mathscr C^{2}(\RR^d,\RR^d)$ and there exist finite constants  $[\dot b]_{\infty}\in (0,+\infty)$ and $a_{\Delta b}\in [0,+\infty)$ such that
\begin{align}
   &\forall x\in\RR^d,\;\quad\|\nabla b(x)\|\leq [\dot b]_{\infty},\notag\\
&\forall x\in\RR^d,\;|\Delta b(x)|\le 2a_{\Delta b}(1+|x-x_0|).\label{afflapb}
\end{align}
\begin{prop}\label{properfort}
Assume  ${\mathbf (\mathbf R\mathbf 1)}$. Then, for all  $(n,m)\in {\mathbb N}^*\times \bar{\mathbb N}$,
\begin{align*}
   &\E\left[|X^{mn}_{T}-X^n_{T}|^2\right]\le K_{1,m}\frac{(m-1)T^2}{mn^2}\mbox{ and }\E\left[\max_{0\le k\le n}|X^{mn}_{\frac{kT}{n}}-X^n_{\frac{kT}{n}}|^2\right]\le K_{2,m}\frac{(m-1)T^2}{mn^2}\\
&\mbox{where }\sqrt{K_{1,m}}=C_{\eqref{cte:maj}}[\dot b]_\infty  \sqrt{\frac{d(2m-1)}{6m}}+e^{[\dot b]_{\infty}T}\sqrt{\frac{m-1}{m}}\\&\times \bigg(\frac{|b(x_0)|}{2}\bigg(a_{\Delta b}\frac{[\dot b]_\infty T-1+e^{-[\dot b]_\infty T}}{2[\dot b]^2_\infty}+[\dot b]_\infty T\bigg)+a_{\Delta b}\frac{1-e^{-[\dot b]_\infty T}}{[\dot b]_\infty}+\frac{2}{{3}}\sqrt{d}([\dot b]_{\infty}^2+ a_{\Delta b})T^{3/2}\bigg)
\end{align*}
and $K_{2,m}$ is defined like $K_{1,m}$ but with $C_{\eqref{cte:maj}}+\sqrt{T}$ replacing $C_{\eqref{cte:maj}}=\sqrt{T}+[\dot b]_\infty\int_0^Te^{[\dot b]_\infty(T-t)}\sqrt{t} dt$.
\end{prop}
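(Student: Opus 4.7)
Since the diffusion coefficient is constant, the Brownian increments cancel in $Y_t := X^{mn}_t - X^n_t$, so
\[
Y_t = \int_0^t \bigl[b(X^{mn}_{\eta_{mn}(s)}) - b(X^n_{\eta_n(s)})\bigr]\,ds.
\]
A direct use of the global Lipschitz assumption $(\mathcal H_{GL})$ yields only a strong error of order $1/n$; to recover the Milstein-type rate $1/n^2$ in $L^2$, I split the integrand as
\[
b(X^{mn}_{\eta_{mn}(s)}) - b(X^n_{\eta_n(s)}) = \bigl[b(X^{mn}_s)-b(X^n_s)\bigr] + \bigl[b(X^{mn}_{\eta_{mn}(s)}) - b(X^{mn}_s)\bigr] - \bigl[b(X^n_{\eta_n(s)}) - b(X^n_s)\bigr].
\]
The first bracket integrates to a term whose $L^2$-norm is controlled by $[\dot b]_\infty\int_0^t \|Y_s\|_{L^2}\,ds$, ready for a Gr\"onwall closure. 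For each of the last two, I apply It\^o's formula to $b(X^{mn}_\cdot)$ (resp. $b(X^n_\cdot)$) between the preceding grid point and $s$, using $b\in\mathscr C^2$:
\[
b(X^n_s) - b(X^n_{\eta_n(s)}) = \int_{\eta_n(s)}^s \nabla b(X^n_u)\,dW_u + \int_{\eta_n(s)}^s \bigl(\nabla b(X^n_u)\,b(X^n_{\eta_n(u)}) + \tfrac12\Delta b(X^n_u)\bigr)\,du,
\]
and analogously for $X^{mn}$. Integrating in $s$ and invoking the stochastic Fubini theorem recasts these two corrections as a stochastic integral $\int_0^t \Phi(u)\,dW_u$ plus an absolutely continuous drift $\int_0^t \Psi(u)\,du$, with $\Phi$ and $\Psi$ carrying the lag factors $\bigl(s^{mn}_+(u)\wedge t - u\bigr)$ and $\bigl(s^n_+(u)\wedge t - u\bigr)$, where $s^\cdot_+(u)$ denotes the next grid point after $u$.

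The next step is to estimate these two corrections in $L^2$. I organize the martingale so that its leading piece is driven by the \emph{difference of lags} $\eta_{mn}(u) - \eta_n(u)$ (the sub-dominant pieces come from $\nabla b(X^{mn}_u)-\nabla b(X^n_u)$ and from the residual interval of length at most $T/(mn)$); It\^o's isometry, combined with $\|\nabla b\|\le [\dot b]_\infty$, yields a bound involving $d[\dot b]_\infty^2\int_0^t(\eta_{mn}(u)-\eta_n(u))^2\,du$. The elementary identity
\[
\int_0^T (\eta_{mn}(u)-\eta_n(u))^2\,du = \frac{T^3(m-1)(2m-1)}{6\,m^2\,n^2}
\]
then produces the prefactor $[\dot b]_\infty\sqrt{d(2m-1)/(6m)}$, paired with $\sqrt{(m-1)/m}\,T/n$ and a factor $\sqrt t$. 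For the drift correction, I use $|\nabla b(X^n_u)b(X^n_{\eta_n(u)})| \le [\dot b]_\infty(|b(x_0)| + [\dot b]_\infty|X^n_{\eta_n(u)}-x_0|)$ and $|\Delta b(X^{mn}_u)|\le 2a_{\Delta b}(1+|X^{mn}_u-x_0|)$ via \eqref{afflapb}; a Gr\"onwall estimate on $\E[|X^n_u-x_0|]$, exploiting the explicit Euler recursion, isolates the deterministic $|b(x_0)|$ contribution (giving the cluster $|b(x_0)|\bigl(a_{\Delta b}([\dot b]_\infty T-1+e^{-[\dot b]_\infty T})/(2[\dot b]_\infty^2) + [\dot b]_\infty T\bigr)$) from the stochastic Brownian part (giving the $\sqrt d\,T^{3/2}$ and $a_{\Delta b}(1-e^{-[\dot b]_\infty T})/[\dot b]_\infty$ pieces), reproducing the second bracket of $\sqrt{K_{1,m}}$.

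The bound closes via the Gr\"onwall inequality $\|Y_T\|_{L^2} \le a(T) + [\dot b]_\infty\int_0^T e^{[\dot b]_\infty(T-s)}a(s)\,ds$ applied with $a(t) = K_0\sqrt t + K_1$: the $K_0\sqrt t$ part integrates to exactly $K_0\,C_{\eqref{cte:maj}}$, while the constant piece is amplified by $e^{[\dot b]_\infty T}$, producing the two summands of $\sqrt{K_{1,m}}$. For the discrete supremum $\E[\max_{0\le k\le n}|Y_{kT/n}|^2]$, only the treatment of the martingale $M_t=\int_0^t\Phi(u)\,dW_u$ changes: Doob's $L^2$ inequality gives $\|\max_k|M_{kT/n}|\|_{L^2}\le 2\|M_T\|_{L^2}\le 2K_0\sqrt T$, which upgrades the $\sqrt T$ in $C_{\eqref{cte:maj}}$ to $2\sqrt T$, equivalently replacing $C_{\eqref{cte:maj}}$ by $C_{\eqref{cte:maj}}+\sqrt T$. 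The main obstacle is the bookkeeping: isolating the dominant martingale piece driven by $\eta_{mn}(u)-\eta_n(u)$ rather than by the individual lags $(s^\cdot_+(u)-u)$, and matching exactly the prefactors $(2m-1)/(6m)$ and $(m-1)/m$ together with the precise interplay of $|b(x_0)|$, $a_{\Delta b}$ and the powers of $[\dot b]_\infty$ that appear in the drift contribution.
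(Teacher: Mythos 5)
The decomposition you propose is genuinely different from the paper's, and it has a gap that matters under the stated hypothesis $(\mathbf{R1})$. Your split
\[
b(X^{mn}_{\eta_{mn}(s)}) - b(X^n_{\eta_n(s)}) = \bigl[b(X^{mn}_s)-b(X^n_s)\bigr] + \bigl[b(X^{mn}_{\eta_{mn}(s)}) - b(X^{mn}_s)\bigr] - \bigl[b(X^n_{\eta_n(s)}) - b(X^n_s)\bigr]
\]
forces you, after It\^o and stochastic Fubini, to combine two martingales whose integrands carry \emph{different} gradients, namely $(\hat\eta_{mn}(u)\wedge t -u)\nabla b(X^{mn}_u)$ and $(\hat\eta_n(u)\wedge t -u)\nabla b(X^n_u)$. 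Isolating the lag-difference piece leaves behind a cross term of the form $(\hat\eta_n(u)\wedge t -u)\bigl(\nabla b(X^{mn}_u)-\nabla b(X^n_u)\bigr)$, which you declare sub-dominant. Under $(\mathbf{R1})$ this is not justified: $(\mathbf{R1})$ bounds $\|\nabla b\|$ but says nothing about the Lipschitz modulus of $\nabla b$. The only bound available is the crude $\|\nabla b(X^{mn}_u)-\nabla b(X^n_u)\|\le 2[\dot b]_\infty$, and then It\^o isometry gives a contribution to $\E[|Y_T|^2]$ of size $O\bigl(d[\dot b]_\infty^2 T^3/n^2\bigr)$ that does \emph{not} carry the factor $(m-1)/m$ — in particular it does not vanish as $m\to1$, whereas the left-hand side is identically $0$ when $m=1$. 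So the proposed route either loses the sharp $(m-1)/m$ dependence or silently assumes $\nabla b$ globally Lipschitz, which is part of $(\mathbf{R2})$ but not of $(\mathbf{R1})$, and which would inject $[\ddot b]_\infty$ into $K_{1,m}$, unlike the paper's constant.

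The paper circumvents this precisely by \emph{not} comparing the two schemes' individual discretization errors. It works with the discrete recursion $U_{t_{k+1}}=A_kU_{t_k}+\int_{t_k}^{t_{k+1}}\bigl(b(X^{mn}_{\eta_{mn}(s)})-b(X^{mn}_{t_k})\bigr)ds$, where the rank-one matrices $A_k=I_d+\mathbf 1_{\{U_{t_k}\neq0\}}\tfrac{T}{n}\tfrac{(b(X^{mn}_{t_k})-b(X^n_{t_k}))U_{t_k}^\top}{|U_{t_k}|^2}$ absorb the Lipschitz part of $b$ (with $\|A_k-I_d\|\le T[\dot b]_\infty/n$, no second derivative needed), and the closed form $U_{t_k}=\sum_l\mathcal A_{k-1}^{l+1}(A_l-I_d)V_{t_l}+V_{t_k}$ involves only $V_{t_k}=\int_0^{t_k}\bigl(b(X^{mn}_{\eta_{mn}(s)})-b(X^{mn}_{\eta_n(s)})\bigr)ds$ — the \emph{same} process $X^{mn}$ at the two grid refinements — so after It\^o only $\nabla b(X^{mn}_u)$ appears and no gradient differences arise. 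Your lag-squared identity, the Gr\"onwall closure with $a(t)=K_0\sqrt t+K_1$ giving $C_{\eqref{cte:maj}}$, and the Doob upgrade $C_{\eqref{cte:maj}}\rightsquigarrow C_{\eqref{cte:maj}}+\sqrt T$ for the discrete maximum are all sound and match the paper's bookkeeping; the difference-of-lags idea is exactly the right dominant term. But you need the paper's matrix device (or at least its decomposition $b(X^{mn}_{\eta_{mn}})-b(X^{mn}_{\eta_n})$) to avoid the $\nabla b$-difference under $(\mathbf{R1})$ only.
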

In the estimations (and in the remaining of the paper), $m$ only appears through ratios which have a limit as $m\to\infty$ and when $m=\infty$, we consider that they are equal to this limit. The proof is postponed to Section \ref{secu}.
\subsection{MLMC parameters optimization revisited}\label{sec:opti}
In what follows let us assume that $f\in \mathscr C^1(\RR^d,\RR)$ is  a Lipschitz continuous function with constant $[\dot f]_{\infty}$. For the Multilevel Monte Carlo estimator
\begin{equation*}
\displaystyle\ \hat Q=\frac{1}{N_0}\sum_{k=0}^{N_0}f( X^1_{T,k})+\sum_{\ell=1}^{L}\frac{1}{N_\ell}
\sum_{k=1}^{N_\ell}\left(f( X^{m^{\ell}}_{T,k})-f( X^{m^{\ell-1}}_{T,k})\right),
\end{equation*}
defined in \eqref{estimult}, the expectation leads to a telescoping summation so that 
\begin{equation}\label{upperbound:bias}
\left|\E\left[f(X_T)-\hat{Q}\right]\right|=\left|\E\left[f(X_T)-f(X^{m^L}_T)\right]\right|\le [\dot f]_\infty\E^{1/2}\left[\left|X_T-X^{m^L}_T\right|^2\right]\le [\dot f]_\infty \frac{T\sqrt{K_{1,\infty}}}{m^L},
\end{equation}
where we used Proposition \ref{properfort} for the last inequality. On the other hand, again by Proposition \ref{properfort},
$${\rm Var}\left[f( X^{m^{\ell}}_{T})-f( X^{m^{\ell-1}}_{T})\right]\le [\dot f]_\infty^2\EE\left[\left|X^{m^{\ell}}_{T}-X^{m^{\ell-1}}_{T}\right|^2\right]\le [\dot f]_\infty^2\frac{K_{1,m}(m-1)T^2}{m^{2\ell -1}}.$$
Last, as  $X^1_T\sim {\mathcal N}(x_0+b(x_0)T,TI_d)$, we use the logarithmic Sobolev inequality for the Gaussian measure and  the Herbst's argument (see e.g. propositions 5.5.1 and 5.4.1 in \cite{BakGenLed}) to get for all $\lambda\in \RR$
\begin{equation}\label{Co:gau}
\EE \bigg[\exp\bigg(\lambda (f(X^1_T)- \EE[f(X^1_T)])\bigg)\bigg]\leq \exp\left(\frac{\lambda^2[\dot f]^2_\infty T }{2}\right).
\end{equation}
By performing Taylor expansions as $\lambda\to 0$, we easily deduce that
$$
{\rm Var}\left[f(X^1_T)\right]\le [\dot f]^2_\infty T.
$$
as a consequence, the following non-asymptotic estimation of the mean square error of $\hat{Q}$ holds.
\begin{prop}\label{MSE} Under ${\mathbf (\mathbf R\mathbf 1)}$,
$$
   \E\left[\left(\hat{Q}-\E[f(X_T)]\right)^2\right]\le [\dot f]^2_\infty\bigg(\frac{K_{1,\infty}T^2}{m^{2L}}+\frac{T}{N_0}+\sum_{\ell =1}^L\frac{K_{1,m}(m-1)T^2}{N_\ell m^{2\ell -1}}\bigg).
$$
\end{prop}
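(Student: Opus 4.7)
The plan is the standard bias--variance decomposition of the mean square error, followed by substitution of the three ingredients that have just been assembled in the excerpt.

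First, I would write
\begin{equation*}
\E\left[(\hat Q-\E[f(X_T)])^2\right]=\bigl(\E[\hat Q]-\E[f(X_T)]\bigr)^2+{\rm Var}(\hat Q).
\end{equation*}
The telescoping identity $\E[\hat Q]=\E[f(X^{m^L}_T)]$ combined with the Lipschitz property of $f$ and Proposition \ref{properfort} (applied in the limit $m=\infty$, so with constant $K_{1,\infty}$) gives the bias bound already stated in \eqref{upperbound:bias}; squaring yields the first term $[\dot f]^2_\infty K_{1,\infty}T^2/m^{2L}$.

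Second, for the variance, I would invoke independence of the $L+1$ Monte Carlo blocks and, within each block, of the $N_\ell$ i.i.d.\ copies, to obtain
\begin{equation*}
{\rm Var}(\hat Q)=\frac{{\rm Var}[f(X^1_T)]}{N_0}+\sum_{\ell=1}^{L}\frac{{\rm Var}\bigl[f(X^{m^\ell}_T)-f(X^{m^{\ell-1}}_T)\bigr]}{N_\ell}.
\end{equation*}
For the level-$0$ contribution I use the Gaussian concentration bound \eqref{Co:gau}: differentiating the moment generating function inequality twice at $\lambda=0$, or equivalently using the Poincaré inequality implied by the log-Sobolev one, gives ${\rm Var}[f(X^1_T)]\le [\dot f]^2_\infty T$. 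For the level-$\ell$ contribution with $\ell\ge 1$, I dominate the variance by the second moment, apply the Lipschitz property of $f$, and then invoke Proposition \ref{properfort} (with $n=m^{\ell-1}$ so that $mn=m^\ell$) to get
\begin{equation*}
{\rm Var}\bigl[f(X^{m^\ell}_T)-f(X^{m^{\ell-1}}_T)\bigr]\le[\dot f]^2_\infty\,\E\bigl[|X^{m^\ell}_T-X^{m^{\ell-1}}_T|^2\bigr]\le [\dot f]^2_\infty\,\frac{K_{1,m}(m-1)T^2}{m^{2\ell-1}}.
\end{equation*}

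Summing the three contributions factors out $[\dot f]^2_\infty$ and yields exactly the stated inequality. There is essentially no obstacle here: every nontrivial estimate (strong error at terminal time, Gaussian concentration for the first level, and the Lipschitz bound on the bias) has been established just above, so the proof is a clean bookkeeping step that packages them into a single mean-square error bound.
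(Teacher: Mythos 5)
Your proposal is correct and follows exactly the route the paper takes: bias--variance decomposition, the bias bound \eqref{upperbound:bias} from Proposition \ref{properfort} at $m=\infty$, independence of the $L+1$ levels, Proposition \ref{properfort} with $n=m^{\ell-1}$ for the high-level variances, and the Taylor expansion of \eqref{Co:gau} for the level-$0$ variance. The paper leaves this assembly implicit (``as a consequence''), and your write-up supplies precisely the missing bookkeeping.
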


According to the above proposition, to achieve a root mean square error $\varepsilon>0$, one should choose 
\begin{equation}\frac{[\dot f]_\infty T\sqrt{K_{1,\infty}}}{m^{L}}<\varepsilon\mbox{ i.e. }L\ge \lfloor \ln([\dot f]_\infty T\sqrt{K_{1,\infty}}/\varepsilon)/\ln(m)\rfloor +1.\label{minol}
\end{equation} For such a choice, one should then choose $(N_\ell)_{0\le \ell\le L}$ such that \begin{equation}
   \sum_{\ell =1}^L\frac{m+1}{N_\ell m^{2\ell+1}}+\frac{C_{\eqref{varconst}}^2}{N_0}\le \frac{m+1}{K_{1,m}m^2(m-1)}\left(\frac{\varepsilon^2}{[\dot f]^2_\infty T^2}-\frac{K_{1,\infty}}{m^{2L}}\right)\label{varconst}
\end{equation} where $C_{\eqref{varconst}}=\frac{1}{m}\sqrt{\frac{m+1}{K_{1,m}(m-1)T}}$
minimizing the computation cost which is equal to $N_0+\sum_{\ell=1}^LN_\ell(m+1)m^{\ell-1}$. Note that for $\ell\in\{1,\hdots,L\}$, $(m+1)m^{\ell-1}=m^{\ell}+m^{\ell-1}$ is the number of grid values of the Euler schemes which are computed for each Brownian path at the level $\ell$. This constrained minimization problem leads to $N_0=N\frac{C_{\eqref{varconst}}}{C_{\eqref{varconst}}+\sum_{\ell =1}^Lm^{-3\ell/2}}$ and  $N_\ell=N\frac{m^{-3\ell/2}}{C_{\eqref{varconst}}+\sum_{\ell =1}^Lm^{-3\ell/2}}$ where the total number $N$ of simulations is chosen in order to achieve equality in \eqref{varconst} : 
\begin{equation}\label{eq:Nsize}
N=\left(C_{\eqref{varconst}}+\sum_{\ell =1}^Lm^{-3\ell/2}\right)\left(C_{\eqref{varconst}}+\frac{m+1}{m}\times\frac{1-m^{-L/2}}{\sqrt{m}-1}\right)\frac{K_{1,m}m^2(m-1)[\dot f]^2_\infty T^2}{(m+1)(\varepsilon^2-[\dot f]^2_\infty T^2K_{1,\infty}m^{-2L})}.
\end{equation}
Then the computation cost is given by ${\rm Cost}(m,m^{-L})$ where 
$${\rm Cost}(m,x)=\left(C_{\eqref{varconst}}+\frac{m+1}{m}\times\frac{1-\sqrt{x}}{\sqrt{m}-1}\right)^2\frac{K_{1,m}m^2(m-1)[\dot f]^2_\infty T^2}{(m+1)(\varepsilon^2-[\dot f]^2_\infty T^2K_{1,\infty}x^2)}.$$
Notice that for fixed $m$, ${\rm Cost}(m,x)$ is up to some positive multiplicative factor not depending on $x$ equal to $g(x)=\frac{(\sqrt{\alpha}-\sqrt{x})^2}{\beta^2_{\varepsilon}-x^2}$ with $\sqrt{\alpha}=1+\frac{m(\sqrt{m}-1)}{m+1}C_{\eqref{varconst}}>1$ and $\beta_{\varepsilon}=\frac{\varepsilon}{[\dot f]_\infty T\sqrt{K_{1,\infty}}}$ not depending on $x$. We thus want to find $L\in {\mathbb N}$ minimizing $g(m^{-L})$ under the constraint \eqref{minol} which writes $m^{-L}<\beta_{\varepsilon}$. We have
$g'(x)=\frac{\sqrt{\alpha}-\sqrt{x}}{(\beta^2_{\varepsilon}-x^2)^2}\left(2\sqrt{\alpha} x-x^{3/2}-\frac{\beta^2_{\varepsilon}}{\sqrt{x}}\right)$. Since, as $\alpha>1$, $x\mapsto h(x)= 2\sqrt{\alpha} x-x^{3/2}-\frac{\beta^2_{\varepsilon}}{\sqrt{x}}$ is increasing on $(0,1]$, \begin{itemize}
   \item either $2\sqrt{\alpha}-1-\beta^2_{\varepsilon}\le 0$, which implies $\beta_{\varepsilon}>1$ and $\inf_{x\in [0,1]}g(x)=g(1)$ so that $L=0$ solves the constrained minimization problem.
\item or $2\sqrt{\alpha}-1-\beta^2_{\varepsilon}>0$ so that, since $\lim_{x\to 0^+}2\sqrt{\alpha} x-x^{3/2}-\frac{\beta^2_{\varepsilon}}{\sqrt{x}}=-\infty$ and $2\sqrt{\alpha} \beta_{\varepsilon}-\beta^{3/2}_{\varepsilon}-\frac{\beta^2_{\varepsilon}}{\sqrt{\beta_{\varepsilon}}}=2\beta_{\varepsilon}(\sqrt{\alpha}-\sqrt{\beta_{\varepsilon}})>2\beta_{\varepsilon}(1-\sqrt{\beta_{\varepsilon}})$, there exists $x^\star_{\varepsilon}\in (0,1\wedge \beta_{\varepsilon})$ such that $g$ is decreasing on $(0,x^\star_{\varepsilon})$ and increasing on $(x^\star_{\varepsilon},1\wedge \beta_{\varepsilon})$ and the value of $L$ solving the constrained minimization problem belongs to $\{\lfloor -\frac{\ln x^\star_{\varepsilon}}{\ln m}\rfloor,\lceil -\frac{\ln x^\star_{\varepsilon}}{\ln m}\rceil\}$.
\end{itemize}
We denote by $L^\varepsilon$ this optimal value of $L$, by $N^\varepsilon$ (resp. $N^\varepsilon_\ell$) the corresponding total number of samples (resp. number of samples in the level $\ell$) and by $\hat{Q}_\varepsilon$ the multilevel Monte Carlo estimator \eqref{estimult} with those optimal parameters. 
When $\varepsilon^2<[\dot f]^2_\infty T^2K_{1,\infty}\left(1+\frac{2m(\sqrt{m}-1)C_{\eqref{varconst}}}{m+1}\right)$ which is equivalent to $2\sqrt{\alpha}-1-\beta^2_{\varepsilon}>0$, since $h\left(\frac{\beta^{4/3}_{\varepsilon}}{2^{2/3}\alpha^{1/3}}\right)=-\frac{\beta^{2}_{\varepsilon}}{2\sqrt{\alpha}}<0$ and $\inf_{x\in (0,1]}h'(x)\ge 2\sqrt{\alpha}-\frac{3}{2}>0$, we have $\frac{\beta^{4/3}_{\varepsilon}}{2^{2/3}\alpha^{1/3}}<x^\star_{\varepsilon}<\frac{\beta^{4/3}_{\varepsilon}}{2^{2/3}\alpha^{1/3}}+\frac{1}{2\sqrt{\alpha}-3/2}\times\frac{\beta^{2}_{\varepsilon}}{2\sqrt{\alpha}}$. Hence, as  $\varepsilon\to 0$, $x^\star_{\varepsilon}\sim \frac{\beta^{4/3}_{\varepsilon}}{2^{2/3}\alpha^{1/3}}$,  $L^\varepsilon\sim \frac{4\ln(1/\varepsilon)}{3\ln m}$ and the bias term $[\dot f]_\infty T\sqrt{K_{1,\infty}}m^{-L^{\varepsilon}}$ behaves as ${\mathcal O}(\varepsilon^{4/3})$. More precisely, when $\varepsilon<c_1:=[\dot f]_\infty T\sqrt{K_{1,\infty}}\left(1+\frac{2m(\sqrt{m}-1)C_{\eqref{varconst}}}{m+1}\right)^{1/2}$,
 \begin{align}&m^{-L^\varepsilon}\ge e^{-\lceil -\frac{\ln x^\star_{\varepsilon}}{\ln m}\rceil\ln m}>\frac{x^\star_{\varepsilon}}{m}>\frac{\varepsilon^{4/3}}{2^{2/3}m([\dot f]_\infty T\sqrt{K_{1,\infty}})^{4/3}(1+\frac{m(\sqrt{m}-1)}{m+1}C_{\eqref{varconst}})^{2/3}},\label{minobiais}\\
&m^{-L^\varepsilon}\le e^{-\lfloor -\frac{\ln x^\star_{\varepsilon}}{\ln m}\rfloor\ln m}<mx^\star_\varepsilon\notag\\&<\frac{m\varepsilon^{4/3}}{(1+\frac{m(\sqrt{m}-1)}{m+1}C_{\eqref{varconst}})([\dot f]_\infty T\sqrt{K_{1,\infty}})^{4/3}}\left(\frac{(1+\frac{m(\sqrt{m}-1)}{m+1}C_{\eqref{varconst}})^{1/3}}{2^{2/3}}+\frac{(1+\frac{2m(\sqrt{m}-1)}{m+1}C_{\eqref{varconst}})^{1/3}}{1+\frac{4m(\sqrt{m}-1)}{m+1}C_{\eqref{varconst}}}\right)\label{majobiais}\end{align}
We easily deduce that, as expected from \cite{Gil}, ${\rm Cost}(m,m^{-L^\varepsilon})={\mathcal O}(\varepsilon^{-2})$ as $\varepsilon\to 0$ for fixed $m$ and $N^\varepsilon={\mathcal O}(\varepsilon^{-2})$. One could also consider minimizing $m\mapsto {\rm Cost}(m,L^\varepsilon(m))$ numerically, where we used the notation $L^\varepsilon(m)$ to make the dependence on $m$ explicit.
\begin{remark}
For this optimal choice which clearly differs from the one in \cite{Gil}, the bias of the Multilevel Monte Carlo method is not of the same order of magnitude as the precision $\varepsilon$ but much smaller. To the best of our knowledge, such a $4/3$ order of convergence of the bias does not appear in the existing multilevel Monte Carlo methods literature. Notice that, for stochastic differential equations with a non constant diffusion coefficient (multiplicative noise), this property remains true for the multilevel Monte Carlo estimator based on the Giles and Szpruch scheme \cite{GilSzp}, since it exhibits the same orders of convergence of the bias and the variance within a given level as \eqref{estimult}.
\end{remark}

\section{Concentration bounds for the Multilevel Monte Carlo Euler method}\label{so}
The main result of this paper is a concentration inequality for the Multilevel Monte Carlo estimator $\hat{Q}$ defined in \eqref{estimult}. To prove this result, we are going to estimate the moment generating function of $$\hat{Q}-\EE[f( X^{m^{L}}_{T})]=\sum_{\ell=0}^L\hat{Q}_{\ell},$$
where $\hat Q_{0}:= \frac{1}{N_0}\sum_{k=1}^{N_0}f( X^1_{T,k})-\mathbb Ef(X^1_T) $ and, for $\ell\ge 1$, 
$$\hat Q_{\ell}:=\frac{1}{N_\ell}
\sum_{k=1}^{N_\ell}\left(f( X^{m^{\ell}}_{T,k})-f( X^{m^{\ell-1}}_{T,k})-\EE[f( X^{m^{\ell}}_{T})-
f( X^{m^{\ell-1}}_{T})]\right).$$
\subsection{Estimation of the moment generating function}\label{subsec:momgen}
We are first going to derive an exponential type upper bound with the optimal rate of convergence for  the moment generating function of the square of the error $U_T=X^n_T-X^{mn}_T$ between the Euler schemes with $n$ and $mn$ steps. The proof of the following result is postponed to Section \ref{secu}.
\begin{theorem}\label{laplace1}
Let $(n,m)\in {\mathbb N}^*\times\bar{\mathbb N}$, $t_k=\frac{kT}{n}$ for $k\in\{0,\hdots,n\}$  and $\rho$  be a constant satisfying 
\begin{equation}\label{rho:lap1}
0\leq  \rho \le \frac{9mn^2}{4T^2(m-1)\left(C_\eqref{cte:maj} [\dot b]_\infty\sqrt{3d(2m-1)/m}+C_\eqref{maju2}T^{3/2}\sqrt{2(m-1)/m}\right)^2}:=
\rho_{\eqref{rho:lap1}}n^2
\end{equation}
where $C_{\eqref{cte:maj}}=\sqrt{T}+[\dot b]_\infty\int_0^Te^{[\dot b]_\infty(T-t)}\sqrt{t} dt$ with $C_{\eqref{maju2}}=e^{T[\dot b]_\infty}([\dot b]^2_\infty+a_{\Delta b})$ and by convention $\frac{m}{m-1}=\frac{m-1}{m}=1$ and $\frac{2m-1}{m}=2$ when $m=\infty$.
Under assumption ${\mathbf (\mathbf R\mathbf 1)}$, we have for all $x\ge 0$\begin{align}\label{majufin}
   &\EE\left[  \exp\left\{ \rho \bigg(\frac{(m-1)T^2x}{2mn}+\max_{1\le k\le n}|X^{mn}_{t_k}-X^n_{t_k}|\bigg)^2 \right\}\right]\le \exp\left\{\rho C_{\eqref{majufin}}(x)\frac{(m-1)T^2}{mn^2}\right\}\mbox{ with }\\
&C_{\eqref{majufin}}(x)=\left(C_\eqref{cte:maj} [\dot b]_\infty\sqrt{\frac{3d(2m-1)}{m}}+C_\eqref{maju2}T^{3/2}\sqrt{\frac{2(m-1)}{m}}\right)\bigg(C_\eqref{cte:maj} [\dot b]_\infty\frac{2\ln 2}{3\sqrt{3}}\sqrt{\frac{d(2m-1)}{m}}\notag\\&+C_\eqref{maju2}\sqrt{\frac{(m-1)T}{2m}}\bigg(\frac{(3d+1)(|b(x_0)|+[\dot b]_\infty+\tilde x)^2}{4[\dot b]^2_\infty}+\frac{4d\sqrt{T}(|b(x_0)|+[\dot b]_\infty+\tilde x)}{3\sqrt{\pi}[\dot b]_\infty}+\frac{4dT\ln 2}{9}\bigg)\bigg),\notag
\end{align}
and $\tilde x=\frac{[\dot b]_\infty e^{-[\dot b]_\infty T}}{([\dot b]^2_\infty+a_{\Delta b})}x$.
\end{theorem}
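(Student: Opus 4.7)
The plan is to obtain, pathwise, a decomposition of $\max_{1\le k\le n}|X^{mn}_{t_k}-X^n_{t_k}|$ into a deterministic piece (that will absorb the shift $\frac{(m-1)T^2 x}{2mn}$) and a Brownian martingale whose Malliavin derivative is uniformly controlled; the Clark--Ocone route of Houdr\'e and Privault \cite{HouPri} then converts this into the desired exponential bound on the squared moment.

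First I would carry out the ``clever decomposition'' of $U_t:=X^{mn}_t-X^n_t$ alluded to in the introduction. Subtracting the two Euler schemes gives
$$U_t=\int_0^t\!\bigl(b(X^{mn}_{\eta_{mn}(s)})-b(X^{mn}_{\eta_n(s)})\bigr)ds+\int_0^t\!\bigl(b(X^{mn}_{\eta_n(s)})-b(X^n_{\eta_n(s)})\bigr)ds.$$
The second integrand is Lipschitz in $U_{\eta_n(s)}$, while the first is handled by a second-order Taylor expansion around $X^{mn}_{\eta_n(s)}$ whose remainder is controlled by the affine growth \eqref{afflapb} of $\Delta b$; this produces a Brownian stochastic integral of magnitude $\sqrt{(m-1)T^2/(mn^2)}$ plus a drift of comparable size. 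A discrete Gr\"onwall iteration on the coarse grid $\{t_k\}$, driven by $\|\nabla b\|\le[\dot b]_\infty$, then yields a pathwise inequality $\max_{1\le k\le n}|U_{t_k}|\le A+Z$, where $A$ is deterministic (explicit in $|b(x_0)|$, $[\dot b]_\infty$, $a_{\Delta b}$, $T$, $d$, $m$, $n$) and $Z$ is the supremum of a Brownian martingale. The deterministic part $A$ is arranged so that shifting by $\frac{(m-1)T^2 x}{2mn}$ corresponds to replacing the input $|b(x_0)|+[\dot b]_\infty$ by $|b(x_0)|+[\dot b]_\infty+\tilde x$ in the final constant, which explains the appearance of $\tilde x=\frac{[\dot b]_\infty e^{-[\dot b]_\infty T}}{[\dot b]^2_\infty+a_{\Delta b}}\,x$ inside $C_{\eqref{majufin}}(x)$.

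Next I would control the Malliavin derivative of $Z$. Since the diffusion coefficient is constant, $D_r X^{mn}_t$ and $D_r X^n_t$ satisfy linear recursions driven by $\nabla b$ with deterministic initial jumps, so differencing and iterating the Step~1 decomposition in the Malliavin picture delivers a uniform pathwise bound on $\bigl(\int_0^T|D_s Z|^2\,ds\bigr)^{1/2}$ of order $\sqrt{(m-1)T/(mn^2)}$ times a constant polynomial in $[\dot b]_\infty$, $a_{\Delta b}$, $|b(x_0)|$, and $\tilde x$. This is what Section~5 of the paper is dedicated to establishing and is the key quantitative input.

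Finally, I would assemble these into a Clark--Ocone concentration estimate. Setting $a=\frac{(m-1)T^2 x}{2mn}$ and $F=a+A+Z$, one has $(a+\max_k|U_{t_k}|)^2\le F^2$ pathwise, and $F-\E F$ admits the Clark--Ocone representation $\int_0^T \E[D_s Z\mid\mathcal F_s]\,dW_s$ whose integrand is $|\cdot|$-bounded by Step~2. Using the Gaussian identity $e^{\rho z^2}=\E_G[e^{\sqrt{2\rho}\,zG}]$ with $G\sim\mathcal N(0,1)$ independent of $W$, and then Herbst's exponential martingale argument as in \cite{HouPri}, produces $\E[e^{\rho F^2}]\le\exp\{\rho\,C_{\eqref{majufin}}(x)(m-1)T^2/(mn^2)\}$ precisely when $2\rho$ times the squared Malliavin bound stays below $1$, which is exactly the constraint $\rho\le\rho_{\eqref{rho:lap1}}n^2$. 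The main obstacle will be the bookkeeping required to certify that, once the square is expanded, the cross term (deterministic $\times$ martingale) collapses into a single factor of the form appearing in $C_{\eqref{majufin}}(x)$ without shrinking the admissible range of $\rho$; this is what dictates the specific linear combination inside the square in the statement.
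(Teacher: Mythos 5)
Your plan diverges from the paper's proof in a way that contains a genuine gap. The central misstep is in Step~1: after decomposing $U_t=X^{mn}_t-X^n_t$, you claim the non-martingale piece $A$ is deterministic. It is not. The drift contribution is
\[
U\tba^\star=\int_0^{T}e^{[\dot b]_\infty(T-t)}\gamma(t)\Bigl|\nabla b(X^{mn}_t)b(X^{mn}_{\eta_{mn}(t)})+\tfrac12\Delta b(X^{mn}_t)\Bigr|\,dt,
\]
which is random: by Lemma~\ref{momXn} it is controlled by an integral of $\sup_{s\le t}|W_s|$. Once $A$ is random and unbounded, the plan to absorb $\frac{(m-1)T^2x}{2mn}+A$ into a single deterministic $\delta$ and then exponentiate $(\delta+Z)^2$ no longer applies; and a Gaussian-type Clark--Ocone/Herbst bound cannot be obtained for the square of an unbounded random variable --- one necessarily lands in the chi-square regime, which is why the paper proves and uses Lemma~\ref{Laplace_G_square} (Dambis--Dubins--Schwarz plus a direct Gaussian integral giving $1/\sqrt{1-4\mu|H|^2}$ and the $\exp\{4\mu(\delta^2+\cdots)\}$ bound for $(\delta+\sup|W^i_s|)^2$) rather than Clark--Ocone for Theorem~\ref{laplace1}.

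Two further issues. First, Theorem~\ref{laplace1} is stated under $(\mathbf{R1})$ only; your Step~2 would control $\D_s Z$ via the Malliavin derivative of $\nabla b(X^{mn}_\cdot)$, which requires $b\in\mathscr C^3$, i.e.\ $(\mathbf{R2})$ --- the extra regularity that the paper invokes only in Section~5 (Theorem~\ref{laplace2}). Malliavin calculus and Clark--Ocone are used by the paper to estimate the MGF of $f(X^{mn}_T)-f(X^n_T)$ in Section~6, not to prove the present lemma. Second, even if one grants the Gaussian linearization $e^{\rho z^2}=\E_G[e^{\sqrt{2\rho}\,zG}]$, the two random sources (the max of the stochastic integral $U\taa^\star$ and the drift $U\tba^\star$) must be separated by a H\"older split with a free parameter $q\in(0,1)$, and the admissible range for $\rho$ and the final constant both come from optimizing $q$ so that the two resulting constraints on $\rho$ coincide. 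Your proposal does not perform this split, so it cannot explain either the threshold $\rho_{\eqref{rho:lap1}}n^2$ (which is the harmonic combination of the two individual thresholds) or the two-bracket product structure of $C_{\eqref{majufin}}(x)$. The correct route is: write $U^\star_T\le U\taa^\star+U\tba^\star$ via the matrix decomposition~\eqref{formu}, absorb the shift only into the drift part (which replaces $|b(x_0)|+[\dot b]_\infty$ by $|b(x_0)|+[\dot b]_\infty+\tilde x$ after normalizing by $e^{T[\dot b]_\infty}([\dot b]^2_\infty+a_{\Delta b})$), apply Jensen over discrete and continuous probability weights, and invoke Lemma~\ref{Laplace_G_square} for each factor.
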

To derive our main result, we need to reinforce our assumption ${\mathbf (\mathbf R\mathbf 1)}$ since our approach relies on Malliavin calculus that requires additional smoothness on the coefficient $b$.

\subsection*{Assumption (R2)}  The function $b\in \mathscr C^{3}(\RR^d,\RR^d)$ and satisfies assumption  ${\mathbf (\mathbf R\mathbf 1)}$. Moreover, there exist finite constants  $[\ddot b]_{\infty}\in (0,+\infty)$ and $a_{\nabla\Delta b}\in [0,+\infty)$ such that
\begin{align}
&\forall j\in\{1,\hdots,d\},\;\forall x\in \RR^d,  \;\left\|\frac{\partial\nabla b }{\partial x_j} (x)\right\|\leq [\ddot b]_\infty\notag\\
&\forall x\in \RR^d,  \;\left\|\nabla \Delta b (x)\right\|\leq 2a_{\nabla\Delta b}(1+|x-x_0|).\label{affgradlapb}
\end{align}
To state our next results we introduce the following finite quantities.
\subsection*{Constants Notations (CN)}  
\begin{align*}
 \rho_{\eqref{DUB:3}}&:=  \frac{m^2}{2C^2_{\eqref{majDu3}} T^2(m-1)^2},
\quad\rho_{\eqref{DUB:2}}:=\frac{3n^2}{4 T^2 d[\ddot b]^2_\infty (2m-1)(m-1) } ,\\ C_{\eqref{majDu3}}&:=
(\sqrt{d} [\dot b]_\infty[\ddot b]_\infty\vee[\dot b]^2_\infty +a_{\nabla\Delta b} 
),\quad\Phi_1(r):=\frac{\sqrt{d}[\ddot b]_\infty }{[\dot b]_\infty }(e^{[\dot b]_\infty (T- r)} -1),\\
\Phi_2(r)&:=\int_{r}^Te^{[\dot b]_\infty s}\sqrt{s}ds\,\mbox{ and } \,\Phi_3(r):=\sqrt{\frac{1-e^{-2[\dot b]_\infty(T-r)}}{2[\dot b]_\infty}}+\sqrt{\frac{[\dot b]_\infty}{2}}\int_r^T\sqrt{1-e^{-2[\dot b]_\infty(t-r)}}dt,\\
\hat\rho(r)&:=\frac{\frac{{\rho_{\eqref{rho:lap1}}}}{\Phi^2_1(r)}\frac{{\rho_{\eqref{DUB:3}}}}{\Phi^2_2(r)}\frac{{\rho_{\eqref{DUB:2}}}}{\Phi^2_3(r)}}
{\left(\frac{\sqrt{\rho_{\eqref{rho:lap1}}}}{\Phi_1(r)}{\frac{\sqrt{\rho_{\eqref{DUB:3}}}}{\Phi_2(r)}+\frac{\sqrt{\rho_{\eqref{rho:lap1}}}}{\Phi_1(r)}\frac{\sqrt{\rho_{\eqref{DUB:2}}}}{\Phi_3(r)}}+ {\frac{\sqrt{\rho_{\eqref{DUB:3}}}}{\Phi_2(r)}\frac{\sqrt{\rho_{\eqref{DUB:2}}}}{\Phi_3(r)}}\right)^2}, \\
\Phi(r,x)&:= \bigg({ \frac{\sqrt{\rho_{\eqref{rho:lap1}}}}{\Phi_1(r)}{\frac{\sqrt{\rho_{\eqref{DUB:3}}}}{\Phi_2(r)}+\frac{\sqrt{\rho_{\eqref{rho:lap1}}}}{\Phi_1(r)}\frac{\sqrt{\rho_{\eqref{DUB:2}}}}{\Phi_3(r)}}+ {\frac{\sqrt{\rho_{\eqref{DUB:3}}}}{\Phi_2(r)}\frac{\sqrt{\rho_{\eqref{DUB:2}}}}{\Phi_3(r)}}}\bigg)\\&\times\bigg(\frac{\Phi^2_3(r) C_\eqref{DUB:2}}{\frac{\sqrt{\rho_{\eqref{rho:lap1}}}}{\Phi_1(r)} \frac{\sqrt{\rho_{\eqref{DUB:3}}}}{\Phi_2(r)}}+\frac{\phi_2(r,x)}{\frac{\sqrt{\rho_{\eqref{rho:lap1}}}}{\Phi_1(r)} \frac{\sqrt{\rho_{\eqref{DUB:2}}}}{\Phi_3(r)}}+
\frac{\Phi^2_1(r)C_{\eqref{majufin}}(0)}{{\frac{\sqrt{\rho_{\eqref{DUB:3}}}}{\Phi_2(r)}\frac{\sqrt{\rho_{\eqref{DUB:2}}}}{\Phi_3(r)}}}\bigg), \;\; x\ge0,
\\
\phi_2(r,x)&:=\frac{(m-1)}{m}\bigg( (3d+1)\left(C_{\eqref{majDu3}}\frac{|b(x_0)|+[\dot b]_\infty}{2[\dot b]^2_\infty}(e^{[\dot b]_\infty T}-e^{[\dot b]_\infty r})+x\right)^2\\&+
\frac{4dC_{\eqref{majDu3}}\Phi_2(r)}{\sqrt{\pi}}\left(C_{\eqref{majDu3}}\frac{|b(x_0)|+[\dot b]_\infty}{2[\dot b]^2_\infty}(e^{[\dot b]_\infty T}-e^{[\dot b]_\infty r})+x\right)+d\ln 2C^2_{\eqref{majDu3}}\Phi^2_2(r)\bigg), x\ge0,\\
 C_{\eqref{majofin}}:=&d T\int_0^T\frac{e^{2[\dot b]_{\infty}(T-t) }}{\hat\rho(t)}\left(\sqrt{\rho_{\eqref{rho:lap1}}}+\frac{[\dot f]_{_{\rm{lip}}}}{[\dot f]_{\infty}}\sqrt{\hat\rho(t)}\right)^2dt\\\times&\sup_{r\in [0,T)}\frac{[\dot f]_{_{\rm{lip}}}\sqrt{\hat\rho(r)}C_{\eqref{majufin}}({2[\dot b]_{\infty}[\dot f]_{\infty}}/{T[\dot f]_{\rm{lip}}})+[\dot f]_{\infty}\hat\rho(r)\Phi(r,0)/\sqrt{\rho_{\eqref{rho:lap1}}}}{[\dot f]_{\infty}\sqrt{\rho_{\eqref{rho:lap1}}}+[\dot f]_{_{\rm{lip}}}\sqrt{\hat\rho(r)}}.
\end{align*}
The proof of the following theorem is postponed to Section \ref{sec:6}.
\begin{theorem}\label{thm:Orlicz}
Let assumption ${\mathbf (\mathbf R\mathbf 2)}$ hold and   
 $f\in \mathscr C^1(\RR^d,\RR)$ be a Lipschitz continuous function with constant $[\dot f]_{\infty}$ and such that $\nabla f$ is also Lipchitz with constant $[\dot f]_{\rm lip}$.
For all $\lambda\le {\mathcal C}\min_{1\le\ell\le L}N_\ell m^{\ell}$, where $${\mathcal C}=\left(\frac{\rho_{\eqref{rho:lap1}}}{2dm^2T\int_0^T\frac{e^{2[\dot b]_{\infty}(T-t) }}{\hat\rho(t)}\left([\dot f]_{\infty}\sqrt{\rho_{\eqref{rho:lap1}}}+[\dot f]_{_{\rm{lip}}}\sqrt{\hat\rho(t)}\right)^2dt}\right)^{1/2},$$
we have
\begin{align*}
   \EE\left[\exp\left(\lambda[\hat Q-\mathbb Ef(X^{m^L}_T)]\right)\right]\le \exp\left\{ \lambda^2[\dot f]^2_{\infty}\left( \frac{T}{2N_0}+\sum_{\ell=1}^L\frac{C_{\eqref{majofin}}(m-1)T^2}{N_\ell m^{2\ell-1}}\right)\right\}.
\end{align*}
\end{theorem}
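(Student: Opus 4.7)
To bound the moment generating function, I will factor it over independent levels and samples, handle the coarsest level $\ell=0$ directly via the Gaussian log-Sobolev bound \eqref{Co:gau}, and for each $\ell\ge 1$ combine the Clark--Ocone formula with a three-fold H\"older inequality tuned to the three exponential moment estimates at rates $\rho_{\eqref{rho:lap1}}$, $\rho_{\eqref{DUB:3}}$ and $\rho_{\eqref{DUB:2}}$.

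\paragraph{Reduction to a per-sample moment generating function.} By independence of the $\hat Q_\ell$ and of the samples within each level,
$$\mathbb{E}\left[\exp\left(\lambda[\hat{Q}-\mathbb{E}f(X^{m^L}_T)]\right)\right] = \prod_{\ell=0}^L \Bigl(\mathbb{E}\left[\exp\left((\lambda/N_\ell) G_\ell\right)\right]\Bigr)^{N_\ell},$$
where $G_0=f(X^1_T)-\mathbb{E}f(X^1_T)$ and, for $\ell\ge 1$, $G_\ell=f(X^{m^\ell}_T)-f(X^{m^{\ell-1}}_T)-\mathbb{E}[f(X^{m^\ell}_T)-f(X^{m^{\ell-1}}_T)]$. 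The $\ell=0$ factor produces $\exp(\lambda^2[\dot f]^2_\infty T/(2N_0))$ directly from \eqref{Co:gau}. For $\ell\ge 1$, setting $n=m^{\ell-1}$ and $\mu=\lambda/N_\ell$, Clark--Ocone yields the martingale representation $G_\ell = \int_0^T M_s\cdot dW_s$ with $M_s=\mathbb{E}[D_s(f(X^{mn}_T)-f(X^n_T))\mid\mathcal F_s]$. The usual Dol\'eans exponential trick combined with Cauchy--Schwarz reduces the task to proving
$$\mathbb{E}\bigl[\exp\bigl(\mu^2 \textstyle\int_0^T |M_s|^2\,ds\bigr)\bigr] \le \exp\bigl(2\mu^2[\dot f]^2_\infty C_{\eqref{majofin}}(m-1)T^2/m^{2\ell-1}\bigr),$$
after which raising to the $N_\ell$-th power delivers the announced level-$\ell$ contribution.

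\paragraph{Three-fold H\"older.} Expanding
$$D_s(f(X^{mn}_T)-f(X^n_T)) = \bigl[\nabla f(X^{mn}_T)-\nabla f(X^n_T)\bigr]D_sX^{mn}_T + \nabla f(X^n_T)\bigl(D_sX^{mn}_T - D_sX^n_T\bigr)$$
and using the Lipschitz bounds on $f$ and $\nabla f$ together with the deterministic bound $\|D_sX^{mn}_T\|\le e^{[\dot b]_\infty(T-s)}$, then splitting $D_sX^{mn}_T-D_sX^n_T$ via It\^o into its $[\ddot b]_\infty$-part and its $a_{\nabla\Delta b}$-part, one obtains (after $\mathbb{E}[\cdot\mid\mathcal F_s]$ and Jensen) a pointwise bound of the form $|M_s|\le \Phi_1(s)A_1+\Phi_2(s)A_2+\Phi_3(s)A_3$ with the $A_i$ independent of $s$ and each admitting an exponential moment control at rate $\rho_i\in\{\rho_{\eqref{rho:lap1}},\rho_{\eqref{DUB:3}},\rho_{\eqref{DUB:2}}\}$ (the $A_1$ estimate coming from Theorem~\ref{laplace1} and the other two from the estimates proved in Sections~4--5). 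For any weights $p_i(s)>0$ with $\sum_i p_i(s)=1$ one has $|M_s|^2\le\sum_i \Phi_i^2(s)A_i^2/p_i(s)$, so integrating in $s$ and applying H\"older with $q_i>0$, $\sum_i 1/q_i=1$, bounds the required exponential moment by $\prod_i \mathbb{E}\bigl[\exp\bigl(\mu^2 q_i A_i^2\int_0^T\Phi_i^2(s)/p_i(s)\,ds\bigr)\bigr]^{1/q_i}$. Optimising $(p_i(s),q_i)$ so that each exponent saturates $\rho_i$ selects $q_i\propto\sqrt{\rho_i}/\Phi_i(s)$, producing the effective rate $1/\sqrt{\hat\rho(s)} = \sum_i\Phi_i(s)/\sqrt{\rho_i}$ which is precisely the definition of $\hat\rho$ in (CN); the resulting multiplicative constants combine into $\Phi(s,0)$ and, after taking the supremum in $s$, into $C_{\eqref{majofin}}$. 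The admissibility condition $\mu^2 q_i\int_0^T\Phi_i^2(s)/p_i(s)\,ds\le\rho_i$ for $i=1,2,3$ is exactly what the range restriction $\lambda\le\mathcal C\min_\ell N_\ell m^\ell$ encodes.

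\paragraph{Main obstacle.} The delicate step is the three-fold H\"older optimisation: the natural weights $p_i(s)$ depend on $s$ (because the $\Phi_i$ do), but the H\"older exponents $q_i$ must not, so the compatibility between the three rates has to be preserved uniformly in $s$, which forces the supremum in $s$ appearing in $C_{\eqref{majofin}}$. This is exactly why $\hat\rho(s)$, $\Phi(s,x)$ and $C_{\eqref{majofin}}$ in (CN) take their particular forms, and the bookkeeping needed to match the stated bound is where essentially all the work lies.
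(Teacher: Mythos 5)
Your overall skeleton is right: factor the moment generating function over independent levels and samples, dispatch the level $\ell=0$ with the Gaussian log-Sobolev bound \eqref{Co:gau}, and for $\ell\ge 1$ use Clark--Ocone plus the Dol\'eans-exponential/Cauchy--Schwarz trick (with $p=1/2$) to reduce to bounding $\EE\bigl[\exp\bigl(2\tilde\lambda^2\int_0^T|\EE[K_r\,|\,\mathcal F_r]|^2\,dr\bigr)\bigr]$. That much matches the paper exactly.

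The gap is in the core H\"older step. You propose a pointwise three-term decomposition $|M_s|\le\Phi_1(s)A_1+\Phi_2(s)A_2+\Phi_3(s)A_3$ \emph{with the $A_i$ independent of $s$}, which lets you pull $A_i^2$ outside $\int_0^T\cdot\,ds$ and apply a single three-way H\"older with constant exponents $q_i$. But this premise fails: only the first term ($|U_T|$, coming from the $\nabla f$-Lipschitz part of the chain rule) is $s$-independent, whereas the entire Malliavin-derivative contribution $\D_s X^{mn}_T-\D_s X^n_T$ depends genuinely on $s$, and so do all of its pieces $\D_s U^{(1)}_T,\D_s U^{(2)}_T,\D_s U^{(3)}_T$. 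One cannot factor its contribution as $\Phi_i(s)\times(\text{an }s\text{-free }A_i)$, so the quantity $\prod_i \EE\bigl[\exp\bigl(\mu^2 q_iA_i^2\int_0^T\Phi_i^2/p_i\bigr)\bigr]^{1/q_i}$ you write down is not reachable from your premises.

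The paper handles this by a \emph{nested} H\"older rather than a flat three-way one. First, two applications of Jensen move $\int_0^T\cdot\,p(r)\,dr$ outside the expectation, so one must bound $\EE[\exp\{c(r)|K_r|^2\}]$ for each fixed $r$. At each $r$ one does a \emph{two}-way H\"older with an $r$-dependent exponent $\kappa(r)$, splitting $|K_r|^2$ into the $|U_T|$-branch (controlled by Theorem \ref{laplace1} at rate $\rho_{\eqref{rho:lap1}}$) and the $\D_r^j U^{(1-3)}_T$-branch (controlled at rate $\hat\rho(r)$). The three-way H\"older among $\rho_{\eqref{rho:lap1}},\rho_{\eqref{DUB:3}},\rho_{\eqref{DUB:2}}$ does appear, but it lives \emph{inside} the proof of Theorem \ref{laplace2}, is carried out \emph{per} $r$ with $r$-dependent exponents $q,\bar q$, and that is exactly what produces the $r$-dependent rate $\hat\rho(r)$ and constant $\Phi(r,0)$. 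The choices $\kappa(r)$ and $p(r)$ in the outer step are then tuned so that the admissibility constraint on $\tilde\lambda$ becomes $r$-free, giving the constant $\mathcal C$ of the theorem, and the final bound is obtained as $\int_0^T e^{\,\text{stuff}(r)}\,p(r)\,dr\le e^{\,\sup_r\text{stuff}(r)}$, which is where the supremum in $C_{\eqref{majofin}}$ actually comes from. You also miss one trick that matters for the constants: the deterministic part $\D_r^j U^{(0)}_T$ of the Malliavin-derivative difference is \emph{reallocated} to the $|U_T|$-branch by the substitution $\frac{(m-1)T^2x}{2mn}+|U_T|$ with $x=\frac{2[\dot b]_\infty[\dot f]_\infty}{T[\dot f]_{\rm lip}}$; this is why the paper invokes Theorem \ref{laplace1} with nonzero argument (giving $C_{\eqref{majufin}}(2[\dot b]_\infty[\dot f]_\infty/T[\dot f]_{\rm lip})$ rather than $C_{\eqref{majufin}}(0)$) and the $\D^{(1-3)}$-branch with $\Phi(r,0)$ rather than $\Phi(r,[\dot b]_\infty)$.
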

According to Section \ref{GF}, the bias satisfies 
$$
|\mathbb E f(X^{m^L}_T)-\mathbb Ef(X_T)|\le \mathbb E^{1/2} |f(X^{m^L}_T)- f(X_T)|^2\le \frac{[\dot f]_{\infty}T\sqrt{K_{1,\infty}}}{m^L},
$$
so we easily deduce the following corollary.
\begin{corollary}\label{thm:mgf}
Under assumptions of Theorem \ref{thm:Orlicz}, we have, 
for all $|\lambda|\le {\mathcal C}\min_{1\le\ell\le L}N_\ell m^{\ell}$,
\begin{align*}
   \EE\left[\exp\left(\lambda[\hat Q-\mathbb Ef(X_T)]\right)\right]\le \exp\left\{ \lambda^2[\dot f]^2_{\infty}\left( \frac{T}{2N_0}+\sum_{\ell=1}^L\frac{C_{\eqref{majofin}}(m-1)T^2}{N_\ell m^{2\ell-1}}\right)+|\lambda| \frac{[\dot f]_{\infty}T\sqrt{K_{1,\infty}}}{m^L}\right\}.
\end{align*}
\end{corollary}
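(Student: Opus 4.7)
The strategy is to insert the intermediate quantity $\mathbb Ef(X^{m^L}_T)$ between $\hat Q$ and $\mathbb Ef(X_T)$ and treat the resulting deterministic bias separately. Concretely, I would write
$$\hat Q - \mathbb Ef(X_T) = \bigl(\hat Q - \mathbb Ef(X^{m^L}_T)\bigr) + b_L, \qquad b_L := \mathbb Ef(X^{m^L}_T) - \mathbb Ef(X_T),$$
so that, $b_L$ being a constant, the moment generating function factorizes:
$$\mathbb E\bigl[\exp\bigl(\lambda[\hat Q - \mathbb Ef(X_T)]\bigr)\bigr] = e^{\lambda b_L}\,\mathbb E\bigl[\exp\bigl(\lambda[\hat Q - \mathbb Ef(X^{m^L}_T)]\bigr)\bigr].$$

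For the second factor I invoke Theorem \ref{thm:Orlicz}. Its hypothesis is the one-sided constraint $\lambda\le \mathcal C\min_{1\le\ell\le L}N_\ell m^\ell$; since the corollary assumes $|\lambda|\le\mathcal C\min_{1\le\ell\le L}N_\ell m^\ell$, this is automatically satisfied for both signs of $\lambda$, and the right-hand side of Theorem \ref{thm:Orlicz} is already symmetric in $\lambda$ (it depends only on $\lambda^2$). For the $e^{\lambda b_L}$ factor I use the trivial bound $\lambda b_L \le |\lambda|\,|b_L|$ and estimate $|b_L|$ exactly as in \eqref{upperbound:bias}: by the Lipschitz property of $f$, Jensen's inequality and Proposition \ref{properfort} applied with $m=\infty$,
$$|b_L| \le [\dot f]_\infty\,\mathbb E^{1/2}\bigl[|X^{m^L}_T - X_T|^2\bigr] \le \frac{[\dot f]_\infty T\sqrt{K_{1,\infty}}}{m^L}.$$
Multiplying the two estimates and collecting the exponents produces exactly the stated inequality, with the quadratic-in-$\lambda$ contribution coming from the Gaussian MGF bound and the linear-in-$|\lambda|$ contribution coming from the discretization bias.

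There is no genuine obstacle here: the corollary is a direct consequence of Theorem \ref{thm:Orlicz} combined with the strong error estimate already available through Proposition \ref{properfort}. The only points worth emphasizing in the write-up are (i) that the two-sided range $|\lambda|\le\mathcal C\min_{\ell}N_\ell m^\ell$ sits inside the one-sided range of Theorem \ref{thm:Orlicz}, so no new analytic work is needed to extend to negative $\lambda$, and (ii) that replacing the biased target $\mathbb Ef(X^{m^L}_T)$ by the true target $\mathbb Ef(X_T)$ costs exactly the additive term $|\lambda|\,[\dot f]_\infty T\sqrt{K_{1,\infty}}/m^L$ in the exponent, consistent with the $\mathcal O(\varepsilon^{4/3})$ bias controlled in Section \ref{sec:opti}.
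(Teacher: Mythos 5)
Your proof is correct and follows essentially the same route as the paper, which simply factors out the constant bias $b_L=\mathbb Ef(X^{m^L}_T)-\mathbb Ef(X_T)$, bounds $|b_L|$ by $[\dot f]_\infty T\sqrt{K_{1,\infty}}/m^L$ via the Lipschitz property of $f$ and Proposition \ref{properfort}, and applies Theorem \ref{thm:Orlicz} to the centered part. Your extra remark that the bound of Theorem \ref{thm:Orlicz} is effectively a constraint on $\lambda^2$ (so that the two-sided range $|\lambda|\le\mathcal C\min_\ell N_\ell m^\ell$ suffices) is exactly the point the paper leaves implicit when it says the corollary follows ``easily.''
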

\subsection{Concentration bounds}\label{sec:Concentrationbounds}
Using the above corollary,   for all  $\lambda \in[0,{\mathcal C}\min_{1\le\ell\le L}m^\ell N_\ell]$ and $\alpha\ge0$,  we get
 \begin{equation}\label{CI}
\PP\left(\hat Q-\mathbb Ef(X_T)\geq \alpha\right)\leq \exp\left\{ \psi_\alpha(\lambda)\right\},
\end{equation}
with
$$
 \psi_\alpha(\lambda):=\lambda^2 [\dot f]^2_{\infty}\left( \frac{T}{2N_0}+\sum_{\ell=1}^L\frac{C_{\eqref{majofin}}(m-1)T^2}{N_\ell m^{2\ell-1}}\right)+\lambda \left(\frac{[\dot f]_{\infty}T\sqrt{K_{1,\infty}}}{m^L} -\alpha\right).
$$
Now, when $0\le\alpha\le 2[\dot f]^2_{\infty}\left( \frac{T}{2N_0}+\sum_{\ell=1}^L\frac{C_{\eqref{majofin}}(m-1)T^2}{N_\ell m^{2\ell-1}}\right) {\mathcal C}\min_{1\le\ell\le L}m^\ell N_\ell+\frac{[\dot f]_{\infty}T\sqrt{K_{1,\infty}}}{m^L}$,
$$\min_{\lambda\in[0,{\mathcal C}\min_{1\le\ell\le L}m^\ell N_\ell]}\psi_\alpha(\lambda)=-\frac{\left(\alpha-\frac{[\dot f]_{\infty}T\sqrt{K_{1,\infty}}}{m^L}\right)_+^2}{4[\dot f]^2_{\infty}\left( \frac{T}{2N_0}+\sum_{\ell=1}^L\frac{C_{\eqref{majofin}}(m-1)T^2}{N_\ell m^{2\ell-1}}\right) },\;\; \mbox{ where } (x)_+=\max(x,0).$$
and otherwise
\begin{align*}
   \min_{\lambda\in[0,{\mathcal C}\min_{1\le\ell\le L}m^\ell N_\ell]}\psi_\alpha(\lambda)&={\mathcal C}\min_{1\le\ell\le L}m^\ell N_\ell\\&\times\bigg(\frac{[\dot f]_{\infty}T\sqrt{K_{1,\infty}}}{m^L}-\alpha+ [\dot f]^2_{\infty}\bigg( \frac{T}{2N_0}+\sum_{\ell=1}^L\frac{C_{\eqref{majofin}}(m-1)T^2}{N_\ell m^{2\ell-1}} \bigg){\mathcal C}\min_{1\le\ell\le L}m^\ell N_\ell\bigg)\\&< -\frac{{\mathcal C}\min_{1\le\ell\le L}m^\ell N_\ell}{2}\alpha.
\end{align*}
Dealing with $\PP\left(\hat Q-\mathbb Ef(X_T)\le-\alpha\right)$ in a symmetric way we end up with the concentration inequality, 
\begin{align*}
   \forall 0\le \alpha \le&2[\dot f]^2_{\infty}\left( \frac{T}{2N_0}+\sum_{\ell=1}^L\frac{C_{\eqref{majofin}}(m-1)T^2}{N_\ell m^{2\ell-1}}\right) {\mathcal C}\min_{1\le\ell\le L}m^\ell N_\ell+\frac{[\dot f]_{\infty}T\sqrt{K_{1,\infty}}}{m^L}\Bigg],\\
&\PP\left(|\hat Q-\mathbb Ef(X_T)|\ge\alpha\right)\le 2\exp\left(-\frac{\left(\alpha-\frac{[\dot f]_{\infty}T\sqrt{K_{1,\infty}}}{m^L}\right)^2}{4 [\dot f]^2_{\infty}\left( \frac{T}{2N_0}+\sum_{\ell=1}^L\frac{C_{\eqref{majofin}}(m-1)T^2}{N_\ell m^{2\ell-1}}\right) }\right)\\
\forall \alpha\ge &2[\dot f]^2_{\infty}\left( \frac{T}{2N_0}+\sum_{\ell=1}^L\frac{C_{\eqref{majofin}}(m-1)T^2}{N_\ell m^{2\ell-1}}\right) {\mathcal C}\min_{1\le\ell\le L}m^\ell N_\ell+ \frac{[\dot f]_{\infty}T\sqrt{K_{1,\infty}}}{m^L},\\&\PP\left(|\hat Q-\mathbb Ef(X_T)|\ge\alpha\right)\le 2\exp\left(-\frac{{\mathcal C}\min_{1\le\ell\le L}m^\ell N_\ell}{2}\alpha\right).\notag
\end{align*}
Hence, we proved our main result, which we now state.  
\begin{theorem}\label{thm:CI} Under assumptions of Theorem \ref{thm:Orlicz}, the multilevel Monte Carlo  estimator \eqref{estimult} satisfies,
 $\forall\;0\le\alpha\le 2[\dot f]^2_{\infty}\left( \frac{T}{2N_0}+\sum_{\ell=1}^L\frac{C_{\eqref{majofin}}(m-1)T^2}{N_\ell m^{2\ell-1}}\right) {\mathcal C}\min_{1\le\ell\le L}m^\ell N_\ell+ \frac{[\dot f]_{\infty}T\sqrt{K_{1,\infty}}}{m^L}$, \begin{equation*}
\PP\left(|\hat Q-\mathbb Ef(X_T)|\ge\alpha\right)\le 2\exp\left(-\frac{\left(\alpha- \frac{[\dot f]_{\infty}T\sqrt{K_{1,\infty}}}{m^L}\right)_+^2 }{2 [\dot f]^2_{\infty}\left( \frac{T}{N_0}+\sum_{\ell=1}^{L}\frac{2C_{\eqref{majofin}}(m-1)T^2}{N_\ell m^{2\ell-1}}\right) }\right),
\end{equation*}
with $(x)_+=\max(x,0).$
\end{theorem}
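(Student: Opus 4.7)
The plan is to derive the concentration bound via a Chernoff--Markov argument applied to the moment generating function estimate provided by Corollary \ref{thm:mgf}. For any $\lambda\in[0,\mathcal{C}\min_{1\le\ell\le L}m^\ell N_\ell]$, Markov's inequality gives
$$
\PP\bigl(\hat Q-\EE f(X_T)\ge\alpha\bigr)\le e^{-\lambda\alpha}\EE\bigl[e^{\lambda(\hat Q-\EE f(X_T))}\bigr]\le\exp\bigl(\psi_\alpha(\lambda)\bigr),
$$
where $\psi_\alpha$ is exactly the quadratic introduced in Section \ref{sec:Concentrationbounds}. Writing $\psi_\alpha(\lambda)=A\lambda^2+B\lambda$ with $A=[\dot f]^2_{\infty}\bigl(\frac{T}{2N_0}+\sum_{\ell=1}^L\frac{C_{\eqref{majofin}}(m-1)T^2}{N_\ell m^{2\ell-1}}\bigr)$ and $B=\frac{[\dot f]_\infty T\sqrt{K_{1,\infty}}}{m^L}-\alpha$, the unconstrained minimizer is $\lambda^\star=(\alpha-[\dot f]_\infty T\sqrt{K_{1,\infty}}/m^L)/(2A)$.

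Next I would split according to whether $\lambda^\star$ lies in $[0,\mathcal{C}\min_\ell m^\ell N_\ell]$. The threshold on $\alpha$ appearing in the theorem is precisely the value of $\alpha$ for which $\lambda^\star=\mathcal{C}\min_\ell m^\ell N_\ell$; in the admissible regime $0\le\alpha\le 2A\,\mathcal{C}\min_\ell m^\ell N_\ell+\frac{[\dot f]_\infty T\sqrt{K_{1,\infty}}}{m^L}$ the interior minimizer is reachable (or, below the bias contribution, the optimum is attained at $\lambda=0$), and completing the square yields
$$
\min_{\lambda\in[0,\mathcal{C}\min_\ell m^\ell N_\ell]}\psi_\alpha(\lambda)=-\frac{\bigl(\alpha-[\dot f]_\infty T\sqrt{K_{1,\infty}}/m^L\bigr)_+^2}{4A}.
$$
The lower tail $\PP(\hat Q-\EE f(X_T)\le -\alpha)$ is handled symmetrically by running the same optimization on $\lambda\in[-\mathcal{C}\min_\ell m^\ell N_\ell,0]$, which is exactly what the $|\lambda|$ formulation of Corollary \ref{thm:mgf} permits. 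Adding the two one-sided bounds produces the factor $2$ in front, and rewriting $1/(4A)=1/(2\cdot 2A)$ matches the denominator as stated in the theorem.

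All of this reduces to routine optimization of a quadratic once Corollary \ref{thm:mgf} is in hand; no further probabilistic input is required. The real obstacle has therefore already been paid in proving Corollary \ref{thm:mgf} through the Clark--Ocone machinery and the moment generating function estimates of Sections 4 and 5, which is what constrains $\lambda$ to the interval $[-\mathcal{C}\min_\ell m^\ell N_\ell,\mathcal{C}\min_\ell m^\ell N_\ell]$ in the first place. The only subtlety here is tracking that this constraint is non-binding precisely in the range of $\alpha$ stated: this is what carves out the Gaussian regime of the concentration inequality and, when the MLMC parameters of Section \ref{sec:opti} are plugged in, translates into the range $\alpha\in(0,c_2\varepsilon^{2/3})$ of the bound \eqref{concbound}.
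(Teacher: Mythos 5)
Your proposal is correct and follows the same route as the paper: apply the Chernoff–Markov bound to the moment generating function estimate of Corollary \ref{thm:mgf}, minimize the resulting quadratic $\psi_\alpha(\lambda)$ over $\lambda\in[0,\mathcal{C}\min_\ell m^\ell N_\ell]$ (the interior minimizer is admissible exactly when $\alpha$ lies in the stated range, and the $(\cdot)_+$ absorbs the case $\alpha$ below the bias), then combine with the symmetric lower-tail bound obtained by taking $\lambda\le 0$. The identification $1/(4A)=1/\bigl(2[\dot f]^2_\infty(T/N_0+\sum_\ell 2C_{\eqref{majofin}}(m-1)T^2/(N_\ell m^{2\ell-1}))\bigr)$ matches the theorem's denominator, so nothing is missing.
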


Notice that the factor $[\dot f]^2_{\infty}\left( \frac{T}{N_0}+\sum_{\ell=1}^{L}\frac{2C_{\eqref{majofin}}(m-1)T^2}{N_\ell m^{2\ell-1}}\right)$ in the denominator is closely related to the non-asymptotic upper-bound $[\dot f]^2_\infty\bigg(\frac{T}{N_0}+\sum_{\ell =1}^L\frac{K_{1,m}(m-1)T^2}{N_\ell m^{2\ell -1}}\bigg)$ of the variance of $\hat{Q}$ derived in Section \ref{sec:opti}. The only difference is the replacement of $K_{1,m}$ by $2C_{\eqref{majofin}}$.
Let us now discuss the constraint on $\alpha$ under which we proved Gaussian type concentration and see that in the limit $\varepsilon\to 0$, for the optimal parameters discussed in Section \ref{sec:opti}, we can choose $\alpha={\mathcal O}(\varepsilon^{2/3})$ i.e. much larger than the root mean square error $\varepsilon$.

Following the discussion and notations of Section \ref{sec:opti}, for $\varepsilon>0$, we consider $\hat Q_\varepsilon$ the MLMC estimator \eqref{estimult} with the optimal parameters 
 $L^{\varepsilon}$, $N^\varepsilon$, 
 \begin{equation}\label{optimal:param}
 N^\varepsilon_0=N^\varepsilon\frac{C_{\eqref{varconst}}}{C_{\eqref{varconst}}+\sum_{\ell =1}^{L^{\varepsilon}}m^{-3\ell/2}}\mbox{ and }
 N^\varepsilon_\ell=N^\varepsilon\frac{m^{-3\ell/2}}{C_{\eqref{varconst}}+\sum_{\ell =1}^{L^{\varepsilon}}m^{-3\ell/2}}, \ell\ge1.
 \end{equation}
One has $\min_{1\le \ell\le L^\varepsilon}m^\ell N^\varepsilon_\ell=\frac{N^\varepsilon m^{-L^\varepsilon/2}}{C_{\eqref{varconst}}+\sum_{\ell =1}^{L^\varepsilon}m^{-3\ell/2}}$ and therefore
$\frac{1}{N^\varepsilon_0}\min_{1\le \ell\le L^\varepsilon}m^\ell N^\varepsilon_\ell=\frac{m^{-L^\varepsilon/2}}{C_{\eqref{varconst}}}$.
As a consequence,
$$\frac{[\dot f]_{\infty}T\sqrt{K_{1,\infty}}}{m^{L^\varepsilon}}+2[\dot f]^2_{\infty}\left( \frac{T}{2N^\varepsilon_0}+\sum_{\ell=1}^{L^\varepsilon}\frac{C_{\eqref{majofin}}(m-1)T^2}{N^\varepsilon_\ell m^{2\ell-1}}\right){\mathcal C}\min_{1\le\ell\le L^\varepsilon}m^\ell N^\varepsilon_\ell\ge \frac{{\mathcal C}[\dot f]^2_{\infty}T}{C_{\eqref{varconst}}}m^{-L^\varepsilon/2}$$
where, according to \eqref{minobiais}, when $\varepsilon<c_1:=[\dot f]_\infty T\sqrt{K_{1,\infty}}\left(1+\frac{2m(\sqrt{m}-1)C_{\eqref{varconst}}}{m+1}\right)^{1/2}$, the right-hand side is larger than
$$c_2\varepsilon^{2/3}\mbox{ with }c_2:=\frac{{\mathcal C}[\dot f]^2_{\infty}T}{2^{1/3}C_{\eqref{varconst}}\sqrt{m}([\dot f]_\infty T\sqrt{K_{1,\infty}})^{2/3}(1+\frac{m(\sqrt{m}-1)}{m+1}C_{\eqref{varconst}})^{1/3}}.
$$
Under the same condition on $\varepsilon$, according to \eqref{majobiais}, 
\begin{align*}
  & [\dot f]_{\infty}T\sqrt{K_{1,\infty}}m^{-L^\varepsilon}\\&<\frac{m\varepsilon^{4/3}}{(1+\frac{m(\sqrt{m}-1)}{m+1}C_{\eqref{varconst}})([\dot f]_\infty T\sqrt{K_{1,\infty}})^{1/3}}\left(\frac{(1+\frac{m(\sqrt{m}-1)}{m+1}C_{\eqref{varconst}})^{1/3}}{2^{2/3}}+\frac{(1+\frac{2m(\sqrt{m}-1)}{m+1}C_{\eqref{varconst}})^{1/3}}{1+\frac{4m(\sqrt{m}-1)}{m+1}C_{\eqref{varconst}}}\right)
\end{align*}
On the other hand, one has
\begin{align}\label{ineq:bv}
   2 [\dot f]^2_{\infty}&\left( \frac{T}{N^\varepsilon_0}+\sum_{\ell=1}^{L^\varepsilon}\frac{2C_{\eqref{majofin}}(m-1)T^2}{N^\varepsilon_\ell m^{2\ell-1}}\right)\\&\le \left(2\vee\frac{4C_{\eqref{majofin}}}{K_{1,m}}\right)[\dot f]^2_\infty\bigg(\frac{K_{1,\infty}T^2}{m^{2L^\varepsilon}}+\frac{T}{N^\varepsilon_0}+\sum_{\ell =1}^{L^\varepsilon}\frac{K_{1,m}(m-1)T^2}{N^\varepsilon_\ell m^{2\ell -1}}\bigg)\le\left(2\vee\frac{4C_{\eqref{majofin}}}{K_{1,m}}\right)\varepsilon^2,
\end{align}
according to the optimization of parameters which follows Proposition \ref{MSE}.
Combining the two last inequalities and the fact that for positive $\alpha$ and $x$, $(\alpha-x)_+^2\ge\frac{\alpha^2}{2}-x^2$, we obtain that for $\varepsilon<c_1$,
\begin{align*}
   \frac{\left(\alpha- \frac{[\dot f]_{\infty}T\sqrt{K_{1,\infty}}}{m^{L^\varepsilon}}\right)_+^2 }{2 [\dot f]^2_{\infty}\left( \frac{T}{N^\varepsilon_0}+\sum_{\ell=1}^{L^\varepsilon}\frac{2C_{\eqref{majofin}}(m-1)T^2}{N^\varepsilon_\ell m^{2\ell-1}}\right) }\ge \frac{\alpha^2}{c_3\varepsilon^2}-c_4\varepsilon^{2/3}
\end{align*}
with
\begin{align*}
  c_3&=\left(4\vee\frac{8C_{\eqref{majofin}}}{K_{1,m}}\right)\\
  c_4&=\frac{2}{c_3}\left(\frac{m}{(1+\frac{m(\sqrt{m}-1)}{m+1}C_{\eqref{varconst}})([\dot f]_\infty T\sqrt{K_{1,\infty}})^{1/3}}\left(\frac{(1+\frac{m(\sqrt{m}-1)}{m+1}C_{\eqref{varconst}})^{1/3}}{2^{2/3}}+\frac{(1+\frac{2m(\sqrt{m}-1)}{m+1}C_{\eqref{varconst}})^{1/3}}{1+\frac{4m(\sqrt{m}-1)}{m+1}C_{\eqref{varconst}}}\right)\right)^2.
\end{align*}
Then
$$\forall \varepsilon\in (0,c_1),\;\forall \alpha\in(0,c_2\varepsilon^{2/3}),\;\PP\left(|\hat Q_\varepsilon-\mathbb Ef(X_T)|\ge\alpha\right)\le 2e^{-\frac{\alpha^2}{c_3\varepsilon^2}+c_4\varepsilon^{2/3}}.$$
Notice that the fact $[\dot f]_{\infty}T\sqrt{K_{1,\infty}}m^{-L^\varepsilon}<\varepsilon$ that the bias is smaller than the precision $\varepsilon$ leads, by a similar reasoning, to the following result.
\begin{corollary} 
Under assumptions of Theorem \ref{thm:Orlicz}, 
 the multilevel Monte Carlo  estimator $\hat Q_\varepsilon$ \eqref{estimult} equipped 
 with the optimal parameters \eqref{optimal:param} satisfies
$$\forall\varepsilon\in (0,c_1),\; \forall\alpha\in(0,c_2\varepsilon^{2/3}),\;\PP\left(|\hat Q_\varepsilon-\mathbb Ef(X_T)|\ge\alpha\right)\le 2e^{-\frac{\alpha^2}{c_3\varepsilon^2}}e^{\frac{2}{c_3}\wedge c_4\varepsilon^{2/3}}.$$
\end{corollary}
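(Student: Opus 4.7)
The plan is to repeat the derivation that precedes the corollary, but replacing the use of the sharper bias estimate \eqref{majobiais} (which gave the $c_4\varepsilon^{2/3}$ term in the exponent) by the weaker, more elementary bound on the bias that follows directly from the level constraint \eqref{minol}, namely
$$b_\varepsilon := \frac{[\dot f]_{\infty}T\sqrt{K_{1,\infty}}}{m^{L^\varepsilon}} < \varepsilon.$$
This swap trades an $\varepsilon$-dependent exponent for a constant exponent $2/c_3$, and taking the minimum of the two bounds gives the stated $\frac{2}{c_3}\wedge c_4\varepsilon^{2/3}$.

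First, I would invoke Theorem \ref{thm:CI}. The threshold separating the Gaussian regime from the linear one has already been shown, in the paragraph just before the corollary, to be at least $c_2\varepsilon^{2/3}$ for $\varepsilon\in(0,c_1)$, so for the range $\alpha\in(0,c_2\varepsilon^{2/3})$ we are entitled to use the Gaussian bound
$$\PP\bigl(|\hat Q_\varepsilon-\mathbb Ef(X_T)|\ge\alpha\bigr)\le 2\exp\!\left(-\frac{(\alpha-b_\varepsilon)_+^2}{V_\varepsilon}\right),\qquad V_\varepsilon:=2[\dot f]_\infty^2\Bigl(\tfrac{T}{N_0^\varepsilon}+\sum_{\ell=1}^{L^\varepsilon}\tfrac{2C_{\eqref{majofin}}(m-1)T^2}{N_\ell^\varepsilon m^{2\ell-1}}\Bigr).$$
From \eqref{ineq:bv} one already knows $V_\varepsilon\le \tfrac{c_3}{2}\varepsilon^2$, with the same constant $c_3=4\vee \tfrac{8C_{\eqref{majofin}}}{K_{1,m}}$ as in the preceding derivation.

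The key (and only non-trivial) algebraic step is the elementary lower bound
$$(\alpha-b)_+^2 \;\ge\; \tfrac{\alpha^2}{2}-b^2,\qquad \alpha,b\ge0,$$
which is immediate from $(\alpha/\sqrt{2}-b\sqrt{2})^2\ge 0$ in the case $\alpha\ge b$ and from $b\ge\alpha\ge\alpha/\sqrt{2}$ otherwise. Applied with $b=b_\varepsilon<\varepsilon$, this gives
$$\frac{(\alpha-b_\varepsilon)_+^2}{V_\varepsilon}\;\ge\;\frac{\alpha^2/2-\varepsilon^2}{c_3\varepsilon^2/2}\;=\;\frac{\alpha^2}{c_3\varepsilon^2}-\frac{2}{c_3},$$
and plugging this into the Gaussian bound yields
$$\PP\bigl(|\hat Q_\varepsilon-\mathbb Ef(X_T)|\ge\alpha\bigr)\le 2\,e^{2/c_3}\,e^{-\alpha^2/(c_3\varepsilon^2)}.$$

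Finally, I would invoke the bound already obtained just before the statement of the corollary, namely
$$\PP\bigl(|\hat Q_\varepsilon-\mathbb Ef(X_T)|\ge\alpha\bigr)\le 2\,e^{c_4\varepsilon^{2/3}}\,e^{-\alpha^2/(c_3\varepsilon^2)},$$
and take the pointwise minimum of the two valid bounds, which replaces the exponential prefactor by $\exp\bigl(\tfrac{2}{c_3}\wedge c_4\varepsilon^{2/3}\bigr)$, exactly as claimed. There is no serious obstacle: the only ingredient not already assembled in the paragraph preceding the corollary is the elementary inequality $(\alpha-b)_+^2\ge \alpha^2/2-b^2$, and the one conceptual point is that we deliberately use the crude bias bound $b_\varepsilon<\varepsilon$ from \eqref{minol} rather than the refined $\mathcal O(\varepsilon^{4/3})$ bound from \eqref{majobiais}, because only the crude bound is dimensionally compatible with producing a purely numerical (rather than $\varepsilon$-vanishing) correction in the exponent.
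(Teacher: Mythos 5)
Your proposal is correct and follows exactly the route the paper indicates: the paragraph before the corollary already establishes the bound with prefactor $e^{c_4\varepsilon^{2/3}}$ via the refined bias estimate \eqref{majobiais}, and the paper's one-line remark ("the fact that the bias is smaller than the precision $\varepsilon$ leads, by a similar reasoning, to the following result") is precisely your substitution of the crude bound $b_\varepsilon<\varepsilon$ into the same chain of inequalities, yielding the constant exponent $2/c_3$, after which the minimum of the two prefactors gives the claim. Your elementary inequality $(\alpha-b)_+^2\ge \alpha^2/2-b^2$ is the same one the paper invokes just above, and the reduction $V_\varepsilon\le \tfrac{c_3}{2}\varepsilon^2$ via \eqref{ineq:bv} is identical, so this fills in the paper's elided "similar reasoning" faithfully.
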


At this stage, a natural question arises: is there  an alternative choice of the  parameters that does not increase neither the root mean square error $\varepsilon$ nor the order in $\varepsilon$ of the computational cost of the multilevel Monte Carlo  estimator and 
for which the upper bound on the deviation parameter $\alpha$  is larger than $c_1\varepsilon^{2/3}$ ?

For $\beta>1$, we set $N^{\varepsilon,\beta}_\ell=N^\varepsilon_\ell\times \frac{m^{\frac{\ell -1}{2}}}{m^{\frac{\ell -1}{2}}\wedge {\ell}^\beta}$ for $\ell \in \{1,\hdots,L^\varepsilon\}$ and 
\begin{equation}\label{super:MLMC}
\hat{Q}_{\varepsilon,\beta}=\frac{1}{N_0^{\varepsilon}}\sum_{k=0}^{N^\varepsilon_0}f( X^1_{T,k})+\sum_{\ell=1}^{L^\varepsilon}\frac{1}{N^{\varepsilon,\beta}_\ell}
\sum_{k=1}^{N^{\varepsilon,\beta}_\ell}\left(f( X^{m^{\ell}}_{T,k})-f( X^{m^{\ell-1}}_{T,k})\right).
\end{equation}
Since for each $\ell$, $N^{\varepsilon,\beta}_\ell\ge N^\varepsilon_\ell$, the root mean square error and the statistical error of $\hat{Q}_{\varepsilon,\beta}$ are not greater than the ones of $\hat{Q}_{\varepsilon}$ and therefore than $\varepsilon$. The two estimators share the same bias.
Moreover, $\min_{1\le \ell\le L^\varepsilon}m^\ell N^{\varepsilon,\beta}_\ell\ge \frac{N^\varepsilon m^{-1/2}}{(L^\varepsilon)^\beta(C_{\eqref{varconst}}+\sum_{\ell =1}^{L^\varepsilon}m^{-3\ell/2})}$ so that$\frac{1}{N^\varepsilon_0}\min_{1\le \ell\le L^\varepsilon}m^\ell N^{\varepsilon,\beta}_\ell\ge \frac{1}{C_{\eqref{varconst}}\sqrt{m}(L^\varepsilon)^\beta}$.
As a consequence,
$$\frac{[\dot f]_{\infty}T\sqrt{K_{1,\infty}}}{m^{L^\varepsilon}}+2[\dot f]^2_{\infty}\left( \frac{T}{2N^\varepsilon_0}+\sum_{\ell=1}^{L^\varepsilon}\frac{C_{\eqref{majofin}}(m-1)T^2}{N^{\varepsilon,\beta}_\ell m^{2\ell-1}}\right){\mathcal C}\min_{1\le\ell\le L^\varepsilon}m^\ell N^{\varepsilon,\beta}_\ell\ge \frac{{\mathcal C}[\dot f]^2_{\infty}T}{C_{\eqref{varconst}}\sqrt{m}(L^\varepsilon)^\beta}$$
where, when $\varepsilon<c_1$, the right-hand side is larger than
$$c_5(\varepsilon):=\frac{{\mathcal C}[\dot f]^2_{\infty}T}{C_{\eqref{varconst}}\sqrt{m}}\left(1+\frac{4}{3\ln m}\ln\bigg(\frac{[\dot f]_{\infty}T\sqrt{2K_{1,\infty}(1+\frac{m(\sqrt{m}-1)}{m+1}C_{\eqref{varconst}})}}{\varepsilon}\bigg)\right)^{-\beta}.
$$
Reasoning like in the above derivation of concentration inequalities for $\hat Q_\varepsilon$, we easily get the following result.
\begin{corollary}\label{cordev2}
Under assumptions of Theorem \ref{thm:Orlicz},  the multilevel Monte Carlo  estimator $\hat Q_{\varepsilon,\beta}$ \eqref{super:MLMC}  satisfies
 $$\forall\varepsilon\in (0,c_1),\;\forall\alpha\in(0,c_5(\varepsilon)),\;
\PP\left(|\hat Q_{\varepsilon,\beta}-\mathbb Ef(X_T)|\ge\alpha\right)\le 2e^{-\frac{\alpha^2}{c_3\varepsilon^2}}e^{\frac{2}{c_3}\wedge c_4\varepsilon^{2/3}}.
$$
\end{corollary}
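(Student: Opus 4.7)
The plan is to replay, with the reinforced sample sizes $N^{\varepsilon,\beta}_\ell$, the sequence of inequalities used just above to deduce the concentration bound for $\hat Q_\varepsilon$ from Theorem \ref{thm:CI}. Three ingredients are needed: that the bias of $\hat Q_{\varepsilon,\beta}$ is unchanged, that the variance-type factor driving the Gaussian rate stays controlled by the same multiple of $\varepsilon^2$, and that the admissibility threshold for $\alpha$ produced by Theorem \ref{thm:CI} is at least $c_5(\varepsilon)$.

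The bias is immediate: $\hat Q_{\varepsilon,\beta}$ and $\hat Q_{\varepsilon}$ telescope across the same number $L^\varepsilon$ of levels, so by \eqref{upperbound:bias} the bias is still bounded by $[\dot f]_\infty T\sqrt{K_{1,\infty}}\,m^{-L^\varepsilon}$, which is smaller than $\varepsilon$ and, through \eqref{majobiais}, is even of order $\mathcal O(\varepsilon^{4/3})$. The variance-type factor is also inherited: since $N^{\varepsilon,\beta}_\ell\ge N^\varepsilon_\ell$ for every $\ell$, inequality \eqref{ineq:bv} established for $\hat Q_\varepsilon$ applies term by term to $\hat Q_{\varepsilon,\beta}$, with the same constant $c_3$.

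The only point requiring work is the lower bound on $\mathcal C\min_{1\le\ell\le L^\varepsilon}m^\ell N^{\varepsilon,\beta}_\ell$. Using $\max(1,x)\ge x$ in the very definition $N^{\varepsilon,\beta}_\ell=N^\varepsilon_\ell\,m^{(\ell-1)/2}/(m^{(\ell-1)/2}\wedge \ell^\beta)$ together with the explicit expression of $N^\varepsilon_\ell$ gives
\begin{equation*}
m^\ell N^{\varepsilon,\beta}_\ell\;\ge\;\frac{N^\varepsilon\,m^{-1/2}}{\ell^\beta\bigl(C_{\eqref{varconst}}+\sum_{\ell'=1}^{L^\varepsilon}m^{-3\ell'/2}\bigr)},
\end{equation*}
which is minimized at $\ell=L^\varepsilon$; dividing by $N^\varepsilon_0$ and keeping only the $\tfrac{T}{2N^\varepsilon_0}$ contribution in the prefactor of Theorem \ref{thm:CI} then yields the lower bound $\mathcal C[\dot f]^2_\infty T\,m^{-1/2}/(C_{\eqref{varconst}}(L^\varepsilon)^\beta)$ for the admissibility threshold. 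Injecting the explicit upper bound $L^\varepsilon<1+\tfrac{4}{3\ln m}\ln\bigl([\dot f]_\infty T\sqrt{2K_{1,\infty}(1+\tfrac{m(\sqrt m-1)}{m+1}C_{\eqref{varconst}})}/\varepsilon\bigr)$ derived from \eqref{minobiais} produces exactly $c_5(\varepsilon)$.

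With these three ingredients in hand, one applies Theorem \ref{thm:CI} to $\hat Q_{\varepsilon,\beta}$ and the elementary inequality $(\alpha-b)_+^2\ge \tfrac{\alpha^2}{2}-b^2$ with $b$ the bias. Two competing estimates on $b^2/(c_3\varepsilon^2/2)$ are then used in parallel: the crude $b\le \varepsilon$ yields the constant $2/c_3$, while the sharper $b\le [\dot f]_\infty T\sqrt{K_{1,\infty}}\,m^{-L^\varepsilon}=\mathcal O(\varepsilon^{4/3})$ given by \eqref{majobiais} yields the vanishing $c_4\varepsilon^{2/3}$, exactly as in the derivation for $\hat Q_\varepsilon$. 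Keeping the minimum of the two produces the announced factor $\exp\bigl(\tfrac{2}{c_3}\wedge c_4\varepsilon^{2/3}\bigr)$. The main obstacle is purely bookkeeping: translating the $(L^\varepsilon)^{-\beta}$ decay of the admissibility threshold into the explicit logarithmic form of $c_5(\varepsilon)$, which requires the sharp upper bound \eqref{minobiais} on $m^{-L^\varepsilon}$ and is the only non-routine estimate.
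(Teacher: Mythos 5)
Your proposal is correct and follows essentially the same route as the paper: the key observations that the bias is unchanged, that $N^{\varepsilon,\beta}_\ell\ge N^{\varepsilon}_\ell$ preserves the variance bound \eqref{ineq:bv}, and that the admissibility threshold scales like $(L^\varepsilon)^{-\beta}$ because $m^{(\ell-1)/2}\wedge\ell^\beta\le(L^\varepsilon)^\beta$, combined with the logarithmic upper bound on $L^\varepsilon$ deduced from \eqref{minobiais}, are exactly the steps taken in the paper. The final assembly via $(\alpha-b)_+^2\ge\tfrac{\alpha^2}{2}-b^2$ and the two competing bounds on the bias to produce $\exp\bigl(\tfrac{2}{c_3}\wedge c_4\varepsilon^{2/3}\bigr)$ also matches the paper's derivation for $\hat Q_\varepsilon$ carried over verbatim.
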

Moreover, the computational cost of $\hat{Q}_{\varepsilon,\beta}$ is proportional to\begin{align*}
   N^{\varepsilon}_0+\frac{m+1}{m}\sum_{\ell=1}^{L^\varepsilon}m^\ell N^{\varepsilon,\beta}_\ell=\frac{N^\varepsilon}{C_{\eqref{varconst}}+\sum_{\ell =1}^{L^{\varepsilon}}m^{-3\ell/2}}
\left(C_{\eqref{varconst}}+\frac{m+1}{m\sqrt{m}}\sum_{\ell=1}^{L^\varepsilon}(m^{\frac{\ell -1}{2}}\wedge {\ell}^\beta)^{-1}\right).\end{align*} Since $\beta>1$, it is of the same order ${\mathcal O}(\varepsilon^{-2})$ as $N^\varepsilon$ and therefore as the computational cost of $\hat Q_\varepsilon$ in the limit $\varepsilon\to 0$.

\subsection{Error control in Orlicz norm }\label{sec:orlicz}
In view of our previous results, it is natural and fruitful to generalize the RMS  error analysis developed in Section \ref{sec:opti} by means of Young functions that are increasing convex functions $\Psi: \mathbb R^+\rightarrow\mathbb R^+$ satisfying $\Psi(0)=0$ and $\lim_{x\to+\infty}\Psi(x)=+\infty$. For a given  Young function $\Psi$ the associated Orlicz norm $\|X\|_\Psi$ of a random variable $X$ is defined by 
$$
\|X\|_{\Psi}:=\inf\{c>0, \; : \mathbb E[\Psi({X}/{c})]\leq 1\} \mbox{ with  } \inf \emptyset =\infty.
$$
We set $\Psi_e(x):=\left(e^x-1\right)/(e-1)$ as a fixed Young function. At first,  let us deal with a standard Monte Carlo algorithm that approximates  $\mathbb E[f(X_T)]$ by $\bar Q:=\frac{1}{N_0}\sum_{k=1}^{N_0}f( X^{m^L}_{T,k})$.
 Then, one can use Section 4 of \cite{FriMen} to bound $\EE\left[\exp(\lambda (\bar Q-\mathbb E[f(X^{m^L}_T)])\right]$ from above. More precisely, taking advantage of the constant diffusion coefficient to improve the bound in Proposition 4.1 of \cite{FriMen} to $[f_i^\Delta]_1\le [\dot f]_\infty e^{[\dot b]_\infty (T-t_i)}$, we get
\begin{equation}
 \label{CI1}
 \EE\left[\exp(\lambda (\bar Q-\mathbb E[f(X^{m^L}_T)])\right]\leq \exp\left\{ \frac{\lambda^2C_{\eqref{CI1}}}{N_0}\right\}\mbox{ with }C_{\eqref{CI1}}=\frac{T[\dot f]_\infty^2}{2m^L}\sum_{k=1}^{m^L}e^{2[\dot b]_\infty (T-\frac{Tk}{m^L})}.
\end{equation}
With \eqref{upperbound:bias}, we easily deduce that for all $c>0$
$$
\mathbb E\left[\Psi_e\left(\frac{ \bar Q-\mathbb E[f(X_T)]}{c}\right) \right]\le \frac{1}{e-1}\left[\exp\left(\frac{C_{\eqref{CI1}}}{c^2 N_0}+\frac{[\dot f]_{\infty}T\sqrt{K_{1,\infty}}}{cm^L}\right)-1\right]
$$
and then 
$$
\left\| \bar Q-\mathbb E[f(X_T)] \right\|_{\Psi_e}\leq 
\frac{[\dot f]_{\infty}T\sqrt{K_{1,\infty}}}{2m^L}+\sqrt{\left(\frac{[\dot f]_{\infty}T\sqrt{K_{1,\infty}}}{2m^L}\right)^2+\frac{C_{\eqref{CI1}}}{ N_0}}.
$$
Recall that the RMS error in this case is given by  
\begin{equation*}
\EE^{1/2}\left[\left(\bar Q-\mathbb E[f(X_T)]\right)^2 \right]\le\sqrt{\left(\frac{[\dot f]_{\infty}T\sqrt{K_{1,\infty}}}{m^L}\right)^2+{{\rm Var}(\bar Q_{0})}}.
\end{equation*}
By performing Taylor expansions as $\lambda\to 0$ in  \eqref{CI1}, we easily get
$
{\rm Var}\left[\bar Q\right]\le \frac{2C_{\eqref{CI1}}}{N_0}.
$
As a consequence, we get 
\begin{equation}\label{MSE:MC}
\EE^{1/2}\left[\left(\bar Q-\mathbb E[f(X_T)]\right)^2 \right]\le\sqrt{\left(\frac{[\dot f]_{\infty}T\sqrt{K_{1,\infty}}}{m^L}\right)^2+\frac{2C_{\eqref{CI1}}}{N_0}}.
\end{equation}
Note that using 
\begin{equation}\label{ineq:orlicz_rmse}
\sqrt{\frac{a^2+b}{2}}\le\frac{a}{2} +\sqrt{\left(\frac{a}{2}\right)^2+\frac{b}{2}}\le\sqrt{\frac{(3+\sqrt{3})}{4}(a^2+b)}, \mbox{ for }a,b>0,
\end{equation}
we easily get
\begin{equation}\label{Orlicz:MC}
\sqrt{\frac{\left(\frac{[\dot f]_{\infty}T\sqrt{K_{1,\infty}}}{m^L}\right)^2+\frac{2C_{\eqref{CI1}}}{N_0}}{2}}\le \left\| \bar Q-\mathbb E[f(X_T)] \right\|_{\Psi_e}\leq \sqrt{\frac{(3+\sqrt{3})}{4}\left[\left(\frac{[\dot f]_{\infty}T\sqrt{K_{1,\infty}}}{m^L}\right)^2+\frac{2C_{\eqref{CI1}}}{N_0}\right]}.
\end{equation}
Then, the RMS error and the Orlicz norm of the standard Monte Carlo share, up to a constant factor, the same order.

Now, we proceed similarly for the multilevel Monte Carlo algorithm and thanks to Corollary \ref{thm:mgf} we write,  for all $c\ge 1/ ({\mathcal C}\min_{1\le\ell\le L}N_\ell m^{\ell})$
\begin{multline*}
\mathbb E\left[\Psi_e\left(\frac{\hat Q-\mathbb E[f(X_T)]}{c}\right) \right]\le \frac{1}{e-1}\\\times\left[\exp\left(\frac{[\dot f]^2_{\infty}}{c^2}\left( \frac{T}{2N_0}+\sum_{\ell=1}^L\frac{C_{\eqref{majofin}}(m-1)T^2}{N_\ell m^{2\ell-1}}\right)+\frac{[\dot f]_{\infty}T\sqrt{K_{1,\infty}}}{cm^L}\right)-1\right].
\end{multline*}
Hence,  if 
\begin{multline}\label{cd:orlicz}
\left(\frac{[\dot f]_{\infty}T\sqrt{K_{1,\infty}}}{2m^L}+\sqrt{\left(\frac{[\dot f]_{\infty}T\sqrt{K_{1,\infty}}}{2m^L}\right)^2+[\dot f]^2_{\infty}\left( \frac{T}{2N_0}+\sum_{\ell=1}^L\frac{C_{\eqref{majofin}}(m-1)T^2}{N_\ell m^{2\ell-1}}\right)}\right)\ge \\\frac{1}{{\mathcal C}\min_{1\le\ell\le L}N_\ell m^{\ell}}, 
\end{multline}
then
\begin{multline*}
\left\|\hat Q-\mathbb E[f(X_T)] \right\|_{\Psi_e}\leq \\
\left(\frac{[\dot f]_{\infty}T\sqrt{K_{1,\infty}}}{2m^L}+\sqrt{\left(\frac{[\dot f]_{\infty}T\sqrt{K_{1,\infty}}}{2m^L}\right)^2+[\dot f]^2_{\infty}\left( \frac{T}{2N_0}+\sum_{\ell=1}^L\frac{C_{\eqref{majofin}}(m-1)T^2}{N_\ell m^{2\ell-1}}\right)}\right)
\end{multline*}
and thus by \eqref{ineq:orlicz_rmse}, we get
\begin{equation}\label{Orlicz:MLMC}
\left\|\hat Q-\mathbb E[f(X_T)] \right\|_{\Psi_e}\leq c_6\sqrt{\left(\frac{[\dot f]_{\infty}T\sqrt{K_{1,\infty}}}{m^L}\right)^2+[\dot f]^2_{\infty}\left( \frac{T}{N_0}+\sum_{\ell=1}^L\frac{K_{1,m}(m-1)T^2}{N_\ell m^{2\ell-1}}\right)}, 
\end{equation}
where $ c_6:=\left[\left(\frac{3+\sqrt{3}}{4}\right)\left(1 \vee \frac{2C_{\eqref{majofin}}}{K_{1,m}}\right)\right]^{1/2}$.

Note that the above upper bound is equal, up to a constant factor,  to the upper bound given by the RMS error estimate of  Proposition \ref{MSE}. Combining this result with \eqref{MSE:MC}, \eqref{Orlicz:MC} and \eqref{Orlicz:MLMC}, we conclude that compared to the standard Monte Carlo method, under constraint \eqref{cd:orlicz}, the multilevel Monte Carlo estimator achieves the  same complexity reduction for the  Orlicz norm $\|\cdot\|_{\Psi_e}$ as for the RMS error.

Now, it remains to check the validity of constraint \eqref{cd:orlicz} when choosing  the multilevel Monte Carlo algorithm optimal parameters derived in Section \ref{sec:opti}. Let us recall that in this setting the RMS error upper bound is equal to $\varepsilon$. 
On the one hand, using \eqref{ineq:orlicz_rmse}, we deduce that the term in the left hand side of  \eqref{cd:orlicz} is larger than $c'_6\varepsilon$, where $c'_6:=\left[\left(\frac{1}{2} \wedge \frac{C_{\eqref{majofin}}}{K_{1,m}}\right)\right]^{1/2}$.
On the other hand, combining $\min_{1\le \ell\le L^\varepsilon}m^\ell N^\varepsilon_\ell=\frac{N^\varepsilon m^{-L^\varepsilon/2}}{C_{\eqref{varconst}}+\sum_{\ell =1}^{L^\varepsilon}m^{-3\ell/2}}$ together with \eqref{eq:Nsize} and \eqref {minobiais}, we get for all $0\le \varepsilon \le c_1$
$$
 {\mathcal C}\min_{1\le\ell\le L}N_\ell m^{\ell}\ge  {\mathcal C} C_{\eqref{varconst}}\frac{K_{1,m}m^2(m-1)[\dot f]^2_\infty T^2m^{-L^\varepsilon/2}}{(m+1)\varepsilon^2}
 \ge c_7 {\varepsilon^{-4/3}}
$$
where $c_7:={\mathcal C} C_{\eqref{varconst}}\frac{K_{1,m}m^{3/2}(m-1)([\dot f]_\infty T)^{4/3}}{2^{1/3}(m+1)(K_{1,\infty})^{1/3}(1+\frac{m(\sqrt{m}-1)}{m+1}C_{\eqref{varconst}})^{1/3}}$. 
Hence, for all $0\le \varepsilon\le \left(c_6'c_7\right)^{3}\wedge c_1$  condition  \eqref{cd:orlicz} is satisfied and
$$
\left\|{\hat Q}_\varepsilon-\mathbb E[f(X_T)] \right\|_{\Psi_e}\leq c_6\varepsilon.$$


\section{Error expansion and  moment generating function of  $\max_{1\le k\le n}|X^{mn}_{t_k}-X^n_{t_k}|^2 $  }\label{secu}
For $(n,m)\in {\mathbb N}^*\times \bar{\mathbb N}$, we consider the Euler scheme $X^{n}$ on the grid $(t_k=\frac{kT}{n})_{0\le k\le n}$ and the process $X^{mn}$ which is the Euler scheme on the finer grid $(\frac{jT}{mn})_{0\le j\le mn}$ when $m$ is finite and the solution to \eqref{1} when $m=\infty$.  
We introduce the difference $U_{t_{k}}=X^{mn}_{t_{k}}-X^{n}_{t_{k}}$ between the two processes and define $U^\star_T=\max_{0\le k\le n}|U_{t_{k}}|$. For $s\in[0,T]$, we set $$\eta_l(s)=\left\lfloor \frac{ls}{T}\right\rfloor \frac{T}{l},\;\hat\eta_l(s)=\left\lceil \frac{ls}{T}\right\rceil \frac{T}{l}\mbox{ when }l\in{\mathbb N}^*\mbox{ and }\eta_\infty(s)=\hat\eta_\infty(s)=s.$$ We have $U_0=0$ and 
\begin{align*}
 \forall k\in\{0,\hdots,n-1\},\;U_{t_{k+1}}
&=U_{t_{k}}+\int_{t_k}^{t_{k+1}} b(X^{mn}_{\eta_{mn}(s)})ds-\frac{T}{n}b(X^n_{t_k})\\&=U_{t_k}+
\frac{T}{n}\left(b(X^{mn}_{t_k})-b(X^n_{t_k})\right)+\int_{t_k}^{t_{k+1}}\left(b(X^{mn}_{\eta_{mn}(s)})-b(X^{mn}_{t_k})\right)ds\end{align*}
To deal with this induction equation, it is convenient to introduce the matrices 
$$A_{k}=I_d+\mathbf 1_{\{U_{t_{k}}\neq 0\}}\frac{T}{n}\times\frac{\left(b(X^{mn}_{t_{k}})
-b(X^{n}_{t_{k}})\right)U_{t_{k}}^{\tp}}{\left|U_{t_{k}}\right|^2},$$
\begin{equation*}
\forall l\le k,\;\mathcal A^{l}_{k}:= A_{k}A_{k-1}\hdots A_{l}\mbox{ and  }\mathcal A^{k+1}_{k}=I_d.
\end{equation*}

This way, we have \begin{equation}
   U_{t_{k+1}}
=A_{k}U_{t_{k}}+\int_{t_k}^{t_{k+1}}\left(b(X^{mn}_{\eta_{mn}(s)})-b(X^{mn}_{t_k})\right)ds\label{recurs}.
\end{equation}
Let us introduce $V_{t_{k}}=\int_0^{t_k} \left(b(X^{mn}_{\eta_{mn}(s)})-b(X^{mn}_{\eta_{n}(s)})\right)ds$. One can check by induction that
\begin{equation}
   U_{t_{k}}=\sum_{l=1}^{k-1} \mathcal A_{k-1}^{l+1}(A_{l}-I_d)V_{t_{l}}+V_{t_{k}}.\label{formu}
\end{equation}
Indeed, since
\begin{align*}
   &\sum_{l=1}^k \mathcal A_{k}^{l+1}(A_{l}-I_d)V_{t_{l}}+V_{t_{k+1}}=(A_{k}-I_d)V_{t_{k}}+A_{k}\bigg(\sum_{l=1}^{k-1} \mathcal A_{k-1}^{l+1}(A_{l}-I_d)V_{t_{l}}+V_{t_{k}}\bigg)-A_{k}V_{t_{k}}+V_{t_{k+1}}\\
&\phantom{\sum}=A_{k}\bigg(\sum_{l=1}^{k-1} \mathcal A_{k-1}^{l+1}(A_{l}-I_d)V_{t_{l}}+V_{t_{k}}\bigg)+V_{t_{k+1}}-V_{t_{k}}
\end{align*}
and $V_{t_{k+1}}-V_{t_{k}}=\int_{t_k}^{t_{k+1}} \left(b(X^{mn}_{\eta_{mn}(s)})-b(X^{mn}_{\eta_{n}(s)})\right)ds$ both sides of \eqref{formu} statisfy the induction equality \eqref{recurs}. 
By It\^o's formula and the integration by parts formula, for $k\in\{1,\hdots,n\}$,
$$V_{t_k}=\int_0^{t_k}\gamma(s)\left((\nabla b(X^{mn}_s)b(X^{mn}_{\eta_{mn}(s)})+\frac{\Delta b}{2}(X^{mn}_s))ds+\nabla b(X^{mn}_s)dW_s\right)$$
where $\gamma(s)=(\hat{\eta}_n(s)-\hat{\eta}_{mn}(s))$.
Therefore
\begin{align}
   &U_{t_k}=(U\ta_{t_k}+U\tb_{t_k})\mbox{ with }\notag\\&U\ta_{t_k}=\sum_{l=1}^{k-1} \mathcal A^{l+1}_{k-1}(A_{l}-I_d)\int_0^{t_{l}}\gamma(t)\nabla b(X^{mn}_t)dW_t+\int_0^{{t_k}}\gamma(t)\nabla b(X^{mn}_t)dW_t\label{defu-1}\\
&\mbox{ and }U\tb_{t_k}=\sum_{l=1}^{k-1} \mathcal A^{l+1}_{k-1}(A_{l}-I_d)\int_0^{t_{l}}\gamma(t)(\nabla b(X^{mn}_t)b(X^{mn}_{\eta_{mn}(t)})+\frac{1}{2}\Delta b(X^{mn}_t))dt\notag\\&\phantom{\mbox{ and }U^2_{t_k}=}+\int_0^{{t_k}}\gamma(t)(\nabla b(X^{mn}_t)b(X^{mn}_{\eta_{mn}(t)})+\frac{1}{2}\Delta b(X^{mn}_t))dt\notag
\end{align}
respectively giving the contributions of the stochastic and the deterministic integrals. One can take advantage of the simpler expression $$U_{t_{k}}=\sum_{l=1}^{k-1} \mathcal A_{k-1}^{l+1}\int_{t_l}^{t_{l+1}}\left(b(X^{mn}_{\eta_{mn}(s)})-b(X^{mn}_{\eta_{n}(s)})\right)ds$$ (also proved by induction) and the linearity of the decomposition into stochastic (w.r.t. $dW_t$) and standard (w.r.t. $dt$) integrals to rewrite 
\begin{equation*}
   U\tb_{t_k}=\int_0^{t_k}\mathcal A^{\lceil\frac{nt}{T}\rceil}_{k-1}\gamma(t)(\nabla b(X^{mn}_t)b(X^{mn}_{\eta_{mn}(t)})+\frac 12\Delta b(X^{mn}_t))dt
\end{equation*} but the analogous expression of $U^1_T$ does not make sense because of the non-adapted factor $\mathcal A_{k-1}^{\lceil\frac{nt}{T}\rceil}$. This is the reason why we introduced the more complicated decomposition \eqref{formu}. Notice however than $n>T[\dot b]_\infty$, then the matrices $A_k$ and therefore $\mathcal A^1_{k}$ are invertible for $k\in\{0,\hdots,n\}$ so that $U\ta_{t_k}=\mathcal A^{1}_{k-1}\int_0^{t_k}\mathcal (\mathcal A^{1}_{\lfloor\frac{nt}{T}\rfloor})^{-1}\gamma(t)\nabla b(X^{mn}_t)dW_t$.
With Lemma \ref{aainv} just below, we conclude that
\begin{align}
   U^\star_T&\le U\taa^\star+U\tba^\star\mbox{ where }\notag\\
U\taa^\star&=\frac{[\dot b]_\infty T}{n}\sum_{k=1}^{n-1}e^{[\dot b]_\infty(T-t_{k+1})}\left|\int_0^{t_{l}}\gamma(t)\nabla b(X^{mn}_t)dW_t\right|+\max_{1\le k\le n}\left|\int_0^{{t_k}}\gamma(t)\nabla b(X^{mn}_t)dW_t\right|\label{defu1}\\
U\tba^\star&=\int_0^{T}e^{[\dot b]_\infty(T-t)}\gamma(t)\left|\nabla b(X^{mn}_t)b(X^{mn}_{\eta_{mn}(t)})+\frac 12\Delta b(X^{mn}_t)\right|dt.\label{defu2}
\end{align}

Let us state two lemmas that will be used to deal with $U\taa^\star$ and $U\tba^\star$ in the proof of Theorem \ref{laplace1}. The first one follows from usual linear algebra arguments and the submultiplicative property of the matrix norm. 
\begin{lemma}\label{aainv} One has  $\forall k\in\{1,\hdots,n-1\}$,
\begin{equation}\label{ub_A}
\|A_{k} -I_d\|\leq \frac{T[\dot b]_\infty}{n},\;\;\|A_{k}\|\leq 1+\frac{T[\dot b]_\infty}{n}\;\;\mbox{ and }\;\;\forall l\in\{0,\hdots,k\},\;\|\mathcal A_k^{l+1}\|\leq e^{[\dot b]_\infty (t_{k}-t_l)}.
\end{equation}
\end{lemma}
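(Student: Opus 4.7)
The statement is essentially a standard linear-algebra estimate, so the plan splits cleanly into three short steps, and I do not expect any real obstacle.

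\textbf{Step 1: Bound on $\|A_k - I_d\|$.} The key observation is that
$$A_k - I_d = \mathbf{1}_{\{U_{t_k}\neq 0\}}\frac{T}{n}\cdot u_k v_k^\top, \quad u_k = b(X^{mn}_{t_k}) - b(X^n_{t_k}), \quad v_k = \frac{U_{t_k}}{|U_{t_k}|^2},$$
is a rank-one matrix. For any rank-one matrix $uv^\top$, one has $\|uv^\top\| = |u|\,|v|$ (since $\sup_{|x|=1}|uv^\top x| = |u|\sup_{|x|=1}|v^\top x| = |u|\,|v|$ by Cauchy--Schwarz). Combined with the fact that $\|\nabla b\|\le[\dot b]_\infty$ makes $b$ globally $[\dot b]_\infty$-Lipschitz (write $b(y)-b(x)=\int_0^1 \nabla b(x+s(y-x))(y-x)\,ds$), this yields
$$\|A_k - I_d\| = \frac{T}{n}\cdot\frac{|b(X^{mn}_{t_k})-b(X^n_{t_k})|}{|U_{t_k}|}\mathbf{1}_{\{U_{t_k}\neq 0\}} \le \frac{T[\dot b]_\infty}{n}.$$

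\textbf{Step 2: Bound on $\|A_k\|$.} Immediate from the triangle inequality: $\|A_k\|\le\|I_d\|+\|A_k-I_d\|\le 1+T[\dot b]_\infty/n$.

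\textbf{Step 3: Bound on $\|\mathcal{A}_k^{l+1}\|$.} For $l<k$, the submultiplicativity of the operator norm and Step 2 give
$$\|\mathcal{A}_k^{l+1}\| = \|A_k A_{k-1}\cdots A_{l+1}\| \le \prod_{j=l+1}^{k}\|A_j\| \le \left(1+\frac{T[\dot b]_\infty}{n}\right)^{k-l}.$$
Applying $1+x\le e^x$ and noting that $(k-l)T/n = t_k - t_l$, we obtain the desired exponential bound. The edge case $l=k$ is covered by the convention $\mathcal{A}^{k+1}_k = I_d$, for which both sides equal $1$.

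\textbf{Anticipated difficulty.} None — the proof is a direct consequence of the rank-one structure of $A_k-I_d$, the Lipschitz property of $b$, and submultiplicativity. The only subtlety is the vanishing case $U_{t_k}=0$, which is neutralized by the indicator in the definition of $A_k$.
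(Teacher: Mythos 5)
Your proof is correct and it is precisely the "usual linear algebra arguments and submultiplicativity" the paper invokes without spelling out: the rank-one identity $\|uv^\top\|=|u|\,|v|$, the Lipschitz bound for $b$, the triangle inequality, submultiplicativity of the operator norm, and $1+x\le e^x$. Nothing to add.
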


\begin{lemma}\label{momXn} When $b$ is Lipschitz continuous with constant $[\dot b]_\infty\in (0,+\infty)$, one has
\begin{enumerate}
\item 
   $\forall t\in[0,T],\;\sup_{n\in\bar{\mathbb N}}\sup_{s\le t}|X^n_s-x_0|\le e^{[\dot b]_\infty t}\sup_{s\le t}|W_s|+\frac{e^{[\dot b]_\infty t}-1}{[\dot b]_\infty}|b(x_0)|$,
   \item $\sup_{n\in\bar{\mathbb N}}\sup_{s\le t}|b(X^n_s)|\le e^{[\dot b]_\infty t}\left([\dot b]_\infty\sup_{s\le t}|W_s|+|b(x_0)|\right).$
\item Moreover, under ${\mathbf (\mathbf R\mathbf 1)}$, we have
$$\sup_{n\in\bar{\mathbb N}}\sup_{s\le t}|\Delta b(X^n_s)|\le 2a_{\Delta b}\left(e^{[\dot b]_\infty t}\sup_{s\le t}|W_s|+\frac{e^{[\dot b]_\infty t}-1}{[\dot b]_\infty}|b(x_0)|+1\right)$$
\item and under ${\mathbf (\mathbf R\mathbf 2)}$, we have
$$\sup_{n\in\bar{\mathbb N}}\sup_{s\le t}|\nabla\Delta b(X^n_s)|\le 2a_{\nabla\Delta b}\left(e^{[\dot b]_\infty t}\sup_{s\le t}|W_s|+\frac{e^{[\dot b]_\infty t}-1}{[\dot b]_\infty}|b(x_0)|+1\right).$$
\end{enumerate}
\end{lemma}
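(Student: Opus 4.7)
My approach is to prove (1) first by a Grönwall argument applied pathwise, and then derive (2), (3), (4) as direct consequences of (1) by plugging the bound on $|X^n_s-x_0|$ into the respective Lipschitz or affine-growth estimates on $b$, $\Delta b$, $\nabla\Delta b$.

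For (1), I start from the integral form of the (continuous) Euler scheme (resp.\ the SDE when $m=\infty$):
\[
X^n_s - x_0 = W_s + \int_0^s b(X^n_{\eta_n(u)})\,du,
\]
which is valid for all $n\in\bar{\mathbb N}$ with the convention $\eta_\infty(u)=u$. Using $|b(y)|\le|b(x_0)|+[\dot b]_\infty|y-x_0|$, I get
\[
|X^n_s-x_0|\le |W_s|+|b(x_0)|\,s+[\dot b]_\infty\int_0^s |X^n_{\eta_n(u)}-x_0|\,du.
\]
Set $f_n(s):=\sup_{r\le s}|X^n_r-x_0|$ and $W^\star(s):=\sup_{r\le s}|W_r|$. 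Since $\eta_n(u)\le u$, one has $|X^n_{\eta_n(u)}-x_0|\le f_n(u)$, and taking $\sup_{r\le s}$ on both sides (the right-hand side is non-decreasing in $s$) yields
\[
f_n(s)\le W^\star(s)+|b(x_0)|\,s+[\dot b]_\infty\int_0^s f_n(u)\,du.
\]

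I then apply the (pathwise) integral form of Grönwall's lemma with forcing $g(u):=W^\star(u)+|b(x_0)|u$:
\[
f_n(s)\le g(s)+[\dot b]_\infty\int_0^s g(u)\,e^{[\dot b]_\infty(s-u)}\,du.
\]
The two contributions are handled separately. Since $W^\star$ is non-decreasing, $W^\star(u)\le W^\star(s)$, so $W^\star(s)+[\dot b]_\infty W^\star(s)\int_0^s e^{[\dot b]_\infty(s-u)}du=W^\star(s)\,e^{[\dot b]_\infty s}$. For the deterministic part, an explicit integration by parts gives $|b(x_0)|s+|b(x_0)|[\dot b]_\infty\int_0^s u\,e^{[\dot b]_\infty(s-u)}du=|b(x_0)|(e^{[\dot b]_\infty s}-1)/[\dot b]_\infty$. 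Adding these two contributions proves (1), and because $[\dot b]_\infty$, $|b(x_0)|$, $W^\star(t)$ and the integral estimates do not depend on $n$, the bound holds uniformly in $n\in\bar{\mathbb N}$.

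The remaining items are immediate. For (2), write $|b(X^n_s)|\le|b(x_0)|+[\dot b]_\infty|X^n_s-x_0|$, take $\sup_{s\le t}$, and plug in (1); the $|b(x_0)|$ terms combine so that $1+(e^{[\dot b]_\infty t}-1)=e^{[\dot b]_\infty t}$. For (3), assumption $(\mathbf R\mathbf 1)$ gives $|\Delta b(x)|\le 2a_{\Delta b}(1+|x-x_0|)$, so
\[
\sup_{s\le t}|\Delta b(X^n_s)|\le 2a_{\Delta b}\bigl(1+\sup_{s\le t}|X^n_s-x_0|\bigr),
\]
and (1) finishes the proof. Item (4) is identical, using $(\mathbf R\mathbf 2)$ to bound $\|\nabla\Delta b(x)\|\le 2a_{\nabla\Delta b}(1+|x-x_0|)$. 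There is no real obstacle here; the only subtle point is making sure the Grönwall step in (1) is carried out in a way that is uniform over $n\in\bar{\mathbb N}$, which is handled by the fact that $\eta_n(u)\le u$ for every $n$ so that the same scalar integral inequality governs $f_n$ independently of $n$.
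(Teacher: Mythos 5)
Your proof is correct and follows essentially the same route as the paper: write the continuous Euler/SDE dynamics in integral form, bound $|b|$ via the Lipschitz property, pass to $\sup_{s\le t}|X^n_s-x_0|$ using $\eta_n(u)\le u$, apply Grönwall pathwise, and then deduce items (2)–(4) by plugging (1) into the Lipschitz and affine-growth bounds on $b$, $\Delta b$, $\nabla\Delta b$. The only cosmetic difference is that the paper absorbs the constant $|b(x_0)|/[\dot b]_\infty$ into the unknown function before applying Grönwall, whereas you keep the forcing $|b(x_0)|u$ and integrate it explicitly; both yield the same bound.
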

Notice that the function $b$ is Lipschitz continuous with constant $[\dot b]_\infty$ under ${\mathbf (\mathbf R\mathbf 1)}$.\begin{proof}For $n\in\bar{\mathbb N}$, one has 
$$\forall t\in[0,T],\;X^n_t-x_0=W_t+\int_0^t(b(X^n_{\eta_n(r)})-b(x_0))+b(x_0)dr.$$
Since $b$ is Lipschitz continuous with constant $[\dot b]_\infty$,

\begin{equation}
   \label{lipb}\forall x\in\R^d,\;|b(x)|\le|b(x)-b(x_0)| +|b(x_0)|\le[\dot b]_\infty|x-x_0|+|b(x_0)|.
\end{equation}One deduces that 
$$\sup_{s\le t}|X^n_s-x_0|\le \sup_{s\le t}|W_s|+[\dot b]_\infty\int_0^t
\sup_{s\le r}|X^n_s-x_0|+\frac{|b(x_0)|}{[\dot b]_\infty}dr.$$
Applying Gronwall's lemma to the function $t\mapsto\sup_{s\le t}|X^n_s-x_0|+\frac{|b(x_0)|}{[\dot b]_\infty}$, one obtains that
$$\forall t\in[0,T],\;\sup_{s\le t}|X^n_s-x_0|\le e^{[\dot b]_\infty t}\sup_{s\le t}|W_s|+\frac{e^{[\dot b]_\infty t}-1}{[\dot b]_\infty}|b(x_0)|.$$
The second (resp. third and fourth)  inequality follows by using \eqref{lipb} (resp. \eqref{afflapb} and \eqref{affgradlapb}). 
\end{proof}
\begin{proof}[Proof of Theorem \ref{laplace1}]
Let $x\ge 0$. Since $\max_{1\le k\le n}|U_{t_k}|=U^\star_T\le U\taa^\star+U\tba^\star$, Jensen's inequality implies that 
\begin{equation*}
   \left(\frac{(m-1)T^2x}{2mn}+\max_{1\le k\le n}|U_{t_k}|\right)^2\le \frac{(U\taa^\star)^2}{q}+\frac{(\frac{(m-1)T^2x}{2mn}+U\tba^\star)^2}{1-q}\end{equation*} where $q\in (0,1)$ is a parameter to be optimized later. With H\"older's inequality, we deduce that
\begin{align}
   \EE\left[  \exp\left\{ \rho \bigg(\frac{(m-1)T^2x}{2mn}+U^\star_T\bigg)^2\right\}\right]\leq \EE^q\left[ \exp\left\{\frac{\rho(U\taa^\star)^2}{q ^2}\right\}\right] \EE^{1-q}\left[ \exp\left\{\frac{\rho(\frac{(m-1)T^2x}{2mn}+U^\star\tba)^2}{(1-q)^2}\right\}\right].\label{UB:total}\end{align}

$\bullet\,$ {\bf First term}. Let us first deal with the contribution of $U\taa^\star$. Let us introduce the quantities  
\begin{equation}
   C=\sqrt{T}+\sum_{k=1}^{n-1}\frac{T[\dot b]_\infty e^{[\dot b]_\infty(T-t_{k+1})}\sqrt{t_{k}}}{n}\le \sqrt{T}+[\dot b]_\infty\int_0^Te^{[\dot b]_\infty(T-t)}\sqrt{t} dt:=C_{\eqref{cte:maj}},\label{cte:maj}
\end{equation}
 $p_n=\frac{\sqrt{T}}{C}$ and $p_{k}=\frac{[\dot b]_\infty T e^{[\dot b]_\infty(T-t_{k+1})}\sqrt{t_{k}}}{Cn}$ for $1\le k\le n-1$. 
Notice that $\sum_{k=1}^{n}p_{k}=1$ so that we have defined a probability measure. By \eqref{defu1},
\begin{align*}
 U^\star\taa&=C\sum_{k=1}^{n-1} \frac{p_k}{\sqrt{t_k}}\left|\int_0^{t_{k}}\gamma(t)\nabla b(X^{mn}_t)dW_t\right|+\frac{Cp_n}{\sqrt{T}}\max_{1\le k\le n}\left|\int_0^{t_k}\gamma(t)\nabla b(X^{mn}_t)dW_t\right|\\
&\le C_{\eqref{cte:maj}}\sum_{k=1}^{n} \frac{p_k}{\sqrt{t_k}}\max_{1\le l\le k}\left|\int_0^{t_{l}}\gamma(t)\nabla b(X^{mn}_t)dW_t\right|.
\end{align*}
Applying Jensen's inequality to the convex function $\R\ni x\mapsto \exp\left\{\frac{\rho x^2}{q ^2}\right\}$, we deduce that 
\begin{align*}
   \EE\left[  \exp\left\{\frac{\rho}{q^2}(U\taa^\star)^2\right\}\right]
\le \EE\bigg[ \sum_{k=1}^np_{k}\exp\left\{\frac{ C_{\eqref{cte:maj}}^2\rho}{q ^2t_{k}}\max_{1\le l\le k}\bigg|\int_0^{t_{l}}\gamma(t)\nabla b(X^{mn}_t)dW_t\bigg|^2\right\}\bigg].\end{align*}
Now, using the periodicity of the function $\gamma$ with period $t_1=T/n$, we get
\begin{align*}
\int_0^{t_{k}}\gamma(t)^2\Tr\left[\nabla b(X^{mn}_t) \nabla b(X^{mn}_t)^{\tp}\right]dt&\leq d [\dot b]^2_\infty\int_0^{t_{k}}\gamma^2(t)dt=d [\dot b]^2_\infty t_kT^2\frac{(2m-1)(m-1)}{6(mn)^2}.
\end{align*}
Then, by the second assertion in Lemma \ref{Laplace_G_square},
\begin{align}
 \forall \rho\in \bigg[0,&\frac{3(q mn)^2}{4C_\eqref{cte:maj}^2 T^2 d[\dot b]^2_\infty (2m-1)(m-1) }\bigg],\notag\\
  &\EE\left[  \exp\left\{\frac{\rho(U\taa^\star)^2}{q^2}\right\}\right]\le
 \exp\left\{\frac{2\ln(2) C_\eqref{cte:maj}^2\rho T^2 d[\dot b]^2_\infty (2m-1)(m-1)}{3(q mn)^2 }\right\}.\label{UB:1}
\end{align}
$\bullet\,$ {\bf Second term}.  
On the other hand, by \eqref{defu2} and Lemma \ref{momXn}, one has
\begin{align*}
U\tba^\star&\le \int_0^T\gamma(t)e^{[\dot b]_\infty(T-t)}\bigg(e^{[\dot b]_\infty t}([\dot b]^2_\infty+a_{\Delta b})\bigg(\sup_{s\le t}|W_s|+\frac{|b(x_0)|}{[\dot b]_\infty}\bigg)+a_{\Delta b}\bigg)dt.\end{align*}
Using $\int_0^T\gamma(t)dt=\frac{(m-1)T^2}{2mn}$ and $\tilde x=\frac{[\dot b]_\infty e^{-[\dot b]_\infty T}}{([\dot b]^2_\infty+a_{\Delta b})}x$, one deduces that
\begin{align*}
&\frac{(m-1)T^2x}{2mn}+U\tba^\star\le e^{T[\dot b]_\infty}([\dot b]^2_\infty+a_{\Delta b})\int_0^T\gamma(t)\bigg(\sup_{s\le t}|W_s|+\frac{|b(x_0)|+[\dot b]_\infty+\tilde x}{[\dot b]_\infty}\bigg)dt\notag\\
&=e^{T[\dot b]_\infty}([\dot b]^2_\infty+a_{\Delta b})\left(\int_0^T\gamma(t)\sup_{s\le t}|W_s|dt+\frac{(m-1)T^2}{2mn}\times\frac{|b(x_0)|+[\dot b]_\infty+\tilde x}{[\dot b]_\infty}\right)\\
&=e^{T[\dot b]_\infty}([\dot b]^2_\infty+a_{\Delta b})\int_0^T\gamma(t)\left(\sup_{s\le t}|W_s|+\frac{(m-1)T^2\sqrt{t}}{2mn\int_0^T\sqrt{r}\gamma(r)dr}\times\frac{|b(x_0)|+[\dot b]_\infty+\tilde x}{[\dot b]_\infty}\right)dt,
\end{align*}
where we used the periodicity of $\gamma(t)$ for the first equality. Setting $C_{\eqref{maju2}}=e^{T[\dot b]_\infty}([\dot b]^2_\infty+a_{\Delta b})$ and using Jensen's inequality for the probability density $p(t)=\frac{\sqrt{t}\gamma(t)}{\int_0^T\sqrt{s}\gamma(s)ds}$ on $[0,T]$, we obtain that
\begin{align}
   \EE&\left[  \exp\left\{\frac{\rho(\frac{(m-1)T^2x}{2mn}+U\tba^\star)^2}{(1-q)^2}\right\}\right]\le\int_0^T\EE\bigg[\exp\bigg\{\frac{\rho C^2_\eqref{maju2}(\int_0^T\sqrt{r}\gamma(r)dr)^2}{(1-q)^2}\notag\\&\;\;\;\;\;\;\;\;\;\;\;\;\;\;\;\;\;\;\;\;\;\left(\frac{1}{\sqrt{t}}\sup_{s\le t}|W_s|+\frac{(m-1)T^2}{2mn\int_0^T\sqrt{r}\gamma(r)dr}\times\frac{|b(x_0)|+[\dot b]_\infty+\tilde x}{[\dot b]_\infty}\right)^2\bigg\}\bigg]p(t)dt.\label{maju2}
\end{align}
Since $\sup_{s\le 1}|W_s|\le\sqrt{\sum_{i=1}^d\sup_{s\le 1}|W^i_s|^2}\le \sum_{i=1}^d\sup_{s\le 1}|W^i_s|$, for $\delta\ge 0$, 
\begin{align*}
   \left(\sup_{s\le 1}|W_s|+\delta\right)^2\le\sum_{i=1}^d\sup_{s\le 1}|W^i_s|^2+2\delta \sum_{i=1}^d\sup_{s\le 1}|W^i_s|+\delta^2\le \sum_{i=1}^d \left(\sup_{s\le 1}|W^i_s|+\delta\right)^2-(d-1)\delta^2,
\end{align*}
where the random variables in the sum in the right-hand side are independent. Setting $\delta=\frac{(m-1)T^2}{2mn\int_0^T\sqrt{r}\gamma(r)dr}\times\frac{|b(x_0)|+[\dot b]_\infty+\tilde x}{[\dot b]_\infty}$, plugging this inequality in \eqref{maju2} after using the scaling property of the Brownian motion $W$, we deduce that
\begin{align*}
   \EE\left[  \exp\left\{\frac{\rho (\frac{(m-1)T^2x}{2mn}+U\tba^\star)^2}{(1-q)^2}\right\}\right]\le &\exp\bigg\{-\frac{(d-1)(m-1)^2\rho C^2_\eqref{maju2} T^4(|b(x_0)|+[\dot b]_\infty+\tilde x)^2}{4((1-q)mn)^2[\dot b]^2_\infty}\bigg\}\\&\times\EE^d\bigg[\exp\bigg\{\frac{\rho C^2_\eqref{maju2} (\int_0^T\sqrt{r}\gamma(r)dr)^2}{(1-q)^2}\left(\sup_{s\le 1}|W^1_s|+\delta\right)^2\bigg\}\bigg].
\end{align*}
 Using the fact that for each $k\in\{1,\hdots,n\}$,  $r\mapsto \gamma(r)$ is non-increasing on $[t_{k-1},t_k]$ while $r\mapsto\sqrt{r}$ is increasing, we obtain that \begin{align*}
   \int_0^T\sqrt{r}\gamma(r)dr&=\sum_{k=1}^n\int_{t_{k-1}}^{t_k}\sqrt{r}\gamma(r)dr\le \sum_{k=1}^n\frac{n}{T}\int_{t_{k-1}}^{t_k}\gamma(r)dr\int_{t_{k-1}}^{t_k}\sqrt{r}dr\\&=\frac{(m-1)T}{2mn}\int_0^T\sqrt{r}dr=\frac{(m-1)T^{5/2}}{3mn}.
\end{align*}

Applying the first assertion in Lemma \ref{Laplace_G_square} with $|H|=1$ and $\mu=\frac{\rho C^2_\eqref{maju2} (\int_0^T\sqrt{r}\gamma(r)dr)^2}{(1-q)^2}$, we deduce that if $0\le \rho\le \frac{9((1-q)mn)^2}{8C^2_\eqref{maju2} T^5(m-1)^2}$, then 
\begin{align}
   \EE&\left[\exp\left\{\frac{\rho(\frac{(m-1)T^2x}{2mn}+U\tba^\star)^2}{(1-q)^2}\right\}\right] \notag\\&\le\exp\left\{\frac{\rho C^2_\eqref{maju2} T^4(m-1)^2}{((1-q)mn)^2}\bigg(\frac{(3d+1)(|b(x_0)|+[\dot b]_\infty+\tilde x)^2}{4[\dot b]^2_\infty}+\frac{4d\sqrt{T}(|b(x_0)|+[\dot b]_\infty+\tilde x)}{3\sqrt{\pi}[\dot b]_\infty}+\frac{4dT\ln 2}{9}\bigg)\right\}.\label{UB:2}
\end{align}
$\bullet\,$ {\bf Conclusion}. We now choose $q=\frac{C_\eqref{cte:maj}[\dot b]_\infty \sqrt{3d(2m-1)/m}}{C_\eqref{cte:maj} [\dot b]_\infty\sqrt{3d(2m-1)/m}+C_\eqref{maju2}T^{3/2}\sqrt{2(m-1)/m}}$ to obtain the same constraint on $\rho$ for the two terms $U\ta_T$ and $U\tb_T$ and conclude by combining \eqref{UB:total}, \eqref{UB:1} and \eqref{UB:2} that if $0\le \rho\le \frac{9mn^2}{4T^2(m-1)\left(C_\eqref{cte:maj} [\dot b]_\infty\sqrt{3d(2m-1)/m}+C_\eqref{maju2}T^{3/2}\sqrt{2(m-1)/m}\right)^2}$,
\begin{align*}\EE\left[\exp\left\{ \rho (U_T^\star)^2\right\}\right]\le \exp\left\{\rho C_{\eqref{majufin}}(x)\frac{(m-1)T^2}{mn^2}\right\}.\end{align*}   
\end{proof}
\begin{remark}
\begin{itemize}
   \item When $n>[\dot b]_\infty T$, one could consider using the alternative expression 
$U\ta_{t_k}=\mathcal A^{1}_{k-1}\int_0^{t_k}\mathcal (\mathcal A^{1}_{\lfloor\frac{nt}{T}\rfloor})^{-1}\gamma(t)\nabla b(X^{mn}_t)dW_t$ to replace in the first step $U\taa^\star$ by
$$e^{[\dot b]_\infty T}\max_{1\le k\le n}\left|\int_0^{t_k}\mathcal (\mathcal A^{1}_{\lfloor\frac{nt}{T}\rfloor})^{-1}\gamma(t)\nabla b(X^{mn}_t)dW_t\right|.$$
Lemma \ref{aainv}, implies that for $k\in\{0,\hdots,n\}$, $\|(\mathcal A^{1}_{k})^{-1}\|\le (1-\frac{[\dot b]_\infty T}{n})^{-k}$. This leads to replace $C_{\eqref{cte:maj}}$ by some constant not smaller than 
$$e^{[\dot b]_\infty T}\left(\int_0^Te^{2[\dot b]_\infty t}dt\right)^{1/2}\ge e^{[\dot b]_\infty T}\sqrt{T}=\sqrt{T}\left(1+[\dot b]_\infty\int_0^Te^{[\dot b]_\infty(T- t)}dt\right)\ge C_{\eqref{cte:maj}}.$$
\item In the last step of the derivation of \eqref{UB:total}, one could choose a constant $\tilde{q}\in (0,1)$ different from $q$ to obtain
\begin{align*}
   \EE\left[\exp\left\{ \rho (U_T^\star)^2\right\}\right]\leq \EE^{\tilde{q}}\left[ \exp\left\{\frac{\rho (U^\star\taa)^2}{q\tilde{q}}\right\}\right] \EE^{1-\tilde{q}}\left[ \exp\left\{\frac{\rho (U^\star\tba)^2}{(1-q)(1-\tilde{q})}\right\}\right],\end{align*}
but, following the reasoning in the above proof, this leads to the same upper-bound but under a stronger constraint on $\rho$. Indeed, for a fixed value of $q\tilde{q}$, the maximal value of $(1-q)(1-\tilde{q})=1-(q+\tilde{q})+q\tilde{q}$ is attained for $q=\tilde{q}$.
\item On the other hand, if $q(t)$ is some probability density on $[0,T]$ and $(Y_t)_{t\in[0,T]}$ an $\R^d$-valued process, applying Jensen's inequality to the convex function $\R^d\ni x\mapsto \exp\left\{|x|^2\right\}$ leads to
$$\E\left[\exp\left\{\left|\int_0^TY_tq(t)dt\right|^2\right\}\right]\le \int_0^T\E\left[\exp\{|Y_t|^2\}\right]q(t)dt.$$
whereas applying Jensen's inequality to $\R^d\ni x\mapsto|x|^2$ then H\"older's inequality leads to the upper-bound $\exp\left\{\int_0^T\ln\E\left[\exp\{|Y_t|^2\}\right]q(t)dt\right\}$ which is smaller when $\E\left[\exp\{|Y_t|^2\}\right]$ is not constant $dt$ a.e.. Nevertheless, in the above proof, the repeated use of Jensen's inequality for $x\mapsto \exp\left\{(x)^2\right\}$ did not worsen the final estimation because this estimation relies on uniform in $t\in[0,T]$ bounds for $\E\left[\exp\{|Y_t|^2\}\right]$ . 
\end{itemize}
\end{remark}

\begin{proof}[Proof of Proposition \ref{properfort}]
By \eqref{defu-1} and Lemma \ref{aainv}, $$|U\ta_{T}|\le \frac{[\dot b]_\infty T}{n}\sum_{k=1}^{n-1}e^{[\dot b]_\infty(T-t_{k+1})}\left|\int_0^{t_{l}}\gamma(t)\nabla b(X^{mn}_t)dW_t\right|+\left|\int_0^{T}\gamma(t)\nabla b(X^{mn}_t)dW_t\right|.$$ The difference between the right-hand side and the definition \eqref{defu1} of $U^\star\taa$ is that in the latter $\left|\int_0^{T}\gamma(t)\nabla b(X^{mn}_t)dW_t\right|$ is replaced by $\max_{1\le k\le n}\left|\int_0^{t_k}\gamma(t)\nabla b(X^{mn}_t)dW_t\right|$.
Reasoning like in estimation of the first term in the proof of Theorem \ref{laplace1}, we obtain that for the probability measure $(p_k)_{1\le k\le n}$ introduced in this estimation, 
\begin{align}
   \EE\left[|U^1_T |^2\right]&\le  C_{\eqref{cte:maj}}^2\sum_{k=1}^n\frac{p_{k}}{t_k}\EE\bigg[ \bigg|\int_0^{t_{k}}\gamma(t)\nabla b(X^{mn}_t)dW_t\bigg|^2\bigg]\notag\\&=C_{\eqref{cte:maj}}^2\sum_{k=1}^n\frac{p_{k}}{t_k}\int_0^{t_{k}}\gamma(t)^2\Tr\left[\nabla b(X^{mn}_t) \nabla b(X^{mn}_t)^{\tp}\right]dt\notag\\
&\le C_{\eqref{cte:maj}}^2[\dot b]_\infty^2 d T^2\frac{(2m-1)(m-1)}{6(mn)^2}\label{estieu1}\end{align}
When estimating $\EE\left[(U^\star\taa)^2\right]$ one needs to apply Doob's inequality to deal with the last and different term. Modifying the probability by giving weights proportional to $2\sqrt{T}$ for $k=n$ and to $\frac{[\dot b]_\infty T e^{[\dot b]_\infty(T-t_{k+1})}\sqrt{t_{k}}}{n}$ for $1\le k\le n-1$  leads to
\begin{equation}
   \EE\left[(U^\star\taa)^2\right]\le  (C_{\eqref{cte:maj}}+\sqrt{T})^2[\dot b]_\infty^2 d T^2\frac{(2m-1)(m-1)}{6(mn)^2}.\label{estieustar1}
\end{equation}
On the other hand, by the Minkowski inequality and \eqref{defu2},
\begin{equation}
  \EE^{1/2}\left[(U\tba^\star )^2\right] \le e^{[\dot b]_{\infty}T}\left(\int_0^Te^{-[\dot b]_{\infty}s}\gamma(s)\E^{1/2}\left[\left|\nabla b(X^{mn}_s)b(X^{mn}_{\eta_{mn}(s)})+\frac{\Delta b}{2}(X^{mn}_s)\right|^2\right]ds\right)
\end{equation}
Using Lemma \ref{momXn} and that $\E^2\left[\sup_{r\le s}|W_r|\right]\le\E\left[\sup_{r\le s}W_r^2\right]\le \E\left[\sum_{i=1}^d\sup_{r\le s}(W^i_r)^2\right]=4d\times s$, we have
\begin{align*}
 &e^{-[\dot b]_{\infty}s}\E^{1/2}\left[\left|\nabla b(X^{mn}_s)b(X^{mn}_{\eta_{mn}(s)})+\frac{\Delta b}{2}(X^{mn}_s))\right|^2\right]\\&\le[\dot b]_{\infty}\left(2[\dot b]_\infty\sqrt{d\times s}+|b(x_0)|\right)+a_{\Delta b}\left(2\sqrt{d\times s}+\frac{1-e^{-[\dot b]_\infty s}}{[\dot b]_\infty}|b(x_0)|+e^{-[\dot b]_\infty s}\right)\\
&\le |b(x_0)|\left(a_{\Delta b}\frac{1-e^{-[\dot b]_\infty s}}{[\dot b]_\infty}+[\dot b]_\infty\right)+2([\dot b]_{\infty}^2+a_{\Delta b})\sqrt{d\times s}+a_{\Delta b}e^{-[\dot b]_\infty s}:=g(s)+a_{\Delta b}e^{-[\dot b]_\infty s}
\end{align*}
Using the fact that for each $k\in\{1,\hdots,n\}$, $s\mapsto g(s)$ is non-decreasing on $[t_{k-1},t_k)$ while $s\mapsto\gamma(s)$ is non-increasing and bounded by $\frac{(m-1)T}{mn}$, we get that
\begin{align*}
   &e^{-[\dot b]_{\infty}T}\EE^{1/2}\left[(U\tba^\star)^2\right] \\&\le \sum_{k=1}^n\frac{n}{T}\int_{t_{k-1}}^{t_k}\gamma(s)ds\int_{t_{k-1}}^{t_k}g(s)ds+a_{\Delta b}\frac{(m-1)T}{mn}\int_0^Te^{-[\dot b]_\infty s}ds\\&=\frac{(m-1)T}{2mn}\int_0^Tg(s)ds+a_{\Delta b}\frac{(m-1)T}{mn}\times\frac{1-e^{-[\dot b]_\infty T}}{[\dot b]_\infty}\notag\\
&\le \frac{(m-1)T}{mn}\left(\frac{|b(x_0)|}{2}\bigg(a_{\Delta b}\frac{[\dot b]_\infty T-1+e^{-[\dot b]_\infty T}}{[\dot b]^2_\infty}+[\dot b]_\infty T\bigg)+a_{\Delta b}\frac{1-e^{-[\dot b]_\infty T}}{[\dot b]_\infty}+\frac{2}{3}\sqrt{d}([\dot b]_{\infty}^2+a_{\Delta b})T^{3/2}\right).
\end{align*}
The estimation of $\E\left[|U_T|^2\right]$ (resp. $\E\left[|U^\star_T|^2\right]$) is obtained by plugging this inequality together with \eqref{estieu1} (resp. \eqref{estieustar1}) into the inequality $$\E\left[|U_T|^2\right]\le \frac{\E\left[|U\ta_T|^2\right]}{q}+\frac{\E\left[(U\tba^\star)^2\right]}{1-q}\mbox{ resp. }\E\left[|U^\star_T|^2\right]\le \frac{\E\left[|U\taa^\star|^2\right]}{q}+\frac{\E\left[(U\tba^\star)^2\right]}{1-q}\mbox{ for } q\in (0,1)$$and optimizing over $q$ : for $a,b>0$, $\min_{q\in (0,1)}\frac{a}{q}+\frac{b}{1-q}=(\sqrt{a}+\sqrt{b})^2$ attained for $q=\frac{\sqrt{a}}{\sqrt{a}+\sqrt{b}}$. \end{proof}


\section{Error expansion and  moment generating function of its Malliavin derivative}
\subsection{Basic facts on Malliavin calculus}

In this work, we follow the notations, definitions and results of \cite{Nua}. Let  $(W_t)_ {0\leq t\leq T}$ be a $d$-dimensional Brownian motion defined on the filtered 
probability space  $(\Omega,\mathcal F,(\mathcal F_t)_{t\geq 0},\PP)$.  Let $\D$ denote the Malliavin derivative 
operator taking values in the real separable Hilbert space $H:=L^2([0,T],\RR^d)$ whose norm is denoted by $|\cdot|_H$. More precisely, for $h\in H$, we denote $W(h)$ the Wiener integral $W(h)=\int_0^Th(t)\cdot dW_t$. Let $\mathcal S$ denote the class of random variables of the form $F=f(W(h^1),\cdots, W(h^n))$, for $n\geq 1$, with $(h^1,\cdots, h^n)\in H^{\otimes n}$  and  $f\in\mathscr C^{\infty}_p(\RR^n,\RR)$. Then, for $F\in\mathcal S$, the Malliavin derivative of $F$ denoted $\D F=(\D^i_tF, 0\leq t\leq T ,1\leq i\leq d)$ is defined by
$$
\D^i_tF=\sum_{k=1}^n\frac{\partial f}{\partial x_k}(W(h^1),\cdots,W(h^n))h^k_i(t),
$$
where $h_i^k$ denotes the $i$-th coordinate of $h^k$.
The operator $\D$ is closable as an operator from $L_p(\Omega)$ to   $L_p(\Omega,H)$, for any $p\geq 1$. Its domain with respect to the norm $|F|_{1,p}:= [\mathbb E(|F|^p)+
\mathbb E(|\D F|^p_{H})]^p$ is denoted by 
 $\mathbb D^{1,p}$.

We now state some essential  properties, which are going to be useful in the sequel.
\begin{prop}[Chain's rule]
Let $\phi\in\mathscr C^{1}(\RR^q,\RR)$ with bounded first order derivatives and $F=(F^1,\cdots,F^q)$ be an $\RR^q$-valued random vector with $F^k\in\mathbb D^{1,p}$ for $k=1,\dots,q$. Then,
$\phi(F)\in \mathbb D^{1,p}$ and for each $i=1,\dots,d$
$$
\D_t^i\phi(F)=\sum_{j=1}^q\frac{\partial \phi}{\partial x^j}(F)\D_t^iF^j.
$$
\end{prop}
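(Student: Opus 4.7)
The plan is to prove the chain rule first for cylindrical (smooth) random variables and then extend to $\mathbb D^{1,p}$ by an approximation/closability argument, which is the standard recipe for results of this type in Malliavin calculus.

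\textbf{Step 1: Cylindrical case.} Suppose first that each component $F^k$ lies in the class $\mathcal S$ of smooth cylindrical random variables. By enlarging the family of test vectors if necessary, we may write all the $F^k$ against a common tuple $(h^1,\dots,h^n)\in H^{\otimes n}$: $F^k = f^k(W(h^1),\dots,W(h^n))$ with $f^k\in\mathscr C^\infty_p(\RR^n,\RR)$. Then $\phi(F) = (\phi\circ f)(W(h^1),\dots,W(h^n))$, where $f=(f^1,\dots,f^q)$. Since $\phi\in\mathscr C^1(\RR^q,\RR)$ with bounded first derivatives, $\phi\circ f$ is $\mathscr C^1$ with derivatives of at most polynomial growth, so $\phi(F)$ belongs to the domain of $\D$ with
\[
\D^i_t\phi(F) = \sum_{k=1}^n \partial_{x_k}(\phi\circ f)(W(h^1),\dots,W(h^n))\, h^k_i(t) = \sum_{j=1}^q \tfrac{\partial\phi}{\partial x^j}(F)\,\D^i_t F^j
\]
by the ordinary chain rule for partial derivatives on $\RR^n$ and the definition of $\D$ for smooth functionals.

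\textbf{Step 2: Approximation.} For $F^k\in\mathbb D^{1,p}$, pick sequences $(F^k_\ell)_\ell\subset\mathcal S$ with $F^k_\ell\to F^k$ in $L^p(\Omega)$ and $\D F^k_\ell\to\D F^k$ in $L^p(\Omega;H)$; this is possible by the definition of $\mathbb D^{1,p}$ as the closure of $\mathcal S$ under the $|\cdot|_{1,p}$ norm. Set $F_\ell=(F^1_\ell,\dots,F^q_\ell)$. Because $\phi$ is Lipschitz (bounded first derivatives), $|\phi(F_\ell)-\phi(F)|$ is controlled by $|F_\ell-F|$, hence $\phi(F_\ell)\to \phi(F)$ in $L^p(\Omega)$. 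Step 1 applied to $F_\ell$ gives
\[
\D^i_t\phi(F_\ell)=\sum_{j=1}^q \tfrac{\partial\phi}{\partial x^j}(F_\ell)\,\D^i_t F^j_\ell.
\]

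\textbf{Step 3: Passage to the limit and closability.} The right-hand side converges to $\sum_{j=1}^q \tfrac{\partial\phi}{\partial x^j}(F)\,\D^i_t F^j$ in $L^p(\Omega;H)$: by continuity of $\partial_{x^j}\phi$ and a.s. convergence along a subsequence of $F_\ell\to F$, $\tfrac{\partial\phi}{\partial x^j}(F_\ell)\to\tfrac{\partial\phi}{\partial x^j}(F)$ almost surely, and boundedness of $\nabla\phi$ provides a uniform bound, so the convergence of the products in $L^p(\Omega;H)$ follows from writing
\[
\tfrac{\partial\phi}{\partial x^j}(F_\ell)\D F^j_\ell - \tfrac{\partial\phi}{\partial x^j}(F)\D F^j = \tfrac{\partial\phi}{\partial x^j}(F_\ell)\bigl(\D F^j_\ell-\D F^j\bigr)+\bigl(\tfrac{\partial\phi}{\partial x^j}(F_\ell)-\tfrac{\partial\phi}{\partial x^j}(F)\bigr)\D F^j,
\]
then bounding the first term by $\|\nabla\phi\|_\infty\,|\D F^j_\ell-\D F^j|_H$ and dispatching the second by dominated convergence. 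Combined with $\phi(F_\ell)\to\phi(F)$ in $L^p$, the closability of the operator $\D$ yields $\phi(F)\in\mathbb D^{1,p}$ together with the announced formula.

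\textbf{Expected main obstacle.} The only subtle point is the $L^p(\Omega;H)$ convergence of $\tfrac{\partial\phi}{\partial x^j}(F_\ell)\,\D F^j$ toward $\tfrac{\partial\phi}{\partial x^j}(F)\,\D F^j$ in the second piece above: one cannot simply invoke boundedness of $\nabla\phi$ to swap limit and integral because the $H$-norm involves $\D F^j$. This is handled by passing to an almost-sure convergent subsequence of $F_\ell\to F$, applying the continuity of $\partial_{x^j}\phi$, noting $|\tfrac{\partial\phi}{\partial x^j}(F_\ell)-\tfrac{\partial\phi}{\partial x^j}(F)|\le 2\|\nabla\phi\|_\infty$ and using dominated convergence with the dominating function $2\|\nabla\phi\|_\infty|\D F^j|_H\in L^p(\Omega)$.
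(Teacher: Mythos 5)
The paper does not actually prove this proposition: it is stated without proof and, like the Clark--Ocone formula that follows it, is a quoted result from Nualart's book \cite{Nua} (cf.\ the sentence ``we follow the notations, definitions and results of \cite{Nua}''). Your strategy---prove the formula for cylindrical random variables, then approximate and invoke closability---is the standard one and is essentially what the cited reference does, but there is a gap in your Step 1. The class $\mathcal S$, both as defined in the paper and in \cite{Nua}, consists of random variables $f(W(h^1),\dots,W(h^n))$ with $f\in\mathscr C^\infty_p$. Since $\phi$ is only $\mathscr C^1$, the composite $\phi\circ f$ is merely $\mathscr C^1$ of polynomial growth, hence $\phi(F)$ is \emph{not} an element of $\mathcal S$, and the assertion ``so $\phi(F)$ belongs to the domain of $\D$'' is precisely the membership statement you are trying to establish rather than something that can be read off from the definition of $\D$ on $\mathcal S$.

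The fix your outline omits is a mollification of $\phi$: set $\phi_\epsilon=\phi*\rho_\epsilon$ for a compactly supported smooth bump $\rho_\epsilon$. Then $\phi_\epsilon\in\mathscr C^\infty$ with $\|\nabla\phi_\epsilon\|_\infty\le\|\nabla\phi\|_\infty$ and at most linear growth, so $\phi_\epsilon\circ f\in\mathscr C^\infty_p$, hence $\phi_\epsilon(F)\in\mathcal S$ and the chain rule for $\phi_\epsilon$ follows from the ordinary chain rule as you write. Since $\phi_\epsilon\to\phi$ uniformly on compacts, $\nabla\phi_\epsilon\to\nabla\phi$ pointwise, and the gradients are uniformly bounded, you can send $\epsilon\to 0$ using closability and dominated convergence exactly as in your Step 3. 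Once this extra approximation layer is inserted (either before or merged with Step 2), the remainder of your argument---the approximation of $F^k\in\mathbb D^{1,p}$ by cylindrical random variables, the splitting of the difference of products, the a.s.\ subsequence plus dominated convergence device, and the appeal to closability of $\D$---is correct and complete.
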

\begin{prop}[Clark-Ocone formula]
Let $F$ be a $\mathcal F_T$-measurable random variable that belongs to $\mathbb D^{1,p}$ for some $p\ge 1$. Then, 
$$
F=\mathbb E(F)+\sum_{i=1}^d\int_0^T\mathbb E(\D^i_s F|\mathcal F_s)dW^i_s,\quad \mbox{a.s.}
$$ 
\end{prop}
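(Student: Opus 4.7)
The plan is to first establish the formula for $F \in \mathbb D^{1,2}$ via the Wiener--It\^o chaos decomposition, and then extend to general $p \ge 1$ by an approximation argument using the cylinder class $\mathcal S$. Any $F \in L^2(\Omega, \mathcal F_T)$ admits the orthogonal decomposition $F = \sum_{n \ge 0} I_n(f_n)$ with symmetric kernels $f_n \in L^2([0,T]^n;(\R^d)^{\otimes n})$, where $I_n$ denotes the $n$-fold multiple Wiener--It\^o integral with respect to $W$. For $F \in \mathbb D^{1,2}$ the series $\sum_n n \cdot n!\, |f_n|^2_{L^2}$ converges, and the Malliavin derivative acts chaos by chaos via $\D_s^i I_n(f_n) = n\, I_{n-1}\bigl(f_n(\cdot, s)\bigr)_i$, a formula I would verify first on $\mathcal S$ and then extend by closability of $\D$.

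Next, the time-ordered iterated integral representation of $I_n(f_n)$ identifies the predictable integrands in the martingale representation of $M^n_t := \EE[I_n(f_n)\mid \mathcal F_t]$: its $i$-th integrand is exactly $n\,\EE[I_{n-1}(f_n(\cdot,s))_i \mid \mathcal F_s]$, which by the previous step equals $\EE[\D_s^i I_n(f_n) \mid \mathcal F_s]$. Summing over $n$, using orthogonality of chaoses and It\^o's isometry, yields
\[
F = \EE[F] + \sum_{i=1}^d \int_0^T \EE[\D_s^i F \mid \mathcal F_s]\, dW_s^i \qquad \text{in } L^2(\Omega),
\]
which settles the case $p = 2$.

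To extend to $p \neq 2$: for $p > 2$ the embedding $\mathbb D^{1,p} \hookrightarrow \mathbb D^{1,2}$ already puts us in the previous case, and conditional Jensen applied to $|\D F|_H^p$ gives the required $L^p$-integrability of the right-hand side. For $p \in [1,2)$, I would approximate $F$ in $|\cdot|_{1,p}$ by a sequence $F_k \in \mathcal S$; each $F_k$ satisfies Clark--Ocone (either directly from an It\^o-formula computation applied to $\EE[F_k\mid \mathcal F_t]$, or via Step~2 since $\mathcal S \subset \mathbb D^{1,2}$), and convergence of both sides follows from the Burkholder--Davis--Gundy inequality combined with the $L^p$-contractivity of conditional expectation.

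The delicate point is the endpoint $p=1$: the stochastic integral on the right-hand side is not automatically defined for an arbitrary $L^1$ integrand, and Burkholder--Davis--Gundy with exponent $1$ requires the Davis form. I would handle this by combining Davis's inequality with conditional Jensen applied pointwise in $s$ to obtain a bound of the type
\[
\EE\Bigl[\sup_{t\le T}\Bigl| \textstyle\int_0^t \EE[\D_s F \mid \mathcal F_s]\, dW_s \Bigr|\Bigr] \;\lesssim\; \EE\bigl[|\D F|_H\bigr],
\]
which ensures closability of both sides under the $|\cdot|_{1,1}$ norm and allows the approximation to conclude. The rest is routine passage to the limit.
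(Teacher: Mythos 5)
The paper does not prove this proposition; it is quoted as a standing fact with reference to Nualart's book (which establishes the $\mathbb D^{1,2}$ case), so there is no paper-internal argument to compare against. Your chaos-decomposition argument for $p=2$ is the standard and correct proof, and the embedding $\mathbb D^{1,p}\hookrightarrow\mathbb D^{1,2}$ settles $p>2$ on a probability space.

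However, your treatment of $p\in[1,2)$ has a real gap at $p=1$. The bound you propose,
\[
\EE\Bigl[\sup_{t\le T}\Bigl|\int_0^t\EE[\D_s F\mid\mathcal F_s]\cdot dW_s\Bigr|\Bigr]\;\lesssim\;\EE\bigl[|\D F|_H\bigr],
\]
does not hold. After Davis's inequality and conditional Jensen pointwise in $s$ you are left needing
\[
\EE\Bigl[\bigl(\textstyle\int_0^T(\EE[|\D_s F|\mid\mathcal F_s])^2\,ds\bigr)^{1/2}\Bigr]\;\lesssim\;\EE\Bigl[\bigl(\textstyle\int_0^T|\D_s F|^2\,ds\bigr)^{1/2}\Bigr],
\]
which is the endpoint $p=1$ of Stein's inequality for optional projections on $L^p(\Omega;L^2([0,T]))$. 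That inequality holds for $1<p<\infty$ but fails at $p=1$, for the same reason Doob's maximal inequality fails in $L^1$: taking $\D_s F\equiv Z$ constant in $s$, the left side is comparable to $\EE[\|\EE[|Z|\mid\mathcal F_\cdot]\|_{L^2(ds/T)}]$, which can be infinite even when $\EE|Z|<\infty$, while the right side is $\sqrt T\,\EE|Z|$. So ``$L^p$-contractivity of conditional expectation'' (a scalar statement) is not the right tool here; you need the vector-valued Stein inequality, and it genuinely breaks at $p=1$. For $p\in(1,2)$ invoking Stein's inequality explicitly would close the argument, but the $\mathbb D^{1,1}$ case requires a different route: the Karatzas--Ocone--Li extension proceeds by localization, using that the martingale $t\mapsto\EE[F\mid\mathcal F_t]$ and the candidate integrand have predictable quadratic variations that are a.s.\ finite, then stopping to reduce to the $L^2$ case and passing to the limit in probability rather than in $L^1$. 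Either restrict the statement to $p\ge2$ (which is all the paper actually uses, since it applies the formula to $f(X^n_T)\in\mathbb D^{1,\infty}$), or invoke Stein's inequality for $p\in(1,2)$ and cite Karatzas--Ocone--Li for $p=1$.
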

A preliminary essential result is on the boundedness of Malliavin's derivative of the diffusion and its Euler scheme given by \eqref{euler} 
\begin{lemma}
\label{boundMal}
Let $X = (X_t)_{ t \in[0, T]}$ be the solution to Eq. \eqref{1},
where the coefficient $b$ satisfies our  global Lipschitz condition  $(\mathcal H_{GL})$. Then, $X_t, X^n_t\in\mathbb D^{1,\infty}:=\cap_{p\geq 1}\mathbb D^{1,p}$ for any $t\in[0,T]$. 
Moreover, assume  that $b\in\mathscr C^1(\RR^d,\RR^d )$
with  $\|\nabla b\|\leq [\dot b]_{\infty}$ for some finite constant $[\dot b]_{\infty}$.  Then,   for all $1\leq j\leq d$ we have
 $$
\forall r,t\in[0,T],\;|\D^j_rX_{t} |\vee \sup_{n\in\NN^*}|\D^j_rX^{n}_{t} |) \leq \mathbf 1_{\{r\leq t\}} e^{[\dot b]_{\infty}(t-r)} .
$$
\end{lemma}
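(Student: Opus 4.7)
The plan has two stages: establish the $\mathbb D^{1,\infty}$-membership of $X_t$ and $X^n_t$, and then derive the pathwise exponential bound on the Malliavin derivatives, which amounts to a linear SDE argument in the continuous case and a linear recurrence in the discrete case.

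For $X^n_t$, the continuous Euler scheme satisfies $X^n_t=X^n_{\eta_n(t)}+(t-\eta_n(t))b(X^n_{\eta_n(t)})+W_t-W_{\eta_n(t)}$, so it is obtained recursively as a Lipschitz function of the Brownian increments. Combined with the Lipschitz property of $b$ from $(\mathcal H_{GL})$ and a smoothing argument, each $X^n_t$ belongs to $\mathbb D^{1,p}$ for every $p\ge 1$. For the SDE, $\mathbb D^{1,\infty}$-membership under only $(\mathcal H_{GL})$ is Theorem 2.2.1 of \cite{Nua}, established by approximating $b$ by smooth functions with bounded derivatives, deriving uniform $L^p$ bounds on the Malliavin derivatives from the linear SDE they satisfy, and passing to the limit. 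I would quote this result rather than reprove it; it is the only mildly delicate point of the lemma.

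Under the additional $\mathscr C^1$ assumption with $\|\nabla b\|\le[\dot b]_\infty$, applying the chain rule to \eqref{1} and using that the diffusion coefficient is the identity (so that $\D^j_r W_t=e_j\mathbf 1_{\{r\le t\}}$) gives
$$\D^j_rX_t=e_j\mathbf 1_{\{r\le t\}}+\int_r^t\nabla b(X_s)\D^j_rX_s\,ds,\qquad t\in[0,T],$$
where $e_j$ denotes the $j$-th canonical basis vector and $\D^j_rX_s=0$ for $s<r$. Taking Euclidean norms and applying Gronwall's lemma on $[r,T]$ yields $|\D^j_rX_t|\le\mathbf 1_{\{r\le t\}}e^{[\dot b]_\infty(t-r)}$.

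For the Euler scheme I would proceed by induction on the grid $t_k=kT/n$. Differentiating the recurrence $X^n_{t_{k+1}}=X^n_{t_k}+(T/n)b(X^n_{t_k})+W_{t_{k+1}}-W_{t_k}$ gives
$$\D^j_rX^n_{t_{k+1}}=\bigl(I_d+(T/n)\nabla b(X^n_{t_k})\bigr)\D^j_rX^n_{t_k}+e_j\mathbf 1_{\{t_k<r\le t_{k+1}\}}.$$
If $k_0$ is the unique index with $t_{k_0-1}<r\le t_{k_0}$, then $\D^j_rX^n_{t_k}=0$ for $k<k_0$ (since $X^n_{t_k}$ is $\mathcal F_{t_k}$-measurable and $t_k<r$), $|\D^j_rX^n_{t_{k_0}}|=1$, and, using $\|I_d+(T/n)\nabla b(X^n_{t_k})\|\le 1+[\dot b]_\infty T/n\le e^{[\dot b]_\infty T/n}$, the induction yields $|\D^j_rX^n_{t_k}|\le e^{[\dot b]_\infty(t_k-t_{k_0})}\le e^{[\dot b]_\infty(t_k-r)}$ for every $k\ge k_0$. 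For an arbitrary $t\in[0,T]$, I would differentiate the interpolation identity recalled in the second paragraph and combine it with the grid-point bound, distinguishing the cases $r\le\eta_n(t)$ and $\eta_n(t)<r\le t$. All steps beyond the invocation of \cite{Nua} reduce to direct Gronwall-type computations and require no further ingredient.
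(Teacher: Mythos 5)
Your proof is correct and follows essentially the same route as the paper: cite Nualart for $\mathbb D^{1,\infty}$-membership, then differentiate the (Euler) recursion and use a linear recurrence / Gronwall argument with $\|I_d+(T/n)\nabla b\|\le e^{[\dot b]_\infty T/n}$ to obtain the exponential bound. The only cosmetic difference is that the paper writes out the resulting matrix product explicitly rather than phrasing it as an induction, and it treats the SDE case as "straightforward" where you spell out the Gronwall step.
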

\begin{proof} 
Under our assumptions,  for any $t\in[0,T]$, the random variables  $X_t, X^n_t$ belong to $\mathbb D^{1,\infty}$ (see \cite{Nua}, Section 2.2).  The estimation of the Malliavin derivative of $X_t$ is straightforward. We only give a proof for the estimation of the Malliavin derivative of the Euler scheme. For $r,t\in[0,T]$,
\begin{equation}
   \D^j_rX^{n}_t=\begin{cases}
      0\mbox{ if }t<r,\\
\mathbf e_j\mbox{ if }r\le t\le \lceil \frac{rn}{T}\rceil\frac{ T}{n}\\
\mathbf e_j +\int_{ \lceil \frac{rn}{T}\rceil\frac{ T}{n} }^t \nabla b(X^{n}_{\eta_{{n}}(u)})\D^j_rX^{n}_{\eta_{{n}}(u)}du\mbox{ if }t\ge \lceil \frac{rn}{T}\rceil\frac{ T}{n}
   \end{cases},\label{dermal}
\end{equation}
where $(\mathbf e_j)_{j=1,\dots,d}$ denotes the canonical basis of $\RR^d$.
Hence $\D^j_rX^{n}_{\lceil \frac{rn}{T}\rceil\frac{T}{n}}=\mathbf e_j$ and for $t\ge\lceil \frac{rn}{T}\rceil\frac{ T}{n}$, 
$$\forall k\in\left\{ \left\lceil \frac{rn}{T}\right\rceil,\dots,\left\lfloor\frac{tn}{T}\right\rfloor \right\},\;\D^j_rX^{n}_{t\wedge \frac{(k+1)T}{n}}=\D^j_rX^{n}_{\frac{kT}{n}}+\left(t\wedge \frac{(k+1)T}{n}-\frac{kT}{n}\right)\nabla b(X^{n}_{\frac{kT}{n}})\D^j_rX^{n}_{\frac{kT}{n}}.$$
so that
$$
\D^j_rX^{n}_{t}=\left(I_d+(t-\eta_n(t))\nabla b(X^{n}_{\eta_n(t)})\right)
\prod^{\left\lceil \frac{rn}{T}\right\rceil}_{k=\left\lfloor\frac{tn}{T}\right\rfloor-1}
\left(I_d+\frac{T}{n}\nabla b(X^{n}_{\frac{kT}{n}})\right)\mathbf e_j.
$$
Using the boundedness of the  first order derivatives of $b$, we deduce that for $t\ge\lceil \frac{rn}{T}\rceil\frac{ T}{n}$,
$$
|\D^j_rX^{n}_{t} |\leq \left(1+(t-\eta_n(t))[\dot b]_{\infty} \right)\left(1+\frac{T}{n} [\dot b]_{\infty} \right)^{\left\lfloor\frac{tn}{T}\right\rfloor-\left\lceil \frac{rn}{T}\right\rceil}\leq e^{[\dot b]_{\infty}(t-\lceil \frac{rn}{T}\rceil\frac{ T}{n}) }.
$$
\end{proof}
\subsection{Moment generating function of~${\displaystyle \max_{1\le k\le n}}|\D^j_rX^{mn}_{t_k}-\D_r^j X^n_{t_k}|^2$}
The next theorem states an exponential type upper bound for  the moment generating function  of ${\displaystyle \max_{1\le k\le n}}|\D^j_rX^{mn}_{t_k}-\D_r^j X^n_{t_k}|^2$. In what follows, we refer to  constants notation {\bf (CN)} introduced in Section \ref{subsec:momgen}.
\begin{theorem}\label{laplace2}
Let assumption ${\mathbf (\mathbf R\mathbf 2)}$ hold, $(n,m)\in {\mathbb N}^*\times\bar{\mathbb N}$, $t_k=\frac{kT}{n}$ for $k\in\{0,\hdots,n\}$  and $\rho$  be a constant satisfying  $0\le \rho\leq e^{-2[\dot b]_\infty(T-r)}\hat\rho(r)n^2$. 
Then,
\begin{align}\label{Th_Laplace_DU}
\EE&\left[  \exp\left\{ \rho \max_{0\le k\le n}|\D^j_rX^{mn}_{t_k}-\D^j_rX^{n}_{t_k}|^2\right\}\right]\leq
 \exp\left\{\rho e^{2[\dot b]_\infty(T-r)}\Phi(r,[\dot b]_\infty)\frac{(m-1)T^2}{mn^2}\right\}.
\end{align}
\end{theorem}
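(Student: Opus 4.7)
The plan is to mirror the strategy used for Theorem \ref{laplace1}, but applied to the difference of Malliavin derivatives
$$\delta_k := \D^j_r X^{mn}_{t_k} - \D^j_r X^n_{t_k}, \qquad \delta^\star := \max_{0\le k\le n}|\delta_k|.$$
Under ${\mathbf (\mathbf R\mathbf 2)}$, differentiating the Euler recursion gives $\D^j_r X^n_{t_{k+1}} = \bigl(I_d + (T/n)\nabla b(X^n_{t_k})\bigr)\D^j_r X^n_{t_k}$ for $t_k\ge \hat\eta_n(r)$, and a matching identity on the finer grid yields a recursion of the form $\delta_{k+1} = B_k\delta_k + E^{(1)}_k + E^{(2)}_k$, where $B_k$ is a matrix satisfying $\|B_k\|\le 1+(T/n)[\dot b]_\infty$ (obtained by gathering the $\delta_k$-terms exactly as $A_k$ was introduced in the proof of Theorem \ref{laplace1}), the error $E^{(1)}_k = (T/n)(\nabla b(X^{mn}_{t_k}) - \nabla b(X^n_{t_k}))\D^j_r X^n_{t_k}$ captures the trajectory error $U_{t_k}$ through $[\ddot b]_\infty$, and $E^{(2)}_k = \int_{t_k}^{t_{k+1}}\bigl(\nabla b(X^{mn}_{\eta_{mn}(u)})\D^j_r X^{mn}_{\eta_{mn}(u)} - \nabla b(X^{mn}_{t_k})\D^j_r X^{mn}_{t_k}\bigr)\,du$ captures the finer-grid discretization.

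Iterating the recursion (as in \eqref{formu}) produces $\delta_k = \sum_{l<k}\mathcal B^{l+1}_{k-1}\bigl(E^{(1)}_l + E^{(2)}_l\bigr)$ with $\|\mathcal B^{l+1}_{k-1}\|\le e^{[\dot b]_\infty(t_k-t_l)}$ by the submultiplicative argument of Lemma \ref{aainv}. Applying Itô's formula to the product $\nabla b(X^{mn}_s)\D^j_r X^{mn}_s$ on each sub-interval of $[\eta_{mn}(u),u]$ splits $E^{(2)}_l$ into a stochastic integral (integrand controlled by $[\ddot b]_\infty$, since $\nabla(\nabla b\cdot v)$ involves $\partial\nabla b/\partial x_j$) and a deterministic integral (involving $\nabla\Delta b$ and $[\ddot b]_\infty\nabla b\,b$, controlled via assumption \eqref{affgradlapb} and Lemma \ref{momXn}). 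This yields a three-part decomposition $\delta_k = T^{(1)}_k + T^{(2)}_k + T^{(3)}_k$, and by Lemma \ref{aainv} one gets a pointwise majoration
$$\delta^\star \le e^{[\dot b]_\infty(T-r)}\bigl(S^{(1)} + S^{(2)} + S^{(3)}\bigr),$$
where $S^{(1)}$ is linear in $U^\star$ and Lipschitz-in-$\nabla b$ (hence bounded via Theorem \ref{laplace1}, giving the rate $\rho_{\eqref{rho:lap1}}$ and the factor $\Phi_1(r)$); $S^{(2)}$ is a deterministic integral of a $(|b(x_0)|+[\dot b]_\infty+\sup|W|)$-type quantity on $[r,T]$ with kernel $e^{[\dot b]_\infty s}$ (giving $\rho_{\eqref{DUB:3}}$ and $\Phi_2(r)$ through assumption \eqref{affgradlapb} and the logarithmic Sobolev inequality behind Lemma \ref{Laplace_G_square}); and $S^{(3)}$ is an Itô integral on $[r,T]$ whose integrand is bounded by $[\ddot b]_\infty$ times a deterministic function of $s-r$, giving $\rho_{\eqref{DUB:2}}$ and $\Phi_3(r)$ after a direct application of the first assertion of Lemma \ref{Laplace_G_square}.

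To aggregate, I would write $|S^{(1)}+S^{(2)}+S^{(3)}|^2\le \sum_i |S^{(i)}|^2/p_i$ by Jensen with $p_1+p_2+p_3=1$, then apply Hölder with exponents $(q_1,q_2,q_3)$, $q_1+q_2+q_3=1$, so that
$$\EE\bigl[\exp(\rho\,(\delta^\star)^2)\bigr] \le \prod_{i=1}^3 \EE\bigl[\exp\bigl(\rho\,e^{2[\dot b]_\infty(T-r)}|S^{(i)}|^2/(p_iq_i)\bigr)\bigr]^{q_i}.$$
Each factor admits an exponential bound of the shape $\exp\bigl(\rho\,e^{2[\dot b]_\infty(T-r)}\Phi_i^2(r)\Psi_i/(p_iq_i)\bigr)$ as long as $\rho\,e^{2[\dot b]_\infty(T-r)}\Phi_i^2(r) \le p_iq_i\,\rho^\star_i$, with $\rho^\star_i\in\{\rho_{\eqref{rho:lap1}},\rho_{\eqref{DUB:3}},\rho_{\eqref{DUB:2}}\}$ and $\Psi_i$ the corresponding $C_{\eqref{majufin}}(0)$, $\phi_2(r,[\dot b]_\infty)$ and $C_{\eqref{DUB:2}}\Phi_3^2(r)$ pieces appearing in $\Phi(r,x)$. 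The joint optimal choice $p_iq_i \propto \Phi_i^2(r)/\rho^\star_i$ simultaneously yields the least restrictive constraint on $\rho$, which is precisely $\rho\le e^{-2[\dot b]_\infty(T-r)}\hat\rho(r)n^2$ via the identity $\min_{\sum q_i=1}\sum c_i/q_i = (\sum\sqrt{c_i})^2$ underlying the denominator in $\hat\rho(r)$, and the exponent collapses to $\rho\,e^{2[\dot b]_\infty(T-r)}\Phi(r,[\dot b]_\infty)(m-1)T^2/(mn^2)$.

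The main obstacle is the clean derivation of the three-term decomposition of $E^{(2)}_l$: the Itô expansion of $\nabla b(X^{mn}_s)\D^j_r X^{mn}_s$ creates cross-terms between $\partial\nabla b/\partial x_j$ and both $b$ and $dW$, plus a $\nabla\Delta b$ piece, and one must verify that their deterministic moment bounds assemble into the exact functions $\Phi_2,\Phi_3$ with the correct starting time $r$ (recall $\D^j_r X^{mn}_s=0$ for $s<r$). Once this bookkeeping is carried out — in particular using the periodicity of $\gamma$ and the bound $\|\mathcal B^{l+1}_{k-1}\|\le e^{[\dot b]_\infty(t_k-t_l)}$ — the three-way Hölder optimization is a routine algebraic computation.
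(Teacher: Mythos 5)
Your overall strategy — decompose $\D^j_r U_{t_k}$ into a piece linear in $U^\star_T$, a deterministic integral, and a stochastic integral, then bound each with Lemma~\ref{Laplace_G_square}/Theorem~\ref{laplace1}, and aggregate via Jensen and H\"older with a three-way optimization of the weights to produce $\hat\rho(r)$ and $\Phi(r,[\dot b]_\infty)$ — is exactly the paper's approach, and you have correctly identified that the conjugate exponents should be chosen to equalize the constraints, giving the sum-of-square-roots structure in the denominator of $\hat\rho$. However, three points in your sketch are genuine gaps that would bite if you executed it as written.

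First, your iterated formula $\delta_k=\sum_{l<k}\mathcal B^{l+1}_{k-1}(E^{(1)}_l+E^{(2)}_l)$ omits the nonzero initial value $\D^j_r U_{t_{\lceil rn/T\rceil}}=\int_{\hat\eta_{mn}(r)}^{\hat\eta_n(r)}\nabla b(X^{mn}_{\eta_{mn}(s)})\D^j_r X^{mn}_{\eta_{mn}(s)}ds$, which is $O((m-1)T/(mn))$ and therefore of the same order as everything else. The paper tracks this as $\D^j_r U^{(0)}_{t_k}=\mathcal B^{\lceil rn/T\rceil}_{k-1}\D^j_r U_{t_{\lceil rn/T\rceil}}$, bounds it pointwise by $[\dot b]_\infty e^{[\dot b]_\infty(T-r)}(m-1)T/(mn)$, and then — crucially — combines it with the deterministic-integral piece $\D^j_r U^{(2)}$ inside the second Jensen slot, using the shift $x=[\dot b]_\infty$ in $\phi_2(r,x)$. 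Without this the exponent in~\eqref{Th_Laplace_DU} would have $\Phi(r,0)$ rather than $\Phi(r,[\dot b]_\infty)$, and the bound would simply be wrong.

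Second, your $B_k$ is mischaracterized. You say it is "obtained by gathering the $\delta_k$-terms exactly as $A_k$ was introduced", but the $A_k$ construction uses the normalized outer product $\bigl(b(X^{mn}_{t_k})-b(X^n_{t_k})\bigr)U_{t_k}^\top/|U_{t_k}|^2$ precisely because $b$ is nonlinear. The Malliavin-derivative recursion is already linear in $\D^j_r X^n_{t_k}$, so the paper just takes $B_k=I_d+\frac{T}{n}\nabla b(X^{mn}_{t_k})$, and the coupling error $E^{(1)}_k=\frac Tn(\nabla b(X^{mn}_{t_k})-\nabla b(X^n_{t_k}))\D^j_r X^n_{t_k}$ is re-expressed through the auxiliary tensor $H^b_{t_k}$ to expose the factor $U_{t_k}$. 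These are two distinct tricks; conflating them obscures why the first piece is controlled by $U^\star_T$ via $[\ddot b]_\infty$.

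Third, the iterated form you display is the naive $\sum_l\mathcal B^{l+1}_{k-1}(\ldots)$, which — as the paper emphasizes already for the non-Malliavin case — produces a non-adapted integrand when one isolates the stochastic-integral contribution, because $\mathcal B^{l+1}_{k-1}$ looks into the future of $t_l$. You do reference \eqref{formu}, which is the right fix, but the analogue here (formula~\eqref{DU:res1}) requires the additional $V/\tilde V$ splitting and the insertion of the $(B_l-I_d)$ factors before the stochastic integrals; the naive formula is only legitimate for the deterministic part. As stated, your $S^{(3)}$ would not be a well-defined It\^o integral.

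None of these defects concerns the final analytic step (the three-way H\"older optimization leading to $\hat\rho$ and $\Phi$), which you have right. But the bookkeeping you describe as "routine" is exactly where the $U^{(0)}$ term and the $V/\tilde V$ device live, and skipping either would not recover the stated bound.
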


\begin{proof}
Let $j\in\{1,\hdots,d\}$ and $r\in[0,T]$. By \eqref{dermal}, for $k\in\{0,\hdots,\lfloor\frac{rn}{T}\rfloor\}$, $\D^j_rU_{t_k}=0$, \begin{equation}
   \D^j_rU_{t_{{\lceil \frac{rn}{T}\rceil }}}= \int_{\hat\eta_{{mn}}(r)}^{\hat\eta_{{n}}(r)} \nabla b(X^{mn}_{\eta_{{mn}}(s)})\D^j_rX^{mn}_{\eta_{{mn}}(s)}ds,\label{djuinit}
\end{equation} and for $k\in\{\lceil \frac{rn}{T}\rceil,\hdots,n-1\}$, \begin{align*}
 \D^j_rU_{t_{k+1}} =&\D^j_rU_{t_{k}}+ \frac{T}{n}\nabla b(X^{mn}_{t_{k}})\D^j_rU_{t_{k}}+ \frac{T}{n}(\nabla b(X^{mn}_{t_{k}})-\nabla b(X^{n}_{t_{k}}))\D^j_rX^n_{t_k}\\&+\int_{t_k}^{t_{k+1}} \nabla b(X^{mn}_{_{\eta_{{mn}}(s)} })\D^j_rX^{mn}_{_{\eta_{{mn}}(s)} }-\nabla b(X^{mn}_{t_{k} })\D^j_rX^{mn}_{t_{k} }ds.
\end{align*}
Setting $$
B_{k}:=I_d+\frac{T}{n}\nabla b(X^{mn}_{t_{k}}),
$$
and defining $( V^{(r,j)}_{t_{k}}  )_{\lceil \frac{rn
}{T}\rceil\leq k\leq n}$ inductively by $V^{(r,j)}_{t_{{\lceil \frac{rn
}{T}\rceil }}}=0$ and 
\begin{align}\label{rec:v}
 V^{(r,j)}_{t_{k+1}}-V^{(r,j)}_{t_{k}}&=\frac{T}{n}\left(  \nabla b(X^{mn}_{t_k})-\nabla b(X^{n}_{t_k})\right)\D^j_rX^{n}_{t_{k}}\notag\\
&\phantom{\frac{T}{n}}+\int_{t_k}^{t_{k+1}} \nabla b(X^{mn}_{_{\eta_{{mn}}(s)} })\D^j_rX^{mn}_{_{\eta_{{mn}}(s)} } -\nabla b(X^{mn}_{t_{k} })\D^j_rX^{mn}_{t_k}ds,\end{align}
we deduce that for $k\in\{\lceil \frac{rn}{T}\rceil,\hdots,n-1\}$,
\begin{align*}\label{rec:uv}
\D^j_rU_{t_{k+1}} -V^{(r,j)}_{t_{k+1}} &=B_{ k} \D^j_rU_{t_{k}}- V^{(r,j)}_{t_{k}}=  B_{ k} (\D^j_rU_{t_{k}}-  V^{(r,j)}_{t_{k}})
 +(B_{ k}-I_d) V^{(r,j)}_{t_{k}},
\end{align*}
equality similar to \eqref{recurs}.
Let us introduce
\begin{equation*}
\forall l\le k,\;\mathcal B^{l}_{k}:= B_{k}B_{k-1}\hdots B_{l}\mbox{ and  }\mathcal B^{k+1}_{k}=I_d.
\end{equation*}
One can check by induction on $k$ that for all $k\in\{\lceil \frac{rn
}{T}\rceil
,\hdots,n\}$ and any sequence $(\tilde V^{(r,j)}_{t_{k}}  )_{\lceil \frac{rn
}{T}\rceil\leq k\leq n}$ such that $\tilde V^{(r,j)}_{t_{\lceil \frac{rn
}{T}\rceil}}=0$, we have 
\begin{align}\label{DU:res1}
 D^j_r  U_{t_{k}}=& \mathcal B_{k-1}^{{\lceil \frac{rn
}{T}\rceil } }D^j_rU_{t_{{\lceil \frac{rn
}{T}\rceil }}}+\sum_{l={\lceil \frac{rn
}{T}\rceil }}^{k-1} \mathcal B_{k-1}^{l+1}(\tilde V^{(r,j)}_{t_{l+1}}-\tilde V^{(r,j)}_{t_{l}})\notag\\&+\sum_{l={\lceil \frac{rn
}{T}\rceil }+1}^{k-1} \mathcal B_{k-1}^{l+1}(B_{l}-I_d)(V^{(r,j)}_{t_{l}}-\tilde{V}^{(r,j)}_{t_{l}})+V^{(r,j)}_{t_{k}}-\tilde V^{(r,j)}_{t_{k}}.
\end{align}
Let us explicit the right-hand side of \eqref{rec:v}. We have $$
\frac{T}{n} \left(  \nabla b(X^{mn}_{t_{k}})-\nabla b(X^{n}_{t_{k}})\right)\D^j_rX^{n}_{t_{k}}   =
\frac{T}{n} H^b_{t_{k}}U_{t_{k}}\D^j_rX^{n}_{t_{k}},
$$
where for $u\in \RR^d$,  the  $l^{\rm th}$ column of the $d\times d$ matrix  $(H^b_{t_{k_1}}u)$
is given by 
$$
\Bigl(H^b_{t_{k}}u \Bigr)_{\cdot l}:=\Bigl(\int_0^1
\frac{\partial\nabla b }{\partial x_l}(\theta X^n_{t_{k}} +(1-\theta)X^{mn}_{t_{k}})d\theta\Bigr)u.
$$
Then, we write
\begin{eqnarray*}\label{eq:split}
  \nabla b(X^{mn}_{{_{\eta_{{mn}}(s)} } })\D^j_rX^{mn}_{{_{\eta_{{mn}}(s)} } }- \nabla b(X^{mn}_{{_{\eta_{{n}}(s)} } })\D^j_rX^{mn}_{{_{\eta_{{n}}(s)} } } &=& 
(  \nabla b(X^{mn}_{{_{\eta_{{mn}}(s)} } })-  \nabla b(X^{mn}_{{_{\eta_{{n}}(s)} } })   )\D^j_rX^{mn}_{{_{\eta_{{n}}(s)} } }  \\ &+&
 \nabla b(X^{mn}_{{_{\eta_{{mn}}(s)} } })(\D^j_rX^{mn}_{{_{\eta_{{mn}}(s)} } }-\D^j_rX^{mn}_{{_{\eta_{{n}}(s)} } } )
\end{eqnarray*}
For the first term,  by It\^o's formula and the integration by parts formula, 

\begin{align*}
&\int^{t_{k+1}}_{t_{k}}(  \nabla b(X^{mn}_{{_{\eta_{{mn}}(s)} } })-  \nabla b(X^{mn}_{{_{\eta_{{n}}(s)} } })   )\D^j_rX^{mn}_{{_{\eta_{{n}}(s)} } }ds
\\&=\int^{t_{k+1}}_{t_{k}}\gamma(s)\left(\Bigl(\nabla^2 b(X^{mn}_s)b(X^{mn}_{\eta_{mn}(s)})+\frac{1}{2} \nabla[\Delta b(X^{mn}_s)]\Bigr)\D^j_rX^{mn}_{{_{\eta_{{n}}(s)} } } ds+
\nabla^2 b(X^{mn}_s)\D^j_rX^{mn}_{{_{\eta_{{n}}(s)} } }dW_s\right)
\end{align*}
where $\gamma(s)=(\hat{\eta}_n(s)-\hat{\eta}_{mn}(s))$ and for  $u\in \RR^d$, the $l^{\rm th}$ columns of the $d\times d$ matrices  $(\nabla^2 b(X^n_t)u)$ and $\Bigl(\nabla[\Delta b(X^n_t)] \Bigr)$
are given by
$$
\Bigl(\nabla^2 b(X^n_t)u\Bigr)_{\cdot l}:=\frac{\partial\nabla b(X^n_t)}{\partial x_l}
u\mbox{ and }\Bigl(\nabla[\Delta b(X^n_t)] \Bigr)_{\cdot l}:=\frac{\partial\Delta b(X^n_t)}{\partial x_l},\;\;\; 1\leq l\leq d.
$$

Concerning the second term we use \eqref{dermal} to  write 
$$
\D^j_rX^{mn}_{{_{\eta_{{mn}}(s)} } }-\D^j_rX^{mn}_{{_{\eta_{{n}}(s)} } }  =
\int_{ {\eta_{{n}}(s)} }^{{\eta_{{mn}}(s)}  }\nabla b(X^{mn}_{\eta_{mn}(u) })\D^j_rX^{mn}_{\eta_{mn}(u)}du+
1_{\{  {\eta_{{n}}(s)}  < r\leq {\eta_{{mn}}(s)} \}}\mathbf e_j.
$$
For $k\in\{\lceil \frac{rn}{T}\rceil,\hdots,n-1\}$,  remarking that $ \int^{t_{k+1}}_{t_{k}}\nabla b(X^{mn}_{{_{\eta_{{mn}}(s)} } })1_{\{  {\eta_{{n}}(s)} <r\leq {\eta_{{mn}}(s)} \}}\mathbf e_jds=0 $ and using Fubini's theorem, we  get
\begin{equation*}
 \int^{t_{k+1}}_{t_{k}} \nabla b(X^{mn}_{{_{\eta_{{mn}}(s)} } })(\D^j_rX^{mn}_{{_{\eta_{{mn}}(s)} } }-\D^j_rX^{mn}_{{_{\eta_{{n}}(s)} } })ds=
  \int^{t_{k+1}}_{t_{k}} \left(\int_{\hat{\eta}_{mn}(s) }^{\hat{\eta}_n(s) } \nabla b(X^{mn}_{{_{\eta_{{mn}}(u)} } })du\right) \nabla b(X^{mn}_{{_{\eta_{{mn}}(s)} } })
\D^j_rX^{mn}_{{_{\eta_{{mn}}(s)} } }ds.
\end{equation*}
Hence, 
\begin{align*}
& V^{(r,j)}_{t_{k+1}}-V^{(r,j)}_{t_{k}}=\int^{ t_{k+1}}_{t_{k}}\gamma(s)\nabla^2 b(X^{mn}_s)\D^j_rX^{mn}_{{_{\eta_{{n}}(s)} } }dW_s+\frac{T}{n} H^b_{t_{k}}U_{t_{k}}\D^j_rX^{n}_{t_{k}}+\int^{t_{k+1}}_{t_{k}}\gamma(s)G^{b,mn}_sds
\\
&\mbox{ with }G^{b,mn}_s:=\left(\nabla^2 b(X^{mn}_s)b(X^{mn}_{\eta_{mn}(s)})+\frac{1}{2} \nabla[\Delta b(X^{mn}_s)]\right)
\D^j_rX^{mn}_{{_{\eta_{{n}}(s)} } }
\\ 
&\phantom{\mbox{ with }G^{b,mn}_s:=}+ \left(\frac{ 1_{\{\gamma(s)>0\}} }{\gamma(s)}\int_{\hat{\eta}_{mn}(s) }^{\hat{\eta}_n(s) } \nabla b(X^{mn}_{{_{\eta_{{mn}}(u)} } })du\right)\nabla b(X^{mn}_{{_{\eta_{{mn}}(s)} } })
\D^j_rX^{mn}_{{_{\eta_{{mn}}(s)} } }.
\end{align*}
Choosing $\tilde{V}^{(r,j)}_{t_{k+1}}-\tilde V^{(r,j)}_{t_{k}}$ equal to the sum of the second and third terms in the above expression of $V^{(r,j)}_{t_{k+1}}-V^{(r,j)}_{t_{k}}$ and applying \eqref{DU:res1}, we conclude that for $k\in\{\lceil \frac{rn}{T}\rceil,\hdots,n\}$,
\begin{align} 
 \D^j_r  U_{t_{k}}&=\D^j_r  U^{(0)}_{t_{k}}+
\D^j_r  U^{(1)}_{t_{k}}+ \D^j_r  U^{(2)}_{t_{k}}+ \D^j_r  U^{(3)}_{t_{k}}\mbox{ with}\label{decdju}\\
\D^j_r  U^{(0)}_{t_{k}}&:=\mathcal B_{k-1}^{{\lceil \frac{rn
}{T}\rceil } }\D^j_rU_{t_{{\lceil \frac{rn
}{T}\rceil }}}\notag\\
 \D^j_r  U^{(1)}_{t_{k}}&:= \frac{T}{n}\sum_{l={\lceil \frac{rn
}{T}\rceil }}^{k-1} \mathcal B_{k-1}^{l+1}H^b_{t_{l}}U_{t_{l}}\D^j_rX^{n}_{t_{l}}\notag\\
\D^j_r  U^{(2)}_{t_{k}}&:=\sum_{l={\lceil \frac{rn
}{T}\rceil }}^{k-1} \mathcal B_{k-1}^{l+1}\int_{t_l}^{t_{l+1}}\gamma(s)G^{b,mn}_sds\notag\\
\D^j_r  U^{(3)}_{t_{k}}&:=  \sum_{l={\lceil \frac{rn
}{T}\rceil }+1}^{k-1} \mathcal B_{k-1}^{l+1}(B_{l}-I_d)\int_{ {\hat\eta_{{n}}(r)}   }^{t_{l}}\gamma(s)\nabla^2 b(X^{mn}_s)\D^j_rX^{mn}_{{_{\eta_{{n}}(s)} } }dW_s\notag\\
&\phantom{\sum_{l={\lceil \frac{rn
}{T}\rceil }}^{k-1} \mathcal B_{k-1}^{l+1}(B_{l}-I_d)\int_{ {\hat\eta_{{n}}(r)}   }^{t_{l}}\gamma(s)\nabla^2 b(X^{mn}_s)dW_s }
 +\int_{ {\hat\eta_{{n}}(r)}   }^{t_{k}}\gamma(s)\nabla^2 b(X^{mn}_s)\D^j_rX^{mn}_{{_{\eta_{{n}}(s)} } } dW_s.\notag
\end{align}
Combining Assumption ${\mathbf (\mathbf R\mathbf 2)}$, standard linear algebra arguments and Lemma \ref{boundMal}, we easily prove the following result.
\begin{lemma}\label{bbinv} One has  $\forall k\in\{1,\hdots,n-1\}$,
\begin{equation*}\label{ub_B}
\|B_{k} -I_d\|\leq \frac{T[\dot b]_\infty}{n},\;\;\|B_{k}\|\leq 1+\frac{T[\dot b]_\infty}{n},\quad \forall l\in\{0,\hdots,k\},\;\|\mathcal B_k^{l+1}\|\leq e^{[\dot b]_\infty (t_{k}-t_l)}
\end{equation*}
and $\forall s\in [{\hat\eta_{{n}}(r)} , T]$
\begin{equation*}\label{ub:G}
|G^{b,mn}_s|\leq  e^{[\dot b]_\infty (\eta_{{mn}}(s)-r)}\left( \sqrt{d} [\ddot b]_\infty |b(X^{mn}_{\eta_{mn}(s)})|+\frac{1}{2} \|\nabla[\Delta b(X^{mn}_s)]\|
+[\dot b]^2_\infty\right).
\end{equation*}
\end{lemma}
Combining this result, \eqref{djuinit} and  Lemma \ref{boundMal}, we easily get an upper bound for  $\D^j_r  U^{(0)}_{t_{k}}$
\begin{align}
\label{term_ini}
|\D^j_r  U^{(0)}_{t_{k}}| 
\leq \|\mathcal B_{k-1}^{{\lceil \frac{rn}{T}\rceil } } \| |\D^j_rU_{t_{{\lceil \frac{rn}{T}\rceil }}} |
\leq e^{[\dot b]_\infty (t_{k}-\hat\eta_n(r))}\times [\dot b]_\infty e^{[\dot b]_\infty(\hat\eta_n(r)-r)}\gamma(r)=[\dot b]_\infty e^{[\dot b]_\infty(t_k-r)}\gamma(r).
\end{align}

In the same way as in the previous section where we introduced $U^\star_T=\max_{0\le k\le n}|U_{t_{k}}|$, we also define for all $r\in[0,T]$, the process $\D_r^jU^\star_T=\max_{0\le k\le n}|\D_r^jU^{(0)}_{t_{k}}+\D_r^jU^{(1)}_{t_{k}}+\D_r^jU^{(2)}_{t_{k}}+\D_r^jU^{(3)}_{t_{k}}|$. According to our assumption  ${\mathbf (\mathbf R\mathbf 2)}$ and by 
Lemmas \ref{boundMal} and \ref{bbinv}, we have
\begin{align}
\D_r^jU^\star_T &\leq \D_r^jU^\star_{(0)} +\D_r^jU^\star_{(1)} + \D_r^jU^\star_{(2)} + \D_r^jU^\star_{(3)}, \mbox{ with }\notag\\
\D_r^jU^\star_{(0)} &\le[\dot b]_\infty e^{[\dot b]_\infty(T-r)}\frac{(m-1)T}{mn}\label{majodu0}\\ \D_r^jU^\star_{(1)}&:=\frac{T}{n} e^{[\dot b]_\infty (T-r)}\sum_{k={\lceil \frac{rn
}{T}\rceil }}^{n-1} e^{[\dot b]_\infty (T- t_{k+1})} \|H^b_{t_k}U_{t_k } \|\notag  \\
\D_r^jU^\star_{(2)}&=\int_{ {\hat\eta_{{n}}(r)}   }^{T} e^{[\dot b]_\infty (T-\hat\eta_{{n}}(s) )}|\gamma(s)G^{b,mn}_s|ds\notag\\
 \D^j_r  U_{(3)}^{\star}&:= \frac{T}{n} \sum_{k={\lceil \frac{rn
}{T}\rceil }+1}^{n-1}    {[\dot b]_\infty e^{[\dot b]_\infty (T-t_{k+1})}} |\int_{ {\hat\eta_{{n}}(r)}   }^{t_{k}}\gamma(s)\nabla^2 b(X^{mn}_s)\D^j_rX^{mn}_{{_{\eta_{{n}}(s)} } }dW_s|\notag\\
&+\max_{{\lceil \frac{rn
}{T}\rceil +1} \le k\le n}|\int_{ {\hat\eta_{{n}}(r)}   }^{t_{k}}\gamma(s)\nabla^2 b(X^{mn}_s)\D^j_rX^{mn}_{{_{\eta_{{n}}(s)} } }dW_s|. \label{DU2}
\end{align}

By Jensen's inequality we get 
$$
|\D_r^jU^\star_T|^2 \leq  
\frac{|\D_r^jU^\star_{(1)}|^2}{(1-q)} + \frac{|\D_r^jU^\star_{(0)}+\D_r^jU^\star_{(2)}|^2}{q (1-\bar q)} 
+ \frac{|\D_r^jU^\star_{(3)}|^2}{ q \bar q}
$$
where  $q,  \bar q\in(0,1)$ are two parameters to be optimized later. Then, by H\"older and Jensen inequalities, we deduce that for all $r\in[0,T]$,
we have
\begin{align}\label{Laplace_DU_2}
\EE\left[  \exp\left\{ \rho |\D^j_rU_T^{\star}|^2\right\}\right]&\leq
\EE^{ 1-q}\left[  \exp\left\{    \frac{ \rho }{  (1-q)^2}|\D_r^jU^\star_{(1)}|^2 \right\}\right]\notag\\&\times\EE^{q(1-\bar q)}\left[  \exp\left\{\frac{ \rho }{q^2(1  -\bar  q)^2}|\D_r^jU^\star_{(0)}+\D_r^jU^\star_{(2)}|^2 \right\}\right]\EE^{q \bar q }\left[  \exp\left\{\frac{ \rho }{q^2\bar q^2}|\D_r^jU^\star_{(3)}|^2 \right\}\right],
\end{align}
$\bullet\;${\bf First term}. 
In this part, we  focus on the contribution of the term $\D_r^jU^\star_{(1)}$. Note that under assumption ${\mathbf (\mathbf R\mathbf 2)}$
we have,  for all 
$k\in\{\lceil\frac{rn
}{T}\rceil , \cdots, n-1\}$
$$
\|H^b_{t_k}U_{t_k}  \| \leq \left(\sum_{j=1}^d 
\int_0^1
\left\|\frac{\partial\nabla b }{\partial x_j} (\theta X^n_{{t_k }} +(1-\theta)X^{mn}_{t_k}) \right\|^2 d\theta \,|U_{t_k}|^2\right)^{\frac{1}{2}}\leq \sqrt{d}[\ddot b]_\infty |U_{t_k}|
$$
Therefore,
\begin{align}\label{DU:1}
\D_r^jU^\star_{(1)}&\leq \frac{T \sqrt{d}[\ddot b]_\infty }{n} U^\star_T e^{[\dot b]_\infty (T-r)}\sum_{k={\lceil \frac{rn
}{T}\rceil }}^{n-1} e^{[\dot b]_\infty (T- t_{k+1})} \notag\\
&\leq { \sqrt{d}[\ddot b]_\infty } U^\star_Te^{[\dot b]_\infty (T-r)}\int_{\hat\eta_{n(r)}}^{T}e^{[\dot b]_\infty (T- t)}dt \notag\\
& \leq e^{[\dot b]_\infty (T-r)} \frac{\sqrt{d}[\ddot b]_\infty }{[\dot b]_\infty }(e^{[\dot b]_\infty (T- r)} -1)U^\star_T
:=e^{[\dot b]_\infty (T-r)}\Phi_1(r)U^\star_T.
\end{align}
Hence,  it follows that for all  $r\in[0,T]$, we have
$$
\EE\left[  \exp\left\{    \frac{ \rho }{(1-q)^2}|\D_r^jU^\star_{(1)}|^2 \right\}\right]\leq 
\EE\left[  \exp\left\{    \frac{ \rho e^{2[\dot b]_\infty (T-r)} \Phi^2_1(r)}{ (1-q)^2} |U^\star_{T}|^2\right\}\right].
$$
As assumption ${\mathbf (\mathbf R\mathbf 1)}$ is satisfied under  ${\mathbf (\mathbf R\mathbf 2)}$, then Theorem \ref{laplace1} applies and 

\begin{align}\label{DUB:1}
\forall \rho\in[0,&\frac{\rho_{\eqref{rho:lap1}} (1-q)^2n^2}{ e^{2[\dot b]_\infty (T-r)} \Phi^2_1(r)} ], \notag\\ 
&\EE^{ (1-q)}\left[  \exp\left\{    \frac{ \rho }{ (1-q)^2}|\D_r^jU^\star_{(1)}|^2 \right\}\right]\leq \exp\left
\{\rho \frac{e^{2[\dot b]_\infty (T-r)} \Phi^2_1(r)C_{\eqref{majufin}}(0)}{ (1-q)}\times\frac{(m-1)T^2}{ mn^2}\right\}.
\end{align}
\\
$\bullet\;${\bf Second term}. 
 By the second assertion of Lemma \ref{bbinv}, we have
$$
D_r^jU^\star_{(2)}\leq  e^{[\dot b]_\infty (T-r)}\int_{ {\hat\eta_{{n}}(r)}   }^{T}\gamma(t) 
 \left( \sqrt{d} [\ddot b]_\infty |b(X^{mn}_{\eta_{mn}(t)})|+\frac{1}{2} \|\nabla[\Delta b(X^{mn}_t)]\|
+[\dot b]^2_\infty\right)   dt.
$$
Moreover, thanks to Lemma \ref{momXn}, we get
\begin{align}
D_r^jU^\star_{(2)}&\leq e^{[\dot b]_\infty (T-r)} \notag\\ &\times \int_{ {\hat\eta_{{n}}(r)}   }^{T}\gamma(t)  \left( e^{[\dot b]_\infty t}(\sqrt{d} [\dot b]_\infty[\ddot b]_\infty +a_{\nabla\Delta b}) \left(\sup_{s\le t}|W_s|+\frac{|b(x_0)|}{ [\dot b]_\infty}\right)+
 a_{\nabla\Delta b}+[\dot b]^2_\infty\right)   dt \notag\\
&\leq C_{\eqref{majDu3}}e^{[\dot b]_\infty (T-r)}\int_{ {\hat\eta_{{n}}(r)}   }^{T}\gamma(t) e^{[\dot b]_\infty t}
 \left(  \sup_{s\le t}|W_s|+\frac{|b(x_0)|}{ [\dot b]_\infty}+1\right)   dt,\label{majDu3}
 \end{align}
  where 
$C_{\eqref{majDu3}}=
(\sqrt{d} [\dot b]_\infty[\ddot b]_\infty\vee[\dot b]^2_\infty +a_{\nabla\Delta b} 
) $.
With \eqref{majodu0}, we deduce that
\begin{align*}
\D_r^jU^\star_{(0)}+ D_r^jU^\star_{(2)} &\le 
  C_{\eqref{majDu3}}e^{[\dot b]_\infty (T-r)}\times \int_{ {\hat\eta_{{n}}(r)}   }^{T}\gamma(t)e^{[\dot b]_\infty t}\bigg(
  \sup_{s\le t}|W_s| \\&+\frac{\sqrt{t}}{\int_{ {\hat\eta_{{n}}(r)}   }^T\gamma(s)e^{[\dot b]_\infty s}\sqrt{s}ds}
  \bigg( \frac{|b(x_0)|+[\dot b]_\infty}{[\dot b]_\infty}\int_{ {\hat\eta_{{n}}(r)}   }^T\gamma(s)e^{[\dot b]_\infty s}ds+\frac{[\dot b]_\infty (m-1)T}{C_{\eqref{majDu3}}mn}\bigg)\bigg)dt.
\end{align*}
Therefore, using 
 Jensen's inequality for the probability density $p(t)=\frac{\sqrt{t}\gamma(t)e^{[\dot b]_\infty t}}{\int_{ {\hat\eta_{{n}}(r)}   }^T\sqrt{s}\gamma(s)e^{[\dot b]_\infty s}ds}$ on $[{ {\hat\eta_{{n}}(r)}   },T]$, we obtain that
\begin{align*}
   \EE&\left[  \exp\left\{\frac{ \rho }{q^2(1-\bar q)^2}|\D_r^jU^\star_{(0)}+\D_r^jU^\star_{(2)}|^2 \right\}\right]\notag\\&\le\int_{ {\hat\eta_{{n}}(r)}   }^T\EE\bigg[\exp\bigg\{\frac{\rho C^2_\eqref{majDu3}e^{2[\dot b]_\infty (T-r)}(\int_{ {\hat\eta_{{n}}(r)}   }^T\gamma(s)e^{[\dot b]_\infty s}\sqrt{s}ds)^2}{q^2(1-\bar q)^2} \times\bigg(   \frac{1}{\sqrt{t}} \sup_{s\le t}|W_s| +\delta\bigg)^2\bigg\}\bigg]p(t)dt.
\end{align*}
where $\delta=\frac{1}{\int_{ {\hat\eta_{{n}}(r)}   }^T\gamma(s)e^{[\dot b]_\infty s}\sqrt{s}ds}
  \bigg( \frac{|b(x_0)|+[\dot b]_\infty}{[\dot b]_\infty}\int_{ {\hat\eta_{{n}}(r)}   }^T\gamma(s)e^{[\dot b]_\infty s}ds+\frac{[\dot b]_\infty (m-1)T}{C_{\eqref{majDu3}}mn}\bigg)$ and by the scaling property for the Brownian motion $W$,  we may replace $\frac{1}{\sqrt{t}} \sup_{s\le t}|W_s|$ by $\sup_{s\le 1}|W_s|$.

In the same way as we did for the second term of Section \ref{secu}, we  use  that 
$
   \left(\sup_{s\le 1}|W_s|+\delta\right)^2\le \sum_{i=1}^d \left(\sup_{s\le 1}|W^i_s|+\delta\right)^2-(d-1)\delta^2,
$
to get 
\begin{align*}
   \EE&\left[  \exp\left\{\frac{ \rho }{q^2(1-\bar q)^2}|\D_r^jU^\star_{(0)}+\D_r^jU^\star_{(2)}|^2 \right\}\right]\\ &\le\exp\bigg\{-\frac{\rho(d-1) C^2_\eqref{majDu3}e^{2[\dot b]_\infty (T-r)}}{q^2(1-\bar q)^2}\times\bigg( \frac{|b(x_0)|+[\dot b]_\infty}{[\dot b]_\infty}\int_{ {\hat\eta_{{n}}(r)}   }^T\gamma(s)e^{[\dot b]_\infty s}ds+\frac{[\dot b]_\infty (m-1)T}{C_{\eqref{majDu3}}mn}\bigg)^2\bigg\}\notag\\&
   \times \EE^d\bigg[\exp\bigg\{\frac{\rho C^2_\eqref{majDu3}e^{2[\dot b]_\infty (T-r)}(\int_{ {\hat\eta_{{n}}(r)}   }^T\gamma(s)e^{[\dot b]_\infty s}\sqrt{s}ds)^2}{q^2(1-\bar q)^2}\times\bigg(   \sup_{s\le 1}|W^1_s|+\delta\bigg)^2\bigg\}\bigg].
\end{align*}
Applying the first assertion in Lemma \ref{Laplace_G_square} with $|H|=1$ and using that since $s\mapsto \sqrt{s}e^{[\dot b]_\infty s}$ and $s\mapsto e^{[\dot b]_\infty s}$ are non-decreasing, $\int_{ {\hat\eta_{{n}}(r)}   }^T\gamma(s)e^{[\dot b]_\infty s}\sqrt{s}ds\leq \frac{T(m-1)}{2mn}\int_{r}^Te^{[\dot b]_\infty s}\sqrt{s}ds$ and $\int_{ {\hat\eta_{{n}}(r)}   }^T\gamma(s)e^{[\dot b]_\infty s}ds\leq \frac{T(m-1)}{2mn}\times\frac{e^{[\dot b]_\infty T}-e^{[\dot b]_\infty r}}{[\dot b]_\infty}$, we deduce that if
$$
\rho\in[0, \frac{\rho_{\eqref{DUB:3}} q^2(1-\bar q)^2n^2}{e^{2[\dot b]_\infty (T-r)} \Phi_2^2(r)}] \mbox{ with }
\rho_{\eqref{DUB:3}}:=  \frac{m^2}{2C^2_{\eqref{majDu3}} T^2(m-1)^2} \mbox{ and }   \Phi_2(r):=\int_{r}^Te^{[\dot b]_\infty s}\sqrt{s}ds,
$$
then
\begin{align}\label{DUB:3}
  &\EE^{{q(1-\bar q)} }\left[  \exp\left\{\frac{ \rho}{ q^2(1-\bar q)^2}| \D_r^jU^\star_{(0)}+D_r^jU^\star_{(2)}|^2 \right\}\right]\le\exp\left\{\rho\frac{  e^{2[\dot b]_\infty (T-r)} \phi_2(r,[\dot b]_\infty)}{q(1-\bar q)}\times\frac{  T^2(m-1)}{mn^2}\right\}
\end{align}
where  
\begin{align*}
\phi_2(r,x)&=\frac{(m-1)}{m}\bigg( (3d+1)\left(C_{\eqref{majDu3}}\frac{|b(x_0)|+[\dot b]_\infty}{2[\dot b]^2_\infty}(e^{[\dot b]_\infty T}-e^{[\dot b]_\infty r})+x\right)^2\\&+
\frac{4dC_{\eqref{majDu3}}\Phi_2(r)}{\sqrt{\pi}}\left(C_{\eqref{majDu3}}\frac{|b(x_0)|+[\dot b]_\infty}{2[\dot b]^2_\infty}(e^{[\dot b]_\infty T}-e^{[\dot b]_\infty r})+x\right)+d\ln 2C^2_{\eqref{majDu3}}\Phi^2_2(r)\bigg).
\end{align*}
\\
$\bullet\;${\bf Third term}. 
Let us introduce the quantities  
\begin{align}
   C&=\sqrt{\frac{e^{2[\dot b]_\infty(T-\hat\eta_{n}(r))}-1}{2[\dot b]_\infty}}+\sum_{k={\lceil \frac{rn
}{T}\rceil }+1}^{n-1}\frac{T[\dot b]_\infty  e^{[\dot b]_\infty(T-t_{k+1})}\sqrt{(e^{2[\dot b]_\infty(t_k-\hat\eta_{n}(r))}-1)/2[\dot b]_\infty}}{n}\notag
   \\ &\le {e^{[\dot b]_\infty(T-r)}}\left(\sqrt{\frac{1-e^{-2[\dot b]_\infty(T-r)}}{2[\dot b]_\infty}}+\sqrt{\frac{[\dot b]_\infty}{2}}\int_r^T\sqrt{1-e^{-2[\dot b]_\infty(t-r)}}dt \right):=e^{[\dot b]_\infty(T-r)}\Phi_3(r),\label{cteD:maj}
\end{align}
 $p_n=\frac{1}{C}\sqrt{\frac{e^{2[\dot b]_\infty(T-\hat\eta_{n}(r))}-1}{2[\dot b]_\infty}}$ and $p_{k}=\frac{T[\dot b]_\infty  e^{[\dot b]_\infty(T-t_{k+1})}\sqrt{(e^{2[\dot b]_\infty(t_k-\hat\eta_{n}(r))}-1)/2[\dot b]_\infty}}{Cn}$ for ${\lceil \frac{rn
}{T}\rceil }+1\le k\le n-1$. 
Notice that $\sum_{k=\lceil \frac{rn
}{T}\rceil +1}^{n}p_{k}=1$ so that we have defined a probability measure. Therefore, by \eqref{DU2} we have
\begin{align*}
\D^j_r  U_{(3)}^{\star}&= \sum_{k={\lceil \frac{rn
}{T}\rceil }+1}^{n-1}    p_k\frac{ C\sqrt{2[\dot b]_\infty}}{\sqrt{e^{2[\dot b]_\infty(t_k-\hat\eta_{n}(r))}-1}}\left |\int_{ {\hat\eta_{{n}}(r)}   }^{t_{k}}\gamma(s)\nabla^2 b(X^{mn}_s)\D^j_rX^{mn}_{{_{\eta_{{n}}(s)} } }dW_s\right |\notag\\
&+p_n\frac{ C\sqrt{2[\dot b]_\infty}}{\sqrt{e^{2[\dot b]_\infty(T-\hat\eta_{n}(r))}-1}}\max_{{\lceil \frac{rn
}{T}\rceil +1} \le k\le n} \left|\int_{ {\hat\eta_{{n}}(r)}   }^{t_{k}}\gamma(s)\nabla^2 b(X^{mn}_s)\D^j_rX^{mn}_{{_{\eta_{{n}}(s)} } }dW_s\right|
\end{align*}
and so
$$
\D^j_r  U_{(3)}^{\star}\le \sum_{k= {\lceil \frac{rn
}{T}\rceil }+1}^{n}p_k\frac{ C\sqrt{2[\dot b]_\infty}}{\sqrt{e^{2[\dot b]_\infty(t_k-\hat\eta_{n}(r))}-1}}
\max_{{\lceil \frac{rn}{T}\rceil }+1 \le l\le k}\left |\int_{ {\hat\eta_{{n}}(r)}  }^{t_{l}}\gamma(t)\nabla^2 b(X^{mn}_s)\D^j_rX^{mn}_{{_{\eta_{{n}}(s)} } }dW_s\right|.
$$
Now, applying Jensen's inequality to the convex function $\R\ni x\mapsto \exp\left\{\frac{\rho x^2}{q^2\bar q^2}\right\}$, we deduce that  for all 
$r\in[0,T]$, we have
\begin{align*}
  & \EE \left[  \exp\left\{\frac{ \rho }{ q^2\bar q^2}|\D_r^jU^\star_{(3)}|^2 \right\}\right]
\le\sum_{k= {\lceil \frac{rn
}{T}\rceil }+1}^{n} p_k\\ &\times \EE\bigg[ \exp\left\{
\frac{ \rho e^{2[\dot b]_\infty(T-r)}\Phi^2_3(r)2[\dot b]_\infty}{q^2\bar q^2 (e^{2[\dot b]_\infty(t_k-\hat\eta_{n}(r))}-1) }
\max_{{\lceil \frac{rn
}{T}\rceil }+1 \le l\le k}\left |\int_{ {\hat\eta_{{n}}(r)}  }^{t_{l}}\gamma(t)\nabla^2 b(X^{mn}_s)\D^j_rX^{mn}_{{_{\eta_{{n}}(s)} } }dW_s\right|^2
\right\}\bigg].
\end{align*}
Now, using Assumption ${\mathbf (\mathbf R\mathbf 2)}$, Lemma \ref{boundMal}, than the periodicity of the function $\gamma$ with period $t_1=T/n$,
 we get
\begin{align*}
\int_ {\hat\eta_{{n}}(r)}^{t_{k}}&\gamma(t)^2\Tr\Bigl[\nabla ^2b(X^{mn}_t) \D^j_rX^{mn}_{{_{\eta_{{n}}(t)} } }
(\nabla^2 b(X^{mn}_t)\D^j_rX^{mn}_{{_{\eta_{{n}}(t)} } })^{\tp}\Bigr]dt\\
&\le \int_ {\hat\eta_{{n}}(r)}^{t_{k}}\gamma(t)^2\sum_{j=1}^d  \left \|{\frac{\partial\nabla b(X^{mn}_t)}{\partial x_j} }\right \|^2|{\D^j_rX^{mn}_{{_{\eta_{{n}}(t)} } }}|^2dt\le d [\ddot b]^2_\infty\int_ {\hat\eta_{{n}}(r)}^{t_{k}}\gamma(t)^2e^{2[\dot b]_\infty(\eta_{{n}}(t)-\hat\eta_{{n}}(r)) }dt\\
&=d [\ddot b]^2_\infty T^2\frac{(2m-1)(m-1)}{6(mn)^2}\frac{T}{n}\sum_{l=\lceil\frac{nr}{T}\rceil}^{k-1}e^{2[\dot b]_\infty(t_l-\hat\eta_{{n}}(r)) }\\&\le d [\ddot b]^2_\infty T^2\frac{(2m-1)(m-1)}{6(mn)^2}\times\frac{e^{2[\dot b]_\infty(t_k-\hat\eta_{{n}}(r)) }-1}{2[\dot b]_\infty}.
\end{align*}
Then, by the second assertion in Lemma \ref{Laplace_G_square},
\begin{align}
 &\forall \rho\in [0,\frac{\rho_{\eqref{DUB:2}}q^2\bar q^2n^2}{e^{2[\dot b]_\infty(T-r)}\Phi_3^2(r)}], \mbox{ with } \rho_{\eqref{DUB:2}}:=\frac{3m^2}{4 T^2 d[\ddot b]^2_\infty (2m-1)(m-1) }\notag\\
  &  \EE^{q\bar q}\left[  \exp\left\{\frac{ \rho }{ q^2\bar q^2 }|D_r^jU^\star_{(3)}|^2 \right\}\right]\le
 \exp\left\{\rho \frac{e^{2[\dot b]_\infty(T-r)}\Phi_3^2(r)C_\eqref{DUB:2}}{q\bar q}\times\frac{(m-1)T^2}{mn^2 }\right\},
\label{DUB:2}
\end{align}
where $C_{\eqref{DUB:2}}:=\frac{2}{3}\ln(2) d[\ddot b]^2_\infty \frac{2m-1}{m} $.\\\\
$\bullet\,$ {\bf Conclusion}.
In order to have the same constraint on $\rho$ for the three Laplace transforms, we choose 
\begin{equation*}
  q  =\frac{\frac{\sqrt{\rho_{\eqref{rho:lap1}}}}{\Phi_1(r)}({\frac{\sqrt{\rho_{\eqref{DUB:3}}}}{\Phi_2(r)}+\frac{\sqrt{\rho_{\eqref{DUB:2}}}}{\Phi_3(r)})}}{ \frac{\sqrt{\rho_{\eqref{rho:lap1}}}}{\Phi_1(r)}{\frac{\sqrt{\rho_{\eqref{DUB:3}}}}{\Phi_2(r)}+\frac{\sqrt{\rho_{\eqref{rho:lap1}}}}{\Phi_1(r)}\frac{\sqrt{\rho_{\eqref{DUB:2}}}}{\Phi_3(r)}}+ {\frac{\sqrt{\rho_{\eqref{DUB:3}}}}{\Phi_2(r)}\frac{\sqrt{\rho_{\eqref{DUB:2}}}}{\Phi_3(r)}}}
   \mbox{ and }
 \bar q= \frac{\frac{\sqrt{\rho_{\eqref{DUB:3}}}}{\Phi_2(r)}}{\frac{\sqrt{\rho_{\eqref{DUB:3}}}}{\Phi_2(r)}+\frac{\sqrt{\rho_{\eqref{DUB:2}}}}{\Phi_3(r)}}
\end{equation*}
Then, by combining \eqref{Laplace_DU_2}, \eqref{DUB:1}, \eqref{DUB:3}  and 
\eqref{DUB:2}, we deduce that  if 
$$
0\le \rho\leq \frac{\frac{{\rho_{\eqref{rho:lap1}}}}{\Phi^2_1(r)}\frac{{\rho_{\eqref{DUB:3}}}}{\Phi^2_2(r)}\frac{{\rho_{\eqref{DUB:2}}}}{\Phi^2_3(r)}}
{e^{2[\dot b]_\infty(T-r)}\left(\frac{\sqrt{\rho_{\eqref{rho:lap1}}}}{\Phi_1(r)}{\frac{\sqrt{\rho_{\eqref{DUB:3}}}}{\Phi_2(r)}+\frac{\sqrt{\rho_{\eqref{rho:lap1}}}}{\Phi_1(r)}\frac{\sqrt{\rho_{\eqref{DUB:2}}}}{\Phi_3(r)}}+ {\frac{\sqrt{\rho_{\eqref{DUB:3}}}}{\Phi_2(r)}\frac{\sqrt{\rho_{\eqref{DUB:2}}}}{\Phi_3(r)}}\right)^2}\times n^2$$
then as
\begin{multline*}
\Phi(r,[\dot b]_\infty)= \bigg({ \frac{\sqrt{\rho_{\eqref{rho:lap1}}}}{\Phi_1(r)}{\frac{\sqrt{\rho_{\eqref{DUB:3}}}}{\Phi_2(r)}+\frac{\sqrt{\rho_{\eqref{rho:lap1}}}}{\Phi_1(r)}\frac{\sqrt{\rho_{\eqref{DUB:2}}}}{\Phi_3(r)}}+ {\frac{\sqrt{\rho_{\eqref{DUB:3}}}}{\Phi_2(r)}\frac{\sqrt{\rho_{\eqref{DUB:2}}}}{\Phi_3(r)}}}\bigg)\\\times\bigg(\frac{\Phi^2_3(r)C_\eqref{DUB:2} }{\frac{\sqrt{\rho_{\eqref{rho:lap1}}}}{\Phi_1(r)} \frac{\sqrt{\rho_{\eqref{DUB:3}}}}{\Phi_2(r)}}+\frac{\phi_2(r,[\dot b]_\infty)}{\frac{\sqrt{\rho_{\eqref{rho:lap1}}}}{\Phi_1(r)} \frac{\sqrt{\rho_{\eqref{DUB:2}}}}{\Phi_3(r)}}+
\frac{\Phi^2_1(r)C_{\eqref{majufin}}(0)}{{\frac{\sqrt{\rho_{\eqref{DUB:3}}}}{\Phi_2(r)}\frac{\sqrt{\rho_{\eqref{DUB:2}}}}{\Phi_3(r)}}}\bigg)
\end{multline*}
we get
\begin{align*}
\EE&\left[  \exp\left\{ \rho|\D^j_rU_T^{\star}|^2\right\}\right]\leq
 \exp\left\{\rho e^{2[\dot b]_\infty(T-r)}\Phi(r,[\dot b]_\infty)\frac{(m-1)T^2}{mn^2}\right\}.
\end{align*}
\end{proof}
\section{Proof of Theorem \ref{thm:Orlicz}}\label{sec:6}
For $\lambda \in\RR$, by independence, \begin{align}\label{decomplap}
   &\EE\left[\exp\left(\lambda[\hat Q-\mathbb Ef(X^{m^L}_T)]\right)\right]=\prod_{\ell=0}^L\EE\left[\exp(\lambda \hat Q_{\ell})\right]\mbox{ where}\\
&\EE\left[\exp(\lambda \hat Q_{\ell})\right]=\left( \EE\left[\exp\left\{\frac{\lambda}{N_{\ell}}\left(f(X^{m^{\ell}}_T)-f(X^{m^{\ell-1}}_T)-
\EE[f(X^{m^{\ell}}_T)-f(X^{m^{\ell-1}}_T)]\right)\right\}\right]\right)^{N_{\ell}}\notag\\
\mbox{ and }&\EE\left[\exp(\lambda \hat Q_{0})\right]\leq \exp\left\{ \frac{\lambda^2 T[\dot f]_\infty^2}{2N_0}\right\},\notag\end{align}
where we used the Gaussian concentration bound  \eqref{Co:gau} to get the last the inequality.
For $\ell\in\{1,\hdots,L\}$, we  set $n\in\NN^*$ and define 
$$\Upsilon:=f(X^{mn}_T)-f(X^{n}_T)-
\EE[f(X^{mn}_T)-f(X^{n}_T)].$$
For $\tilde{\lambda}\in\RR$, we want to obtain an estimation of $\EE\left[\exp(\tilde{\lambda}\Upsilon)\right]$  of the form $\exp\left\{C\tilde{\lambda}^2\frac{(m-1)T^2}{mn^2}\right\}$  where $C$ is an explicit constant and $\frac{(m-1)T^2}{mn^2}$ is the order of the variance of the centered random variable $\Upsilon$ according to Proposition \ref{properfort}. To do so, we assume that $f\in \mathscr C^1_b(\RR^d,\RR)$ is Lipschitz continuous with constant $[\dot f]_{\infty}$ and such that $\nabla f$ is also Lipschitz with constant $[\dot f]_{\rm lip}$.
By Clarck's Ocone formula we have 
$$ f(X^{mn}_T)-f(X^{n}_T)-
\EE[f(X^{mn}_T)-f(X^{n}_T)]=
\int_0^T\EE\left[K_{r}|\mathcal F_r\right]\cdot dW_r,$$
where, for $j\in\{1,\dots,d\}$, the $j^{{th}}$-component of the $d$-dimensional vector $K_{r}$ is given by 
$K_{r,j}:=\D^j_rf(X^{mn}_T)-\D^j_rf(X^{n}_T)$. 
For $p\in(0,1)$, we use H\"older's inequality to get
\begin{multline*}
\EE\left[\exp(\tilde \lambda \Upsilon)\right]\leq 
\EE^{p}\left[\exp\left\{\frac{\tilde \lambda}{p}
\int_0^T\EE\left[K_{r}|\mathcal F_r\right]\cdot dW_r -\frac{\tilde\lambda^2}{2p^2}\int_0^T |\EE\left[K_{r}|\mathcal F_r\right]|^2dr
\right\}\right]\\\times \EE^{1-p}\left[\exp\left\{\frac{\tilde \lambda^2}{2p(1-p)}\int_0^T |\EE\left[K_{r}|\mathcal F_r\right]|^2dr
\right\}\right]. 
\end{multline*}
Now, by  the Malliavin chain rule we have

\begin{equation}\label{chainrule}
 K_{r}=\D_r X^{mn}_{T}\nabla f(X^{mn}_T ) -\D_r X^{n}_{T} \nabla f(X^{n}_T ) ,
\end{equation}
where $\D_r X^{n}_{T}=(\D^i_r X^{n}_{T,j})_{1\leq i,j\leq d}\in \RR^{d\times d}$.

According to Lemma \ref{boundMal} and under our assumption on the boundedness of $\nabla f$, we easily check that  $\sup_{r\in[0,T]}|K_{r}|^2\leq 4d e^{2T [\dot b]_{\infty}} [\dot f]^2_{\infty} $. Therefore, the process
$$\left(\exp\left\{\frac{\tilde \lambda}{p}
\int_0^t\EE\left[K_{r}|\mathcal F_r\right]\cdot dW_r -\frac{\tilde\lambda^2}{2p^2}\int_0^t |\EE\left[K_{r}|\mathcal F_r\right]|^2dr
\right\}\right)_{0\leq t\leq T}$$ is a martingale, which together with the choice $p=1/2$ which minimizes $\frac{1}{p(1-p)}$ leads us to  
\begin{equation*}
\EE\left[\exp(\tilde \lambda \Upsilon)\right]\leq \EE^{1/2}\left[\exp\left\{2\tilde \lambda^2
 \int_0^T|\EE\left[K_{r}|\mathcal F_r\right]|^2dr\right\}\right]. 
\end{equation*}
Applying Jensen's inequality twice, and now denoting by $p$ a measurable positive function such that $\int_0^T p(r)dr=1$, we obtain that
\begin{align*}
   \EE&\left[  \exp\left\{ 2\tilde \lambda^2\int_0^{T} |\EE\left[K_{r}|\mathcal F_r\right]|^2dr \right\}\right]\leq\int_0^T \EE\left[  \exp\left\{ \frac{2\tilde \lambda^2T}{p(r)}|K_{r}|^2 \right\}\right]p(r)dr.\end{align*}
and deduce that 
\begin{equation}
   \EE\left[\exp(\tilde \lambda \Upsilon)\right]\leq \left(\int_0^T \EE\left[  \exp\left\{ \frac{2\tilde \lambda^2T}{p(r)}|K_{r}|^2 \right\}\right]p(r)dr\right)^{1/2}.\label{lapgam}
\end{equation}
We now want to estimate the moment generating function of $|K_r|^2$. Setting $$U_T:=X^{mn}_T-X^{n}_T,$$ remarking that $\| \D_r U_{T}\|^2\leq \Tr\left[\D_r U_{T}(\D_r U_{T})\tp\right]=\sum_{j=1}^d|\D_r^j U_T|^2 $ and, by Lemma \ref{boundMal}, $\|D_rX^{mn}_T\|\le \sqrt{d}e^{[\dot b]_{\infty}(t-r)}$, we obtain that
\begin{align}\label{majka}
   |K_r|&\le \|D_rX^{mn}_T\||\nabla f(X^{mn}_T )-\nabla f(X^{n}_T )|+\|\D_r X^{mn}_{T}- \D_r X^{n}_{T}\||\nabla f(X^{n}_T )|\notag \\&\le \sqrt{d}[\dot f]_{\rm{lip}}e^{[\dot b]_{\infty}(t-r)}|U_T|+[\dot f]_{\infty}\left(\sum_{j=1}^d|\D_r^j U_T|^2 \right)^{1/2}.
\end{align}
A careful look at the proof of this theorem shows that, in the decomposition \eqref{decdju} of $\D^j_rU_T$, the sum $\D_r^j U^{(1-3)}_T:=\D_r^j U^{(1)}_T+\D_r^j U^{(2)}_T+\D_r^j U^{(3)}_T$  goes to $0$ as $r\to T$ whereas $\D_r^jU^{(0)}_{T}$ does not. This indicates that it is not optimal to combine $\D_r^jU^{(0)}_{T}$ with $\D_r^j U^{(2)}_T$ as in this proof. We also notice that under the same constraint on $\rho$ as in the theorem, 
\begin{align}\label{Th_Laplace_DU}
\EE&\left[  \exp\left\{ \rho |\D_r^j U^{(1-3)}_T|^2\right\}\right]\leq
 \exp\left\{\rho e^{2[\dot b]_\infty(T-r)}\Phi(r,0)\frac{(m-1)T^2}{mn^2}\right\}.
\end{align}
Since $U_T$ does not depend on $r$, it should be better to combine $\D^jU^{(0)}_{T}$ with it by replacing \eqref{majka} by the estimation 
\begin{align*}
   |K_r|&\le \sqrt{d}[\dot f]_{\rm{lip}}e^{[\dot b]_{\infty}(T-r)}|U_T|+[\dot f]_{\infty}\left(\sum_{j=1}^d|\D_r^j U^{(0)}_T|^2 \right)^{1/2}+[\dot f]_{\infty}\left(\sum_{j=1}^d|\D_r^j U^{(1-3)}_T|^2 \right)^{1/2}\\
&\le \sqrt{d}[\dot f]_{\rm{lip}}e^{[\dot b]_{\infty}(T-r)}\left(\frac{(m-1)T^2x}{2mn}+|U_T|\right) +[\dot f]_{\infty}\left(\sum_{j=1}^d|\D_r^j U^{(1-3)}_T|^2 \right)^{1/2}\\&\mbox{ where } x=\frac{2[\dot b]_{\infty}[\dot f]_{\infty}}{T[\dot f]_{\rm{lip}}}
\end{align*}
which takes \eqref{term_ini} into account. 
One deduces that for $\kappa(r)\in (0,1)$, 
$$|K_{r}|^2
\leq \frac{1}{\kappa(r)}d[\dot f]_{\rm{lip}}^2e^{2[\dot b]_{\infty}(T-r)}\left(\frac{(m-1)T^2x}{2mn}+|U_T|\right)^2 +\frac{[\dot f]_{\infty}^2}{(1-\kappa(r))}\sum_{j=1}^d|\D_r^j U^{(1-3)}_T|^2.$$
Combining \eqref{lapgam}, H\"older's inequality and the convexity
of the exponential function which ensures that the exponential of the mean of $d$ terms is not  greater than the mean of the exponentials, we deduce that
\begin{multline}\label{split_error}
 \EE^2\left[\exp(\tilde \lambda \Upsilon)\right]\leq\int_0^T\EE^{\kappa(r)}\left[\exp\left\{\frac{2d\tilde\lambda^2 T[\dot f]^2_{_{\rm{lip}}} e^{2[\dot b]_{\infty}(T-r) }  }{p(r)\kappa^2(r)}
\left(\frac{(m-1)T^2x}{2mn}+|U_T|\right)^2 \right\}\right]\\
\times \left(\frac{1}{d}\sum_{j=1}^d\EE\left[\exp\left\{\frac{2d\tilde\lambda^2T[\dot f]^2_{\infty}}{p(r)(1-\kappa(r))^2}
|\D^j_r U^{(1-3)}_{T}|^2 \right\}\right]\right)^{1-\kappa(r)}p(r)dr.
\end{multline}
We now choose $\kappa(r)=\frac{[\dot f]_{_{\rm{lip}}}\sqrt{\hat\rho(r)}}{[\dot f]_{\infty}\sqrt{\rho_{\eqref{rho:lap1}}}+[\dot f]_{_{\rm{lip}}}\sqrt{\hat\rho(r)}}$ to obtain the same constraint on $\tilde \lambda$ for the two expectations at time $r$ and then $p(r)\propto \frac{e^{2[\dot b]_{\infty}(T-r) }}{\hat\rho(r)}\left([\dot f]_{\infty}\sqrt{\rho_{\eqref{rho:lap1}}}+[\dot f]_{_{\rm{lip}}}\sqrt{\hat\rho(r)}\right)^2$ to ensure that this common constraint does not depend on $r$. Notice that since the functions $\Phi_i$ are continuous on $[0,T]$, positive on $[0,T)$ and such that $\Phi_1(r)={\mathcal O}(T-r)$, $\Phi_2(r)={\mathcal O}(T-r)$ and $\Phi_3(r)={\mathcal O}(\sqrt{T-r})$ so that $\hat{\rho}(r)={\mathcal O}((T-r)^{-1})$ as $r\to T-$, the function $p$ is bounded and therefore integrable on $[0,T]$.
We conclude that if
\begin{align*}
   \tilde \lambda^2\le \frac{\rho_{\eqref{rho:lap1}}n^2}{2dT\int_0^T\frac{e^{2[\dot b]_{\infty}(T-t) }}{\hat\rho(t)}\left([\dot f]_{\infty}\sqrt{\rho_{\eqref{rho:lap1}}}+[\dot f]_{_{\rm{lip}}}\sqrt{\hat\rho(t)}\right)^2dt},
\end{align*}
\begin{align}
  \EE^2\left[\exp(\tilde \lambda \Upsilon)\right]&\leq \int_0^T\exp\bigg\{\frac{2d\tilde \lambda^2 T([\dot f]_{_{\rm{lip}}}\sqrt{\hat\rho(r)}C_{\eqref{majufin}}({2[\dot b]_{\infty}[\dot f]_{\infty}}/{T[\dot f]_{\rm{lip}}})+[\dot f]_{\infty}\hat\rho(r)\Phi(r,0)/\sqrt{\rho_{\eqref{rho:lap1}}})}{[\dot f]_{\infty}\sqrt{\rho_{\eqref{rho:lap1}}}+[\dot f]_{_{\rm{lip}}}\sqrt{\hat\rho(r)}}\notag\\&\phantom{\leq \int_0^T\exp\bigg\{}\times\int_0^T\frac{e^{2[\dot b]_{\infty}(T-t) }}{\hat\rho(t)}\left([\dot f]_{\infty}\sqrt{\rho_{\eqref{rho:lap1}}}+[\dot f]_{_{\rm{lip}}}\sqrt{\hat\rho(t)}\right)^2dt\bigg\}p(r)dr\notag\\
&\le \exp\left\{\frac{2\tilde \lambda^2[\dot f]^2_{\infty} C_{\eqref{majofin}} (m-1)T^2}{mn^2}\right\},\label{majofin}
\end{align}
where 
\begin{align*}
   C_{\eqref{majofin}}=&d T\int_0^T\frac{e^{2[\dot b]_{\infty}(T-t) }}{\hat\rho(t)}\left(\sqrt{\rho_{\eqref{rho:lap1}}}+\frac{[\dot f]_{_{\rm{lip}}}}{[\dot f]_{\infty}}\sqrt{\hat\rho(t)}\right)^2dt\\&\times\sup_{r\in [0,T)}\frac{[\dot f]_{_{\rm{lip}}}\sqrt{\hat\rho(r)}C_{\eqref{majufin}}({2[\dot b]_{\infty}[\dot f]_{\infty}}/{T[\dot f]_{\rm{lip}}})+[\dot f]_{\infty}\hat\rho(r)\Phi(r,0)/\sqrt{\rho_{\eqref{rho:lap1}}}}{[\dot f]_{\infty}\sqrt{\rho_{\eqref{rho:lap1}}}+[\dot f]_{_{\rm{lip}}}\sqrt{\hat\rho(r)}}
\end{align*}
is finite since, as $r\to T-$, $\phi_2(r,0)={\mathcal O}((T-r)^{2})$ and $\Phi(r,0)={\mathcal O}((T-r))$. We complete the proof using \eqref{decomplap}. 
\appendix
\section{Proofs of the technical lemmas}
\begin{lemma}\label{Laplace_G_square}
   Let $(H_t)_{t\leq T}$ be an adapted $\R^d$-valued process and $|H|:=\left|\left(\int_0^T|H_t|^2dt\right)^{1/2}\right|_\infty$. Then $\forall \delta\ge 0$, $\forall \mu\in\left[0,\frac{1}{4|H|^2}\right)$, 
\begin{align*}
  &\;\E\left(e^{\mu(\delta+\sup_{t\in[0,T]}|\int_0^t H_s.dW_s|)^2}\right)\leq\exp\left\{\frac{2\mu\delta^2}{1-4\mu|H|^2}\right\}\left(\frac{8\mu\delta|H|}{\sqrt{2\pi}(1-4\mu|H|^2)}+\frac{1}{\sqrt{1-4\mu|H|^2}}\right),\\
\end{align*}
where the right-hand side is smaller than $\exp\left\{4\mu\left(\delta^2+\frac{2}{\sqrt{\pi}}\delta|H|+|H|^2\ln 2\right)\right\}$ when moreover $\mu \in\left[0,\frac{1}{8|H|^2}\right]$.

For $(M(t))_{t\leq T}$ an adapted $\R^{d\times d}$-valued process and $|M|:=\left|\left(\int_0^T\Tr(M(t)M^*(t))dt\right)^{1/2}\right|_\infty$,
\begin{align*}
\forall \mu\in\left[0,\frac{1}{4|M|^2}\right),\;  &\E\left(e^{\mu\sup_{t\in[0,T]}|\int_0^t M_sdW_s|^2}\right)\leq\frac{1}{\sqrt{1-4\mu|M|^2}},\\
&\mbox{ and }\forall \mu\in\left[0,\frac{1}{8|M|^2}\right],\;\E\left(e^{\mu\sup_{t\in[0,T]}|\int_0^t M_sdW_s|^2}\right)\leq e^{4\mu|M|^2\ln 2} \end{align*}
\end{lemma}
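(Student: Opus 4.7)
The strategy for both parts is to reduce the claim to an explicit Gaussian moment computation via Dambis--Dubins--Schwarz (DDS) and the reflection principle.

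For the real-valued integral $X_t=\int_0^tH_s\cdot dW_s$, I would first invoke DDS: after a standard enlargement of the probability space (with a localization to justify Novikov's condition, using that $\int_0^T|H_s|^2ds\le |H|^2$ a.s.), one has $X_t=B_{\langle X\rangle_t}$ for a real Brownian motion $B$, so that $\sup_{t\le T}|X_t|\le \sup_{s\le |H|^2}|B_s|$. The crucial algebraic inequality is that for $\delta,a,b\ge 0$,
\begin{equation*}
(\delta+\max(a,b))^2\le (\delta+a)^2+(\delta+b)^2,
\end{equation*}
which applied with $a=\sup_{s\le \tau}B_s$ and $b=\sup_{s\le \tau}(-B_s)$ (writing $\tau=|H|^2$), combined with Cauchy--Schwarz and the symmetry $B\stackrel{d}{=}-B$, reduces the target expectation to $\EE[\exp(2\mu(\delta+\sup_{s\le \tau}B_s)^2)]$. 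By the reflection principle, $\sup_{s\le \tau}B_s\stackrel{d}{=}|B_\tau|=\sqrt{\tau}|G|$ with $G\sim \mathcal N(0,1)$, so everything collapses to a one-dimensional Gaussian integral.

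The remaining step is to evaluate
\begin{equation*}
\frac{2}{\sqrt{2\pi\tau}}\int_0^\infty\exp\bigl(2\mu(\delta+b)^2-\tfrac{b^2}{2\tau}\bigr)\,db
\end{equation*}
by completing the square in $b$ with $\alpha:=\tfrac{1}{2\tau}-2\mu$. The condition $\alpha>0$ is exactly $\mu<1/(4|H|^2)$, and the exponent rewrites as $-\alpha(b-2\mu\delta/\alpha)^2+2\mu\delta^2/(1-4\mu|H|^2)$, producing the exponential factor in the claim. The residual integral naturally splits into a symmetric piece $\sqrt{\pi/(4\alpha)}$, which generates the $1/\sqrt{1-4\mu|H|^2}$ term, and a boundary contribution $\int_0^{c\sqrt{\alpha}}e^{-v^2}dv\le c\sqrt{\alpha}$ (with $c=2\mu\delta/\alpha$), which generates the $8\mu\delta|H|/(\sqrt{2\pi}(1-4\mu|H|^2))$ term. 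For the simplified bound on $[0,1/(8|H|^2)]$, I would use $1-4\mu|H|^2\ge 1/2$ together with the elementary $(1-x)^{-1/2}\le e^{x\ln 2}$ on $[0,1/2]$, and the AM--GM split of the cross term $\delta|H|$, collapsing the estimate into $\exp\{4\mu(\delta^2+\tfrac{2}{\sqrt{\pi}}\delta|H|+|H|^2\ln 2)\}$.

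The matrix-valued case follows the same reflection-plus-Gaussian-integration scheme but starting from the It\^o decomposition $|X_t|^2=\int_0^t\Tr(M_sM_s^\top)\,ds+N_t$ with $N_t:=2\int_0^tX_s^\top M_s\,dW_s$ real-valued. The drift is bounded by $|M|^2$ a.s. and $\langle N\rangle_T\le 4|M|^2\sup_{s\le T}|X_s|^2$, so DDS applied to $N$ combined with a bootstrap on $\sup|X|^2$ to absorb this circular dependence produces the claimed bound $1/\sqrt{1-4\mu|M|^2}$ on the whole range $\mu<1/(4|M|^2)$; the simplified $e^{4\mu|M|^2\ln 2}$ on $[0,1/(8|M|^2)]$ follows by the same elementary simplification as in the real case.

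The main obstacle is keeping the radius $1/(4|M|^2)$ sharp in the matrix case: a naive coordinate-wise decomposition followed by H\"older introduces a spurious factor of $d$, while a direct $L^2$-Doob bound on the submartingale $\exp(\mu|X_t|^2)$ yields only the weaker power $1/4$ in the denominator. The bootstrap therefore has to be set up so that the random bound $\langle N\rangle_T\le 4|M|^2\sup|X|^2$ is exploited inside a supermartingale argument (for instance using the integrating factor $(1-2\mu A(t))^{-1/2}\exp(\mu|X_t|^2/(1-2\mu A(t)))$ with $A(t)=|M|^2-\int_0^t\Tr(M_sM_s^\top)ds$, which I can verify to be a supermartingale by It\^o's formula since the excess drift in the diffusion term exactly matches the loss in the $t$-derivative of the prefactor) rather than substituted only at the end, so that the radius of convergence is not degraded.
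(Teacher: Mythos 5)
Your scalar-case argument is exactly the paper's proof: Dambis--Dubins--Schwarz to reduce to $\sup_{s\le|H|^2}|\beta_s|$, the elementary inequality $(\delta+\max(a,b))^2\le(\delta+a)^2+(\delta+b)^2$, Cauchy--Schwarz, symmetry, the reflection principle $\sup_{s\le\tau}\beta_s\stackrel{d}{=}|\beta_\tau|$, and then the same completion of the square. (A small imprecision: Novikov's condition is irrelevant here; what DDS requires is only that the bracket is a.s.\ finite, which follows from $\langle X\rangle_T\le|H|^2$.)

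For the matrix case your proposed route has a genuine gap. The integrating-factor process
$Y_t=(1-2\mu A(t))^{-1/2}\exp\{\mu|X_t|^2/(1-2\mu A(t))\}$
with $A(t)=|M|^2-\int_0^t\Tr(M_sM_s^\top)ds$ is indeed a supermartingale (your drift computation goes through because $|M_t^\top X_t|^2\le\Tr(M_tM_t^\top)|X_t|^2$), and since $1-2\mu A(t)\le1$ it yields $\EE[e^{\mu|X_T|^2}]\le\EE[Y_T]\le Y_0=(1-2\mu|M|^2)^{-1/2}$. But the lemma asks for a bound on $\EE[e^{\mu\sup_{t\le T}|X_t|^2}]$, and a nonnegative supermartingale bound on $\EE[Y_T]$ does not control $\EE[\sup_t Y_t]$: the maximal inequality $\PP(\sup_t Y_t\ge a)\le Y_0/a$ gives only a Markov tail, and integrating it produces a logarithmic divergence, not a finite constant. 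The ``bootstrap'' you sketch via DDS on $N_t$ and the bound $\langle N\rangle_T\le 4|M|^2\sup_s|X_s|^2$ is genuinely circular, and the supermartingale does not break that circle for the supremum. This is precisely the gap your sketch stops short of closing, so the $1/\sqrt{1-4\mu|M|^2}$ bound on the running supremum is not established.

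The paper's route for the matrix case is different and avoids this issue: bound $\sup_t|X_t|^2\le\sum_{i=1}^d\sup_t(\int_0^tM_i(s)\cdot dW_s)^2$ row by row, then apply Jensen's inequality for the probability weights $|M_i|^2/|M|^2$ so that each exponent becomes $\frac{\mu|M|^2}{|M_i|^2}\sup_t(\int_0^tM_i\cdot dW_s)^2$, and finally invoke the scalar case (with $\delta=0$) for each row, which gives $1/\sqrt{1-4\mu|M|^2}$ uniformly in $i$. Because the DDS reduction in the scalar lemma already handles the supremum in $t$, the sup is dealt with at the outset and no bootstrap is needed. Note also that your concern about ``a naive coordinate-wise decomposition followed by H\"older introduces a spurious factor of $d$'' does not apply to the paper's argument: the weights are chosen row by row to keep the argument of the scalar lemma exactly at $\mu|M|^2$, so the radius is not degraded.
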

\begin{remark}
Let $h(t)=\||H_t|\|_\infty$ for $t\in[0,T]$. Then
$$\forall \mu\in\left[0,\frac{1}{2\|h\|_2^2}\right],\;\sup_{t\in[0,T]}\E\left(e^{\mu(\int_0^t H_s.dW_s)^2}\right)\leq\frac{1}{\sqrt{1-2\mu\|h\|_2^2}},$$
where $\|h\|_2^2=\int_0^Th^2(t)dt\geq |H|^2$. 

When $\|h\|_2^2<+\infty$, this is a consequence of the convexity of $x\mapsto e^{\mu x^2}$ and Jensen's inequality. Indeed introducing (on a possibly enlarged probability space) $(\beta_t)_{t\in[0,T]}$ a one-dimensional Brownian motion independent from ${\mathcal F}^{W,H}=\sigma((H_t,W_t),t\in[0,T])$, we obtain that for $t\in [0,T]$, $\int_0^tH_s.dW_s+\int_0^t\sqrt{h^2(s)-|H_s|^2}d\beta_s$ is a centered Gaussian random variable with variance equal to $\int_0^th^2(s)ds$ such that $$\E\left(\int_0^tH_s.dW_s+\int_0^t\sqrt{h^2(s)-|H_s|^2}d\beta_s\bigg|{\mathcal F}^{W,H}\right)=\int_0^tH_s.dW_s.$$
\end{remark}
\begin{proof}
The argument is based on the Dambins-Dubins-Schwarz (see e.g. \cite{RevYor})  theorem which ensures the existence of a one-dimensional standard Brownian motion $(\beta_t)_{t\geq 0}$ such that $\forall t\geq 0$, $\int_0^tH_s.dW_s=\beta_{\int_0^t|H_s|^2ds}$. Hence
\begin{align*}
   \E\left(e^{\mu(\delta+\sup_{t\in[0,T]}|\int_0^t H_s.dW_s|)^2}\right)&=\E\left(e^{\mu\left(\delta+\sup_{t\in[0,T]}|\beta_{\int_0^t|H_s|^2ds}|\right)^2}\right)\leq \E\left(e^{\mu(\delta+\sup_{s\in[0,|H|^2]}|\beta_{s}|)^2}\right)
\\&\leq \E\left(e^{\mu(\delta+\sup_{s\in[0,|H|^2]}\beta_{s})^2}e^{\mu(\delta-\inf_{s\in[0,|H|^2]}\beta_{s})^2}\right)\\&
\leq \E^{1/2}\left(e^{2\mu(\delta+\sup_{s\in[0,|H|^2]}\beta_{s})^2}\right)\E^{1/2}\left(e^{2\mu(\delta-\inf_{s\in[0,|H|^2]}\beta_{s})^2}\right)\\&=\E\left(e^{2\mu(\delta+|\beta_{|H|^2}|)^2}\right)
\end{align*}
where we have used that $\sup_{s\in[0,|H|^2]}\beta_s$ and $-\inf_{s\in[0,|H|^2]}\beta_s
$ have the same law as $|\beta_{|H|^2}|$ for the last equality. Now, using the change of variables $y=x-4\mu\delta|H|^2/(1-4\mu|H|)$ for the second equality, we obtain that
\begin{align*}
   \E&\left(e^{2\mu(\delta+|\beta_{|H|^2}|)^2}\right)=2\int_0^\infty \exp\left\{2\mu(\delta+x)^2-\frac{x^2}{2|H|^2}\right\}\frac{dx}{|H|\sqrt{2\pi}}\\&=2\exp\left\{\frac{2\mu\delta^2}{1-4\mu|H|^2}\right\}\int_{-4\mu\delta|H|^2/(1-4\mu|H|^2)}^\infty \exp\left\{-\frac{(1-4\mu|H|^2)y^2}{2|H|^2}\right\}\frac{dy}{|H|\sqrt{2\pi}}\\
&\le \exp\left\{\frac{2\mu\delta^2}{1-4\mu|H|^2}\right\}\left(\frac{8\mu\delta|H|}{\sqrt{2\pi}(1-4\mu|H|^2)}+\int_{-\infty}^\infty \exp\left\{-\frac{(1-4\mu|H|^2)y^2}{2|H|^2}\right\}\frac{dy}{|H|\sqrt{2\pi}}\right)\\
&=\exp\left\{\frac{2\mu\delta^2}{1-4\mu|H|^2}\right\}\left(\frac{8\mu\delta|H|}{\sqrt{2\pi}(1-4\mu|H|^2)}+\frac{1}{\sqrt{1-4\mu|H|^2}}\right).
\end{align*}
The concavity of the logarithm ensures that $\forall x\in[0,\frac 12],\;\ln(1-x)\ge -2x\ln 2$ so that $\frac{1}{\sqrt{1-x}}=e^{-\frac 12\ln(1-x)}\le e^{x\ln 2}$.
Therefore when $\mu \in\left[0,\frac{1}{8|H|^2}\right]$, 
\begin{align*}
\exp\left\{\frac{2\mu\delta^2}{1-4\mu|H|^2}\right\}\left(\frac{8\mu\delta|H|}{\sqrt{2\pi}(1-4\mu|H|^2)}+\frac{1}{\sqrt{1-4\mu|H|^2}}\right)&\le e^{4\mu\delta^2}\left(\frac{8\mu\delta|H|}{\sqrt{\pi}}+1\right)\frac{1}{\sqrt{1-4\mu|H|^2}}\\
&\le e^{4\mu\delta^2}e^{\frac{8\mu\delta|H|}{\sqrt{\pi}}}e^{4\mu |H|^2\ln 2}.
\end{align*}

  Let now for $i\in\{1,\hdots,d\}$, $M_i(t)$ denote the $i$-th line of the matrix $M(t)$ and $|M_i|:=\left\|\left(\int_0^T\sum_{j=1}^d M_{ij}^2(t)dt\right)^{1/2}\right\|_\infty$. For $\mu< \frac{1}{4|M|^2}$, we have
\begin{align*}\E\left(e^{\mu\sup_{t\in[0,T]}|\int_0^t M(s)dW_s|^2}\right)&=\E\left(e^{\mu\sup_{t\in[0,T]}\sum_{i=1}^d(\int_0^t M_i(s)dW_s)^2}\right)\leq \E\left(e^{\mu\sum_{i=1}^d\sup_{t\in[0,T]}(\int_0^t M_i(s)dW_s)^2}\right)\\
&\leq \E\left(\sum_{i=1}^d \frac{|M_i|^2}{|M|^2}e^{\frac{\mu|M|^2}{|M_i|^2}\sup_{t\in[0,T]}(\int_0^t M_i(s)dW_s)^2}\right)\leq \frac{1}{\sqrt{1-4\mu|M|^2}}
\end{align*}
where we used Jensen's inequality for the third inequality and the first statement of the Lemma for the fourth.
\end{proof}
%

\end{document}